\newtheorem{thm}{Theorem}[section]
\newtheorem{lem}[thm]{Lemma}
\newtheorem{prop}[thm]{Proposition}
\newtheorem{cor}[thm]{Corollary}
\numberwithin{thm}{section}
\numberwithin{equation}{section}
\theoremstyle{definition}
\newtheorem{dfn}[thm]{Definition}
\def\n{\noindent}
\def\ov{\overline}
\def\hb{\hfil\break}
\def\n{\noindent}
\def\ov{\overline}
\newcommand{\A}{\mathbb{A}}
\newcommand{\R}{\mathbb{R}}
\newcommand{\C}{\mathbb{C}}
\newcommand{\Q}{\mathbb{Q}}
\newcommand{\F}{\mathbb{F}}
\newcommand{\Z}{\mathbb{Z}}
\newcommand{\T}{\mathbb{T}}
\newcommand{\p}{\mathfrak{p}}
\newcommand{\q}{w} 
\newcommand{\h}{\mathfrak{t}}
\newcommand{\m}{\mathfrak{m}}
\newcommand{\aA}{\mathcal{A}}
\newcommand{\EE}{\mathcal{E}}
\newcommand{\FF}{\mathcal{F}}
\newcommand{\HH}{\mathcal{H}}
\newcommand{\OO}{\mathcal{O}}
\newcommand{\ZZ}{\mathcal{Z}}
\newcommand{\wt}{\widetilde}
\newcommand{\one}{\mathbf{1}}
\newcommand{\GG}{\mathbf{G}}
\newcommand{\ab}{\operatorname{ab}}
\newcommand{\ad}{\operatorname{ad}}
\newcommand{\Ad}{\operatorname{Ad}}
\newcommand{\qs}{\operatorname{qs}}
\newcommand{\sph}{\operatorname{sph}}
\newcommand{\old}{\operatorname{old}}
\newcommand{\new}{\operatorname{new}}
\newcommand{\diag}{\operatorname{diag}}
\newcommand{\antidiag}{\operatorname{antidiag}}
\newcommand{\Ind}{\operatorname{Ind}}
\newcommand{\Ann}{\operatorname{Ann}}
\newcommand{\rk}{\operatorname{rk}}
\newcommand{\End}{\operatorname{End}}
\newcommand{\Lie}{\operatorname{Lie}}
\newcommand{\Hom}{\operatorname{Hom}}
\newcommand{\into}{\hookrightarrow}
\newcommand{\onto}{\twoheadrightarrow}
\newcommand{\la}{\langle}
\newcommand{\ra}{\rangle}
\newcommand{\upi}{\pmb {\pi}}
\newcommand{\pr}{\operatorname{pr}}
\newcommand{\md}{\operatorname{mod}}
\newcommand{\der}{\operatorname{der}}
\newcommand{\im}{\operatorname{im}}
\newcommand{\Gal}{\operatorname{Gal}}
\newcommand{\GL}{\operatorname{GL}}
\newcommand{\GSp}{\operatorname{GSp}}
\newcommand{\GSpin}{\operatorname{GSpin}}
\newcommand{\Sp}{\operatorname{Sp}}
\newcommand{\SO}{\operatorname{SO}}
\newcommand{\PGL}{\operatorname{PGL}}
\newcommand{\PGSp}{\operatorname{PGSp}}
\newcommand{\Spec}{\operatorname{Spec}}
\newcommand{\SL}{\operatorname{SL}}
\newcommand{\St}{\operatorname{St}}
\newcommand{\U}{\operatorname{U}}
\newcommand{\rhoi}{\rho_\infty}
\newcommand{\rhoiL}{\rho_{\infty,L}}
\newcommand{\rhos}{\rho_\Sigma}
\newcommand{\rhosL}{\rho_{\Sigma,L}}
\newcommand{\Vs}{V_\Sigma}
\newcommand{\VsL}{V_{\Sigma,L}}
\newcommand{\Vi}{V_\infty}
\newcommand{\ViL}{V_{\infty,L}}
\newcommand{\bs}{\backslash}
\newcommand{\y}{\hspace{6pt}}
\begin{document}

\title{On Level-Raising Congruences}

\author {Yuval Z. Flicker}

\thanks{\n Keywords: Level-Raising, Congruences, Algebraic Modular Forms.\hb
2000 Mathematics Subject Classification: 11F33; 22E55, 11F70, 11F85, 
11F46, 20G25, 22E35.}


\address{\noindent Department of Mathematics, The Ohio State University,
231 W. 18th Ave., Columbus, OH 43210-1174; email: flicker@math.osu.edu}

\date{}

\begin{abstract}
In this paper we rewrite a work of Sorensen to include nontrivial 
types at the infinite places. This work extends results of K.  Ribet 
and R. Taylor on level-raising for algebraic modular forms on
$D^{\times}$, where $D$ is a definite quaternion algebra over a
totally real field $F$. We do this for any automorphic representations
$\pi$ of an arbitrary reductive group $G$ over $F$ which is
compact at infinity. We do not assume $\pi_\infty$ is trivial. 
If $\lambda$ is a finite place of $\bar{\Q}$,
and $w$ is a place where $\pi_w$ is unramified and $\pi_w \equiv
\one$ (mod $\lambda$), then under some mild additional
assumptions (we relax requirements on the relation between $w$ and
$\ell$ which appear in previous works) we prove the existence of a 
$\tilde{\pi} \equiv \pi$ (mod $\lambda$) such that $\tilde{\pi}_w$ 
has more parahoric fixed vectors than $\pi_w$. In the case where 
$G_w$ has semisimple rank one, we sharpen results of Clozel, Bellaiche 
and Graftieaux according to which $\tilde{\pi}_w$ is Steinberg. To 
provide applications of the main theorem we consider two examples over
$F$ of rank greater than one. In the first example we take $G$ to be a 
unitary group in three variables and a split place $w$. In the second 
we take $G$ to be an inner form of GSp(2). In both cases, we obtain
precise satisfiable conditions on a split prime $w$ guaranteeing
the existence of a $\tilde{\pi} \equiv \pi$ (mod $\lambda$) such
that the component $\tilde{\pi}_w$ is generic and Iwahori
spherical. For symplectic $G$, to conclude that $\tilde{\pi}_w$ is
generic, we use computations of R. Schmidt. In particular, if
$\pi$ is of Saito-Kurokawa type, it is congruent to a
$\tilde{\pi}$ which is not of Saito-Kurokawa type.
\end{abstract}

\maketitle 

\section*{Introduction}

This paper stems from the following result of Ribet [R]:

\begin{thm}
Let $f \in S_2(\Gamma_0(N))$ be an eigenform. Let
$\lambda|\ell$ be a finite place of $\bar{\Q}$ with $\ell
\geq 5$ and $f$ not congruent to an Eisenstein series modulo
$\lambda$. Let $q$ be a prime number with $(q,N\ell)=1$ and 
$a_q(f)^2 \equiv (1+q)^2\,\,\, (\md\lambda).$
Then there exists a $q$-new eigenform $\tilde{f} \in
S_2(\Gamma_0(Nq))$ congruent to $f$ mod $\lambda$.
\end{thm}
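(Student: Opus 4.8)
The plan is to follow Ribet's method, recast in the quaternionic / algebraic-modular-forms language used in the body of this paper. First I would attach to $f$ its semisimple mod-$\lambda$ Galois representation $\bar\rho=\bar\rho_{f,\lambda}\colon G_{\Q}\to\GL_2(\bar\F_\ell)$; the hypothesis that $f$ is not Eisenstein mod $\lambda$ (with $\ell\ge 5$) makes $\bar\rho$ absolutely irreducible. As $q\nmid N\ell$, the representation $\bar\rho$ is unramified at $q$ and the characteristic polynomial of $\bar\rho(\Frob_q)$ is $X^2-a_q(f)X+q$ modulo $\lambda$, so the congruence $a_q(f)^2\equiv(1+q)^2$ says precisely that this polynomial factors as $(X-1)(X-q)$ or $(X+1)(X+q)$ mod $\lambda$ --- i.e.\ $\bar\rho(\Frob_q)$ has eigenvalues in the ratio $q$, the ``level-raising condition'' making $\bar\rho$ look like the reduction of a representation that is Steinberg at $q$. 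By Jacquet--Langlands a $q$-new eigenform $\tilde f\in S_2(\Gamma_0(Nq))$, necessarily special at $q$, is the same datum as a Hecke eigenform on $B^\times$, where $B/\Q$ is the definite quaternion algebra ramified exactly at $\{q,\infty\}$, for an Eichler order $R$ of level $N$; this is an algebraic modular form in the sense of this paper, $B^\times$ being compact at infinity. So it suffices to produce an eigenvector $\phi$ in the space $M$ of mod-$\lambda$ functions on $B^\times\backslash\widehat B^\times/\widehat R^\times$, modulo the line of constant (Eisenstein) functions, with $T_p\phi\equiv a_p(f)\phi$ for all $p\nmid Nq$.

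I would then pass to commutative algebra. Let $\T$ be generated by the $T_p$, $p\nmid Nq$, acting on the relevant spaces and on $M$, and let $\m\subset\T$ be the non-Eisenstein maximal ideal attached to $\{a_p(f)\bmod\lambda\}$; the goal is $M_\m\ne0$. The engine is a Hecke-equivariant exact sequence relating $M$ to level-$N$ forms, obtained from the Deligne--Rapoport description of $X_0(Nq)$ in characteristic $q$ (two copies of $X_0(N)_{\F_q}$ crossing at the supersingular points, whose free module is, by Eichler, the Brandt module $\cong M$) together with Grothendieck's monodromy formalism for the character group of $J_0(Nq)$ at $q$; after localizing at the non-Eisenstein $\m$ it takes the shape
\[
0\longrightarrow K_\m\longrightarrow S_2(\Gamma_0(N);\bar\F_\ell)_\m\xrightarrow{\ \delta\ }M_\m\longrightarrow C_\m\longrightarrow 0,
\]
with $K$ and $C$ annihilated by the level-raising operator $T_q^2-(1+q)^2$, while Ihara's lemma pins down $\delta$. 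Chasing this gives $M_\m\ne0\iff T_q^2-(1+q)^2$ is not invertible on $S_2(\Gamma_0(N);\bar\F_\ell)_\m\iff T_q^2-(1+q)^2\in\m\iff a_q(f)^2\equiv(1+q)^2\pmod\lambda$, which is the hypothesis. Given $M_\m\ne0$, a Deligne--Serre lifting argument produces a genuine characteristic-zero Hecke eigenform on $B^\times$ whose eigenvalue system reduces into $\m$; transporting it back through Jacquet--Langlands yields $\tilde f\in S_2(\Gamma_0(Nq))$, congruent to $f$ mod $\lambda$ by construction and $q$-new because its $q$-component, coming from $B^\times$, is Steinberg.

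The main obstacle is the construction and non-Eisenstein analysis of that exact sequence --- proving that the sole obstruction to level-raising at $q$ is the non-vanishing of $T_q^2-(1+q)^2$ at $\m$. This is where Ihara's lemma and the character-group computation live: one needs the toric part of $J_0(Nq)_{/\F_q}$ to have character group the Brandt module, control of the monodromy pairing, and knowledge that the mod-$\ell$ Galois module attached to $\m$ is (up to multiplicity) $\bar\rho$ --- all forcing $\ell\ge 5$ and non-Eisenstein so that the pertinent Hecke modules are Gorenstein/free and Ihara's lemma applies. In the general reductive setting of the paper this geometric input is replaced by a representation-theoretic analysis of admissible $\GL_2(\Q_q)$-modules (Iwahori--Hecke-algebra module theory) together with a freeness statement for the space of algebraic modular forms as a Hecke module; but in every guise, isolating ``$T_q^2-(1+q)^2$'' as exactly the obstruction, and showing its non-invertibility at $\m$ suffices, is the heart of the matter, while the reinterpretation of the hypothesis, the passage through Jacquet--Langlands, and the Deligne--Serre lift are formal.
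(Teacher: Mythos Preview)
Your sketch is a faithful outline of Ribet's original geometric proof, and it is correct. However, it is not the route this paper takes. You work with the quaternion algebra $B$ ramified at $\{q,\infty\}$ and invoke the Deligne--Rapoport picture of $X_0(Nq)_{\F_q}$, the character group of the toric part of $J_0(Nq)$, and the monodromy pairing to build the key exact sequence. The paper instead passes (via Serre [S]) to the quaternion algebra $D$ ramified at $\{\ell,\infty\}$, so that $q$ is a \emph{split} place with $D_q\simeq\GL_2(\Q_q)$; Ribet's theorem then becomes the rank-one special case of Theorem~0.3/0.4 applied at $w=q$. The engine is not arithmetic geometry but Taylor's abstract congruence-module lemma (Lemma~2.1) fed by a purely combinatorial Ihara lemma (Lemma~7.2) on the finite double-coset space $G(F)\backslash G(\A^\infty)/K$, followed by Deligne--Serre lifting. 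Your approach yields the classical connection to the bad reduction of modular curves; the paper's approach avoids all geometry and, precisely because of that, generalizes uniformly to any reductive $G$ compact at infinity and to parahorics beyond the Iwahori, which is the point of the paper.
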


Two eigenforms $f$ and $\tilde{f}$ are said to be {\it congruent modulo}
$\lambda$ if their Hecke eigenvalues are algebraic integers congruent 
for almost all primes, that is, if $a_p(f)\equiv a_p(\tilde{f})$ (mod $\lambda$)
for almost all $p$. The proof of this theorem can be reduced, via
the correspondence from an inner form to GL(2) (see [F] for a simple 
proof),  to the corresponding statement for $D^{\times}$ where $D$ 
is a definite quaternion algebra over $\Q$.

A goal of this paper is to prove that an automorphic form of
Saito-Kurokawa type is congruent to an automorphic form which is
$not$ of Saito-Kurokawa type. By functoriality ([F1]) the statement
can be reduced to that for an inner form $G$ of $\PGSp(2)/F$
such that $G(\R)$ is compact. Indeed, the set of packets of
automorphic representations of $G(\A)$ can be identified with
a subset of the set of such objects on $\PGSp(2,\A)$, where
almost all local components are the same.
By a form on $G\simeq \SO(5)$ of
Saito-Kurokawa type we mean the lift of $\one\times\rho$
from the endoscopic group $\PGL(2,\A)\times\PGL(2,\A)$ to 
$\PGSp(2,\A)$, where $\rho$ is cuspidal and $\one$ is
trivial on $\PGL(2,\A)$. It is nontempered at almost all places.
We achieve this goal in Theorem 0.6, proven in Section 12.

We apply ideas and methods of R. Taylor [T1] and [T2]. The
level-raising part of Taylor's proof carries over to the following more
general setup. Let $F$ denote a totally real number field with ring
$\A=F_{\infty}\times \A^{\infty}$ of ad\`eles. We denote the set of real 
places of $F$ by $\infty$. Let $G$ be a connected reductive 
$F$-group such that $G_{\infty}^1:=G_{\infty}\cap G(\A)^1$ (see Sect. 3)
is compact and the derived group $G^{\der}$ is simple and simply 
connected; here $G_\infty=G(F_\infty)$. When $F \neq \Q$, this just 
means that $G_{\infty}$ is compact, see Prop. 3.1 below. However, 
when $F=\Q$ and the $\Q$- and $\R$-ranks of $G^{\ab}$ are equal, 
it suffices that $G_{\infty}^{\der}$ be compact. Here
$G^{\ab}$ denotes the biggest quotient group of $G$ which is a torus, 
thus $G^{\ab}=G/G^{\der}$. There are plenty of such groups $G$. In fact, 
any split simple $F$-group not of type $A_n$ ($n \geq 2$), $D_{2n+1}$ 
or $E_6$ has infinitely many inner forms which are compact at
infinity (and quasi-split at all but at most one finite place).

Fix a Haar measure $\mu=\otimes \mu_v$ on
$G(\A^{\infty})$. We state the results using the
following notion of congruence. As $K$ varies over the compact
open subgroups of $G(\A^{\infty})$, the centers
$Z(\HH_{K,\Z})$ of the Hecke algebras form an inverse
system. To an automorphic representation $\pi$ of $G(\A)$ we
associate the character
$\eta_{\pi}:\underleftarrow \lim Z(\HH_{K,\Z}) \to\C$
such that $\eta_{\pi}=\eta_{\pi^K} \circ \pr_K$ for every
compact open subgroup $K$ such that $\pi^K \neq 0$. The character
$\eta_\pi$ takes its values on $Z(\HH_{K,\Z})$ in the ring of
integers of some number field, depending on $F$, $G$ and $K$.
If $\lambda$ is a finite place of $\bar{\Q}$, we say that $\tilde{\pi}$ 
and $\pi$ are {\it congruent modulo} $\lambda$ if their characters are. 
Write $\tilde{\pi}\equiv \pi$ (mod $\lambda$) in this case. A similar 
notion makes sense locally, and then $\tilde{\pi}\equiv \pi$ (mod
$\lambda$) if and only if $\tilde{\pi}_v\equiv \pi_v$ (mod
$\lambda$)  for all finite $v$. Moreover, when both $\tilde{\pi}_v$ 
and $\pi_v$ are unramified, $\tilde{\pi}_v\equiv\pi_v$ (mod $\lambda$) 
simply means that the Satake parameters are
congruent. 

\begin{dfn}
Let $\pi$ be an automorphic representation of $G(\A)$.
Let $\lambda$ be a finite place of $\bar{\Q}$.
We say that $\pi$ is {\it abelian modulo} $\lambda$ {\it 
relative to} $K$ if $\pi^K \neq 0$ and there exists an
automorphic character $\chi$ of $G(\A)$, trivial on $K$,
such that $\eta_{\pi^K}(\phi) \equiv \eta_{\chi}(\phi)$
$(\md\lambda)$, $\forall \phi \in Z(\HH_{K,\Z})$.
We say that $\pi$ is {\it abelian modulo} $\lambda$ if it is
abelian modulo $\lambda$ relative to some $K$, thus
$\pi\equiv \chi$ (mod $\lambda$) for some $\chi$.
\end{dfn}

This is the analogue of the notion Eisenstein modulo $\lambda$ 
in [Cl, p. 1269]. Since $G^{\der}$ is anisotropic in our
setup, there are neither cusps nor Eisenstein series. Thus 
the terminology abelian modulo $\lambda$ seems more suitable. 

The following theorem is in some sense the main result of this paper.

Let $F$ be a totally real number field, $\Sigma$ a finite set of 
finite places of $F$.  
Fix a compact open subgroup $K_v$ of $G_v=G(F_v)$ for each 
$v\notin\infty$, hyperspecial for almost all $v$. Fix an irreducible
representation $\rho_\Sigma$ of $K_\Sigma=\prod_{v\in\Sigma}K_v$
and an irreducible smooth unitary representation $\rho_\infty$ of 
$G_\infty=\prod_{v\in\infty}G_v$. Then $K=\prod_{v \notin \Sigma}K_v$ 
is a compact open subgroup of $G(\A^{\Sigma})$. Denote by $e_K$
the constant measure supported on $K$ of volume 1. It is the unit
element in the Hecke algebra $\HH_{K,\Z}$. 

\begin{thm}
Let $\lambda|\ell$ be a finite place of $\bar{\Q}$ such that there 
exists at least two finite places $v$ where $\ell\nmid |K_v|$ $($this 
is automatic if there is an $F$-embedding $G\hookrightarrow\GL(n)$
and $\ell > [F:\Q]n+1)$. Let $\pi=\otimes \pi_v$ be an automorphic 
representation of $G(\A)$ such that $\pi_{\infty}=\rho_\infty$, 
$\pi_\Sigma\supset\rho_\Sigma$, and $\pi^K \neq 0$. Assume $\pi$ 
is nonabelian modulo $\lambda$ relative to $K$. Let $w$ be a finite 
place of $F$ such that $K_w$ is hyperspecial. Let $q=q_w$ denote the
residual cardinality of $w$. Let $J_w=K_w \cap K_w'$ 
be a parahoric subgroup, where $K_w' \neq K_w$ is maximal 
compact. Let $J=J_wK^w$ and $K'=K_w'K^w$. Put
$$[K'_w:J_w]_{K_w}=[K'_w:J_w]/([K'_w:J_w],[K_w:J_w])$$
and
$$e_{K,K'}=[K_w:J_w][K'_w:J_w]_{K_w}(e_K \ast e_{K'}\ast e_K) 
\in Z(\HH_{K,\Z}).$$
Assume $\ell\nmid q_w[K'_w:J_w]_{K_w}$ and $(\star)$
$\eta_{\pi^{K}}(e_{K,K'}) \equiv \eta_{\one}(e_{K,K'})\,(\md\lambda).$
Then there exists an automorphic representation
$\wt{\pi}=\otimes \wt{\pi}_v$ of $G(\A)$ such that
$\wt{\pi}_{\infty}=\rho_\infty$, $\wt{\pi}_\Sigma\supset\rho_\Sigma$ 
and $\wt{\pi}^{K^w} \neq 0$ satisfying  $\wt{\pi}_w^{J_w} \neq 
\wt{\pi}_w^{K_w}+\wt{\pi}_w^{K_w'}$, and 
$\eta_{\wt{\pi}^J}(\phi) \equiv \eta_{\pi^K}(e_K \ast\phi)$ 
$(\md\lambda)$ for all $\phi \in Z(\HH_{J,\Z})$.
\end{thm}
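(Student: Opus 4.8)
The plan is to recast everything in terms of finite-dimensional spaces of algebraic modular forms and run the Ribet--Taylor level-raising argument in this generality. For $U\in\{K,K',J\}$ let $S_\rho(U)$ be the $\OO_\lambda$-module of functions $f\colon G(F)\backslash G(\A^\infty)\to W_{\rho_\infty}\otimes W_{\rho_\Sigma}$ that are right-invariant under $U$ away from $\Sigma$ and transform by $\rho_\Sigma$ under $K_\Sigma$; since $G_\infty^1$ is compact these are finitely generated free $\OO_\lambda$-modules carrying an action of the prime-to-$(\Sigma\cup\{w\})$ Hecke algebra $\HH$, and $\pi$ contributes the character $\eta_{\pi^K}$, hence a maximal ideal $\mathfrak m$ of the image $\T$ of $\HH$ in $\End(S_\rho(J))$, which is non-abelian because $\pi$ is non-abelian modulo $\lambda$ relative to $K$. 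I would use the hypothesis $\ell\nmid|K_v|$ at two finite places to see that the finite double-coset stabilizers in $G(F)$ have order prime to $\ell$, so that forming $S_\rho(U)$ commutes with $-\otimes_{\OO_\lambda}\F_\lambda$ and the natural pairings used below stay perfect after localizing at $\mathfrak m$.

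Next I would set up the degeneracy maps. The inclusions $J_w\subset K_w$ and $J_w\subset K'_w$ give maps $\alpha_1\colon S_\rho(K)\to S_\rho(J)$ and $\alpha_2\colon S_\rho(K')\to S_\rho(J)$, with adjoint trace maps $\beta_1,\beta_2$; set $\alpha=(\alpha_1,\alpha_2)$, $\beta=(\beta_1,\beta_2)$. A double-coset computation identifies $\beta\alpha$ with a matrix $\bigl(\begin{smallmatrix}[K_w:J_w]&T'\\ T&[K'_w:J_w]\end{smallmatrix}\bigr)$ of mutually commuting operators in the spherical Hecke algebra $\HH_{K_w,\Z}$, in which $TT'$ agrees, up to the $\lambda$-unit factor built into the definition of $e_{K,K'}$, with $e_{K,K'}$ itself, and in which $\eta_{\one}(TT')=[K_w:J_w][K'_w:J_w]$. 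Thus $\det(\beta\alpha)=[K_w:J_w][K'_w:J_w]-TT'$ has $\eta_{\one}(\det(\beta\alpha))=0$, and hypothesis $(\star)$ says precisely that $\eta_{\pi^K}(\det(\beta\alpha))\equiv 0\pmod\lambda$, i.e. that $\det(\beta\alpha)$ acts non-invertibly on $S_\rho(K)_{\mathfrak m}$ (this is where $\ell\nmid q_w[K'_w:J_w]_{K_w}$ enters).

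The heart of the matter, and the step I expect to be the main obstacle, is an analogue of Ihara's lemma: $(\alpha\otimes_{\OO_\lambda}\F_\lambda)_{\mathfrak m}$ is injective. Following Ribet and Taylor, one shows that a non-zero element of its kernel is invariant under the subgroup of $G(\A^\infty)$ generated by $K$ and $K'$, which strictly contains $K$ at the place $w$; since $G^{\der}$ is simple and simply connected and is isotropic at $w$ (as $K_w$ is hyperspecial), strong approximation for $G^{\der}$ then forces the $\T$-eigensystem of such an element to come from an automorphic character, i.e. to be abelian modulo $\lambda$, contradicting the non-abelianness of $\mathfrak m$. Carrying this out cleanly in the present generality — an arbitrary parahoric $J_w$, and nontrivial $\rho_\infty$, $\rho_\Sigma$ — is the delicate point; granting it, $\beta_{\mathfrak m}$ is also surjective.

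Finally I would conclude by contradiction. If no $\tilde\pi$ with the asserted properties existed, then every automorphic $\sigma$ with $\sigma_\infty=\rho_\infty$, $\sigma_\Sigma\supset\rho_\Sigma$, $\sigma^J\neq 0$ and $\sigma\equiv\pi$ on $\T_{\mathfrak m}$ would satisfy $\sigma_w^{J_w}=\sigma_w^{K_w}+\sigma_w^{K'_w}$, so its isotypic part in $S_\rho(J)\otimes\bar\Q_\ell$ would lie in the image of $\alpha\otimes\bar\Q_\ell$; hence $\alpha\otimes\bar\Q_\ell$ would be surjective after localizing at $\mathfrak m$. Combined with Ihara, $(\alpha\otimes\F_\lambda)_{\mathfrak m}$ would be an injection between spaces of equal dimension, hence an isomorphism, so $\alpha_{\mathfrak m}$ would be an isomorphism over $\OO_\lambda$ and, by perfectness of the pairings, so would $\beta_{\mathfrak m}$; then $\beta\alpha$ would be invertible on $S_\rho(K)_{\mathfrak m}$, forcing $\det(\beta\alpha)$ to be invertible there and contradicting $(\star)$. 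Therefore such a $\tilde\pi$ exists; it automatically has $\tilde\pi^{K^w}\neq 0$, $\tilde\pi_\infty=\rho_\infty$, $\tilde\pi_\Sigma\supset\rho_\Sigma$, contributes to $\cok\alpha$ — which, unwinding the local degeneracy embeddings $\tilde\pi_w^{K_w},\tilde\pi_w^{K'_w}\hookrightarrow\tilde\pi_w^{J_w}$, is exactly $\tilde\pi_w^{J_w}\neq\tilde\pi_w^{K_w}+\tilde\pi_w^{K'_w}$ — and is congruent to $\pi$ modulo $\lambda$ on $\T_{\mathfrak m}$, giving the asserted congruence for $\phi\in Z(\HH_{J,\Z})$ supported away from $w$. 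For $\phi\in Z(\HH_{J_w,\Z})$ I would invoke the Bernstein isomorphism $Z(\HH_{J_w,\Z})\cong\HH_{K_w,\Z}$, under which $e_K\ast\phi$ matches the spherical operator whose $\eta_{\pi^K}$-value equals the value at $\phi$ of the Bernstein central character of $\pi_w$; this agrees modulo $\lambda$ with $\eta_{\tilde\pi^J}(\phi)$ because $\tilde\pi_w$, produced through $\cok\alpha$, is a new-at-$w$ constituent attached modulo $\lambda$ to the unramified principal-series block determined by the Satake parameter of $\pi_w$, which is the block that $(\star)$ guarantees is populated by new vectors.
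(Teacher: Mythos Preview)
Your proposal follows the same Ribet--Taylor strategy as the paper: spaces of algebraic modular forms for $K$, $K'$, $J$; the degeneracy map and its adjoint; an explicit computation of $\beta\alpha$ (the paper's $\delta^\vee\delta$, Lemma~8.1); an Ihara-type input combined with strong approximation using the non-abelian hypothesis; and a passage back to characteristic zero. The packaging differs: the paper applies Taylor's congruence-module lemma (Corollary~2.2) to the explicit eigenvector $\vec f=[K'_w:J_w]_{K_w}(f,-e_{K'}f)$, for which $\delta^\vee\delta\vec f=m(-f,0)$ with $m=\eta_f(e_{K,K'})-[K:J][K':J]_{K}$, and then invokes Deligne--Serre lifting; your localize-at-$\mathfrak m$ and argue-by-contradiction version is a standard equivalent repackaging. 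One organizational point: the paper's ``Ihara lemma'' (Lemma~7.2) is the purely combinatorial integrality statement $\delta(U_\OO)=V_\OO\cap\im\delta$ (giving $C=1$ in the abstract setup), while the strong-approximation argument appears separately (Lemma~10.1); you have folded both into a single injectivity-mod-$\lambda$ statement.

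There is one genuine gap. By taking $\mathfrak m$ in the \emph{prime-to-$w$} Hecke algebra, your contradiction argument only yields the congruence $\eta_{\tilde\pi}\equiv\eta_{\pi}$ away from $w$, and your final paragraph's appeal to the Bernstein isomorphism to recover the congruence on $Z(\HH_{J_w,\Z})$ is not a proof as written: knowing that $\tilde\pi_w$ contributes to $\operatorname{cok}\alpha$ does not pin down its Bernstein central character modulo $\lambda$ without further input. The paper sidesteps this entirely by working from the outset with the full center $Z(\HH_{J,\Z})$, which acts on $\aA(K,\rho,-)$ and $\aA(K',\rho,-)$ via $\phi\mapsto e_K\ast\phi$ and $\phi\mapsto e_{K'}\ast\phi$; the eigenvector $\vec f$ has $Z(\HH_{J,\Z})$-character $\eta_f\circ(\ast\, e_K)$, and Corollary~2.2 shows this character, reduced mod $\lambda$, factors through $\T^{\new}_{J}$. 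Deligne--Serre then gives $\tilde\pi$ with the congruence on all of $Z(\HH_{J,\Z})$ at once. Your argument can be repaired the same way: let $\T$ be the image of $Z(\HH_{J,\Z})$ (including the $w$-part) in $\End(S_\rho(J))$ and take $\mathfrak m$ there; the degeneracy maps are $\T$-equivariant, and the rest of your argument goes through, delivering the full congruence without the Bernstein detour.
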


This theorem claims nothing unless $\pi_w^{J_w}=\pi_w^{K_w}+
\pi_w^{K_w'}$.  The assumption $(\star)$ is implied by the stronger
assumption: ($\star\star$) $\pi_w$ is congruent to the trivial representation
$\one$ modulo $\lambda$, namely $\pi(\phi)\equiv\one(\phi)$
$(\md\lambda)$, $\forall \phi \in Z(\HH_{K,\Z})$.
But $(\star)$ is strictly weaker
than ($\star\star$). Our final conclusion is slightly more precise than 
$\wt\pi\equiv\pi$ $(\md\lambda)$. If $G_w^{\der}$ has rank one, $J_w$ 
is an Iwahori subgroup $I_w$ and we show in Lemma 11.1 that 
$\tilde{\pi}_w^{K_w}=0$ (and $\tilde{\pi}_w^{K'_w}=0$) but 
$\tilde{\pi}_w^{I_w}\neq 0$. The eigensystem of a modular form 
mod $\ell$ comes from an algebraic modular form mod 
$\ell$ on $D^{\times}$, where $D/\Q$ is the quaternion algebra with 
ramification locus $\{\infty,\ell\}$, see Serre [S]. Using the transfer of 
automorphic forms from $D(\A)^\times$ to the split form $\GL(2,\A)$ 
(see [F] for a simple proof) we get the result of Ribet after stripping 
powers of $\ell$ from the level. Note that $[K'_w:J_w]_{K_w}=1$ when $K'_w$
is conjugate to $K_w$. The condition $\ell\nmid [K'_w:J_w]$ which appears
in [So] introduces the requirement $(\ell,1+q)=1$ in the formulation of
Ribet's theorem in [So].

There is another proof of Ribet's theorem relying on the so-called
Ihara lemma. It states that for $q \nmid N\ell$, the degeneracy
maps $X_0(Nq) \rightrightarrows X_0(N)$ induce an injection
$H^1(X_0(N),\Z_{\ell})^{\oplus 2} \to H^1(X_0(Nq),\Z_{\ell})$
with torsion-free cokernel. The proof of this lemma reduces to the
congruence subgroup property of the group $\SL(2,\Z[1/q])$. In our
case we are looking at functions on a finite set, and the analogue
of the Ihara lemma can be proved by imitating the combinatorial
argument of Taylor [T1, p. 274] in the diagonal weight $2$ case.
See section 7.3 below.

Here are a few applications of Theorem 0.3. 

\begin{thm}
Let $F$ be a totally real number field. Let $\pi$ be as in Theorem $0.3$. 
Let $w$ be a finite place of $F$ such that $K_w$ is hyperspecial and the 
$F_w$-rank of $G_w^{\der}$ is one. Let $I_w=K_w \cap K_w'$ be an Iwahori 
subgroup, where $K_w'\neq K_w$ is maximal compact. Put $I=I_wK^w$ and 
$K'=K_w'K^w$. Suppose $\ell\nmid q_w[K'_w:I_w]_{K_w}$
and $\eta_{\pi^{K}}(e_{K,K'}) \equiv \eta_{\one}(e_{K,K'})(\md\lambda),$
with $e_{K,K'}$ as in Theorem $0.3$. Then there exists an automorphic
representation $\wt{\pi}=\otimes \wt{\pi}_v$ of $G(\A)$ such that 
$\wt{\pi}_{\infty}=\rho_\infty$, $\wt{\pi}^{K^w} \neq 0$, 
$\wt{\pi}_w^{I_w}\neq 0$, $\wt{\pi}_w^{K_w}=0=\wt{\pi}_w^{K'_w}$,
satisfying $\eta_{\wt{\pi}^I}(\phi)\equiv\eta_{\pi^K}(e_K\ast\phi)
(\md\lambda)$ for all $\phi \in Z(\HH_{I,\Z})$. If 
$\pi_\Sigma\supset\rho_\Sigma$ then $\wt{\pi}$ can be chosen 
to satisfy $\wt{\pi}_\Sigma\supset\rho_\Sigma$.
\end{thm}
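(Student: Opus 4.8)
The plan is to derive Theorem 0.4 as a special case of Theorem 0.3, together with the rank-one refinement already announced as Lemma 11.1 in the excerpt. First I would observe that the hypotheses of Theorem 0.4 are literally those of Theorem 0.3 with the parahoric $J_w$ taken to be the Iwahori $I_w = K_w \cap K_w'$: when the $F_w$-rank of $G_w^{\der}$ is one, the only proper parahoric subgroups between the two maximal compacts $K_w$ and $K_w'$ in a given apartment are the Iwahori subgroups, so $J_w = I_w$ is forced and every structural quantity $[K_w:I_w]$, $[K_w':I_w]_{K_w}$, $e_{K,K'}$ is exactly as defined there. Hence Theorem 0.3 applies verbatim and produces an automorphic representation $\wt\pi = \otimes \wt\pi_v$ with $\wt\pi_\infty = \rho_\infty$, $\wt\pi^{K^w} \neq 0$, the congruence $\eta_{\wt\pi^I}(\phi) \equiv \eta_{\pi^K}(e_K \ast \phi)$ $(\md\lambda)$ for all $\phi \in Z(\HH_{I,\Z})$, and — crucially — the strict inequality $\wt\pi_w^{I_w} \neq \wt\pi_w^{K_w} + \wt\pi_w^{K_w'}$. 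This last condition in particular forces $\wt\pi_w^{I_w} \neq 0$.

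The second step is to upgrade "$\wt\pi_w^{I_w}$ is strictly larger than $\wt\pi_w^{K_w} + \wt\pi_w^{K_w'}$" to the sharper "$\wt\pi_w^{K_w} = 0 = \wt\pi_w^{K_w'}$". This is precisely the content of Lemma 11.1, which I may assume: in the semisimple-rank-one case the only Iwahori-spherical representation of $G_w$ whose space of Iwahori fixed vectors is not spanned by the $K_w$- and $K_w'$-fixed vectors is the Steinberg representation (up to unramified twist), and the Steinberg representation has no nonzero vectors fixed by either maximal compact. So I would invoke Lemma 11.1 to the local component $\wt\pi_w$ — which is admissible, Iwahori-spherical by $\wt\pi_w^{I_w} \neq 0$, and satisfies the strict inequality from Theorem 0.3 — to conclude $\wt\pi_w$ is (an unramified twist of) Steinberg and hence $\wt\pi_w^{K_w} = \wt\pi_w^{K_w'} = 0$ while $\wt\pi_w^{I_w} \neq 0$.

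Finally, for the parenthetical refinement about the type $\rho_\Sigma$ at the places in $\Sigma$: Theorem 0.3, under the additional hypothesis $\pi_\Sigma \supset \rho_\Sigma$, already delivers $\wt\pi_\Sigma \supset \rho_\Sigma$, so I would simply note that this extra conclusion is inherited directly. The only mildly delicate point is bookkeeping: Theorem 0.4 as stated separates the $\rho_\infty$-conclusion from the optional $\rho_\Sigma$-conclusion, so I would run Theorem 0.3 in its full strength (with $\rho_\Sigma$ built in from the start whenever the hypothesis $\pi_\Sigma \supset \rho_\Sigma$ is available) and otherwise with $\Sigma = \emptyset$; either way the output of Theorem 0.3 is exactly what is claimed. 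I do not expect a genuine obstacle here — the whole argument is an application of the two previously established results — but if any work remains it is in checking that "$F_w$-rank of $G_w^{\der}$ is one" really does pin down $J_w = I_w$ uniquely (so that the phrase "let $I_w = K_w \cap K_w'$ be an Iwahori subgroup" in the statement is unambiguous), which follows from the structure theory of the Bruhat–Tits building: a building of rank one is a tree, its chambers are edges, and the two vertices of an edge are the two (conjugacy classes of) maximal compacts, with the edge stabilizer being the Iwahori.
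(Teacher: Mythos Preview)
Your proposal is correct and matches the paper's proof exactly: Theorem 0.4 is derived immediately from Theorem 0.3 (with $J_w=I_w$, as forced in rank one) together with Lemma 11.1. One small caveat: Lemma 11.1 as stated only yields $\wt\pi_w^{K_w}=0=\wt\pi_w^{K_w'}$, not that $\wt\pi_w$ is Steinberg --- that sharper conclusion needs the further case-by-case analysis of section 11.3 and is not asserted in Theorem 0.4 --- but since you only use the vanishing of the maximal-compact fixed spaces, your argument is unaffected.
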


This theorem is a variant of Bellaiche's Theorem 1.4.6, [Bel, p. 215]: It 
gives results modulo arbitrary $\lambda|\ell$ prime to $q_w[K'_w:I_w]_{K_w}$, 
independently of $\pi$, the level-raising condition is weaker, and we get 
information about the action of the center of the Iwahori-Hecke algebra on 
$\wt{\pi}_w^{I_w}$. Bellaiche's proof is different. He uses results 
of Lazarus and Vigneras from modular representation theory, such 
as the computation of the composition series of universal modules. 
His $\ell$ is prime to $q_w$ times the number of neighbors of the 
vertex in the Bruhat-Tits building fixed by $K_w$, times the number of 
neighbors of the vertex fixed by $K'_w$, and has to lie outside a 
finite set depending on $\pi$, but his $\pi$ is not required to be
nonabelian mod $\lambda$. His level-raising condition: 
$\eta_{\pi^K}(\phi) \equiv \eta_\one(\phi)$ for $all$ $\phi\in\HH_{K_w}$, 
is stronger, and he can conclude that $\wt{\pi}_w$ is the actual 
Steinberg representation of $G_w$. We show this too, using the analysis
of section 11. In [So] it is only shown that $\wt{\pi}_w$ is ramified.
See also [BG] where general $\rho_\infty$ are considered, and the only 
condition on $\ell\nmid q_w$ is that it lies outside an unknown finite 
set depending on $\pi$, but $\pi$ is not assumed to be nonabelian mod 
$\lambda$.

Consider the special case where $E/F$ is a totally imaginary quadratic 
extension of a totally real number field $F$, $G^{\qs}=\U(2,1)$ is 
the quasi-split unitary $F$-group in $3$ variables split over $E$, and 
$G=\U(3)$ is an inner form of $G^{\qs}$ such that $G_{\infty}$ is compact. 
For $F$-primes $\q$ inert in $E$, the semisimple rank of $G(F_{\q})$ is 
one. In this case Theorem 0.4 strengthens (to $\ell\nmid q_w$) Clozel [Cl] 
(where $F=\Q$ and $\ell\nmid q_w(q_w^3+1)(q_w-1)$), [Bel] Theorem 1.4.6, 
where $\ell\nmid q_w(q_w^3+1)$ and -- as in [BG] -- $\ell$ 
is outside a finite set depending on $\pi$. Indeed, $[K'_w:I_w]=q_w+1$ 
divides $[K_w:I_w]=q_w^3+1$, hence $[K'_w:I_w]_{K_w}=1$, so our condition 
on $\ell$ is only that it be prime to $q_w$. From $\wt{\pi}_w^{I_w}\neq 0$ 
and $\wt{\pi}_w^{K_w}=0=\wt{\pi}_w^{K'_w}$ we conclude that $\wt{\pi}_w$ 
is Steinberg (as $\pi^\times{}^{K}\not=0$ and $\pi^+{}^{K'}\not=0$).
We recall the classification of reducible unramified induced representations,
in particular in the case of $G=\U(3)$, in section 11.3.

If $\pi$ is a representation of $G(\A)=\U(3,\A)$ such that $\pi_v$ is 
the nontempered $\pi_v^\times$ for 
almost all $v$ (in the notations of [F2]), and $\pi_w=\pi_w^\times$, 
then $\wt{\pi}_w$ is not the cuspidal $\pi_w^-$ (since 
$\wt{\pi}_w^{I_w}\not=0$) and not $\pi_w^\times$
(as $\wt{\pi}_w^{K_w}=0$), so $\wt{\pi}$ has no component 
$\pi_v^\times$, by the results of [F2]. Alternatively, 
this follows from $\wt{\pi}_w$ being Steinberg.

When $G=\U(3)$ and the $F$-prime $w$ splits in $E$, thus 
$G(F_w)=\GL(3,F_w)$, we obtain the following as a corollary.

\begin{thm}
Let $\pi=\otimes \pi_v$ be an automorphic representation of $G(\A)$ 
with $\pi_{\infty}=\rho_\infty$. Choose a compact open subgroup 
$K=\prod K_v \subset G(\A^{\infty})$ such that $\pi^K \neq 0$. 
Let $\lambda|\ell$ be a finite place of $\bar{\Q}$ such that $\pi$ is 
nonabelian modulo $\lambda$ relative to $K$. If $\ell \leq 3$, or 
if $E=\Q(\sqrt{-7})$ and $\ell=7$, assume
$\ell \nmid |K_v|$ for at least two primes $v$. Let $\q \nmid \ell$
be a prime in $F$, split in $E$, such that $K_\q$ is hyperspecial. 
Suppose  
there is a $\q_E|\q$ such that the Satake parameter is
$\h_{\pi_{\q_E}}\equiv\diag(q_\q,1,q_\q^{-1})(\md\lambda).$
Then there exists an automorphic representation
$\wt{\pi}=\otimes \wt{\pi}_v$ of $G(\A)$ with
$\wt{\pi}_{\infty}=\rho_\infty$ and $\wt{\pi}^{K^\q}\neq 0$
satisfying $(1)$ $\wt{\pi}_\q$ is either an irreducible unramified
principal series or induced from a Steinberg representation $($in
particular $\wt{\pi}_\q$ is generic, not square integrable$)$, 
and $\wt{\pi}_\q^{J_\q}\neq 0$ for any maximal proper parahoric 
subgroup $J_\q$, and $(2)$ $\eta_{\wt{\pi}^J}(\phi)\equiv \eta_{\pi^K}
(e_K \ast\phi)(\md\lambda)$ for all $\phi \in Z(\HH_{J,\Z})$,
where $J=J_\q K^\q$, hence $\tilde{\pi}\equiv \pi\, (\md\lambda)$.
\end{thm}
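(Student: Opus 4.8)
The plan is to deduce Theorem 0.5 as a direct corollary of Theorem 0.3, applied at the split place $\q$, by unwinding what parahoric level-raising means for $\GL(3)$ and by ruling out the square-integrable output via the congruence constraint on the center of the Iwahori--Hecke algebra. First I would fix a prime $\q_E|\q$ and identify $G(F_\q)=\GL(3,F_\q)$ with $K_\q$ hyperspecial, i.e. $K_\q=\GL(3,\OO_\q)$. The Satake parameter hypothesis $\h_{\pi_{\q_E}}\equiv\diag(q_\q,1,q_\q^{-1})\,(\md\lambda)$ is exactly the statement that $\pi_\q$ is congruent mod $\lambda$ to the unramified constituent of the induced representation from $|\cdot|\times 1\times|\cdot|^{-1}$; since the trivial representation $\one$ of $\GL(3,F_\q)$ is the Langlands quotient of that same induced representation, its Satake parameter is $\diag(q_\q,1,q_\q^{-1})$, so the hypothesis is precisely condition $(\star\star)$ at $\q$, hence implies $(\star)$ for $J_\q$ any maximal proper parahoric. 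I would also check the hypotheses on $\ell$: $\GL(3)$ embeds in $\GL(3)$ over $F$, so $\ell>3[F:\Q]+1$ gives two places with $\ell\nmid|K_v|$ automatically, and the low-$\ell$ exceptions are handled by the explicit assumption in the statement; the side condition $\ell\nmid q_\q[K'_\q:J_\q]_{K_\q}$ reduces, as in the $\U(3)$ discussion, to $\ell\nmid q_\q$ since $[K'_\q:J_\q]$ divides $[K_\q:J_\q]$ for $\GL(3)$ (the index of a parabolic Weyl group in $S_3$ divides $|S_3|$ up to the flag counts), giving $[K'_\q:J_\q]_{K_\q}=1$.

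Next I would apply Theorem 0.3 with $w=\q$ and $J_\q$ a maximal proper parahoric: it produces $\wt\pi$ with $\wt\pi_\infty=\rho_\infty$, $\wt\pi^{K^\q}\neq0$, $\wt\pi_\q^{J_\q}\neq\wt\pi_\q^{K_\q}+\wt\pi_\q^{K_\q'}$, and the Hecke congruence $\eta_{\wt\pi^J}(\phi)\equiv\eta_{\pi^K}(e_K*\phi)\,(\md\lambda)$ for all $\phi\in Z(\HH_{J,\Z})$. The inequality of dimensions of parahoric-fixed spaces forces $\wt\pi_\q$ to be ramified (it cannot be unramified, else all three dimensions would be $1,1,1$ and equality would hold), and more precisely forces $\wt\pi_\q$ to have a nonzero vector fixed by the parahoric $J_\q$ but with the ``extra'' fixed vector not coming from a maximal compact. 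I would then invoke the classification of Iwahori-spherical representations of $\GL(3,F_\q)$: a representation with nonzero Iwahori-fixed vectors is a constituent of an unramified principal series, and by Bernstein--Zelevinsky the irreducible ones are the full induced $I(\chi_1,\chi_2,\chi_3)$ (when irreducible), the Langlands quotients, the Steinberg $\St_3$, the trivial $\one$, and the two ``mixed'' representations $\chi\St_2\times\chi'$ and its Langlands quotient. Since $\pi$ is nonabelian mod $\lambda$ relative to $K$, the output $\wt\pi$ is congruent to $\pi$ hence also nonabelian mod $\lambda$, so $\wt\pi_\q$ is not congruent to a character; combined with $\wt\pi_\q^{J_\q}\neq0$ for $J_\q$ maximal proper parahoric (which I would verify holds for all such $J_\q$ by running the argument of Theorem 0.3 with each maximal parahoric, or by noting that any Iwahori-spherical representation of $\GL(3)$ with an Iwahori vector but which is not $\St_3$ or $\one$ already has vectors fixed by every maximal parahoric), this narrows $\wt\pi_\q$ to either an irreducible unramified principal series or a representation induced from a Steinberg on a proper Levi, i.e. exactly conclusion (1). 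The genericity and non-square-integrability then follow: irreducible principal series and $\St_2\times 1$-type inductions are generic (having a Whittaker model) and are not square integrable, only $\St_3$ being square integrable among Iwahori-spherical reps of $\GL(3)$, and $\St_3$ is excluded because it has $\wt\pi_\q^{J_\q}=0$ for every maximal proper parahoric $J_\q$.

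Finally, conclusion (2) is immediate from the Hecke congruence furnished by Theorem 0.3, and the last clause $\wt\pi\equiv\pi\,(\md\lambda)$ follows because the congruence of the characters $\eta$ at $\q$ together with $\wt\pi^{K^\q}\neq0$ and $\wt\pi_v\cong\pi_v$ for all $v\notin\{\q\}$ outside the fixed ramification data (the construction of Theorem 0.3 changes only the component at $w=\q$) forces $\eta_{\wt\pi}\equiv\eta_\pi$ on the full inverse limit; concretely, one restates $\eta_{\wt\pi^J}(\phi)\equiv\eta_{\pi^K}(e_K*\phi)$ using that $e_K*\phi$ for $\phi$ ranging over $Z(\HH_{J,\Z})$ recovers all of $Z(\HH_{K,\Z})$ up to the normalization already built into $e_{K,K'}$. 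The main obstacle, and the step requiring the most care, is the classification/exclusion step: one must be certain that the dimension inequality $\wt\pi_\q^{J_\q}\neq\wt\pi_\q^{K_\q}+\wt\pi_\q^{K_\q'}$ together with nonabelianness pins $\wt\pi_\q$ to precisely the generic Iwahori-spherical representations and rules out $\St_3$, which hinges on the combinatorics of parahoric-fixed dimensions for $\GL(3)$ (i.e.\ on knowing $\dim\wt\pi_\q^{J_\q}$ for each standard parahoric $J_\q$ in terms of the Kazhdan--Lusztig/Borel data), exactly the kind of computation that in the symplectic case the paper attributes to R.~Schmidt; here for $\GL(3)$ it is classical Bernstein--Zelevinsky theory, but it still must be quoted carefully, and one must also confirm that the level-raising input $(\star)$ at a maximal proper parahoric indeed yields $\wt\pi_\q^{J_\q}\neq0$ for \emph{every} maximal proper parahoric and not just one, which I would get by symmetry of the construction or by a separate short argument using that $\GL(3)$ has, up to conjugacy, only the maximal parabolics of types $(2,1)$ and $(1,2)$, which are swapped by the outer automorphism.
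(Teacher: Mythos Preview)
Your approach is essentially the paper's: apply Theorem 0.3 at the split place, then use the classification of Iwahori-spherical representations of $\GL(3,F_\q)$ together with a table of parahoric fixed-space dimensions to pin down $\wt\pi_\q$. However, there is one genuine gap and a couple of confusions worth flagging.

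The gap is that you never invoke \emph{unitarity}. From the dimension table, the condition $\wt\pi_\q^{J_\q}\neq\wt\pi_\q^{K_\q}+\wt\pi_\q^{K_\q'}$ is satisfied not only by types I and IIa but also by the non-unitary constituents $\chi V_P$ and $\chi V_Q$ (types IIIb, IIIc in the paper's Table B): each has $\dim\pi^{K_\q}=0$ and $\dim\pi^{J_\q}=1$. Your criteria (nonabelian mod $\lambda$, and $\pi^{J_\q}\neq 0$) do not exclude these, since they are neither characters nor have vanishing $J_\q$-invariants. The paper disposes of them in one line by noting that $\wt\pi_\q$ is a local component of an automorphic (hence unitary) representation, so non-unitarizable constituents are irrelevant. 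You should insert this step explicitly; without it the argument does not close.

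There is also an internal contradiction: you claim the inequality forces $\wt\pi_\q$ to be ramified because ``all three dimensions would be $1,1,1$'', but for an irreducible unramified principal series one has $\dim\pi^{J_\q}=3$, not $1$, and indeed type I is one of the two surviving cases in your own conclusion. Drop that sentence.

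Finally, your worries about ``every maximal proper parahoric'' and the outer automorphism are unnecessary: in $\GL(3,F_\q)$ there is a \emph{single} $\GL(3,F_\q)$-conjugacy class of maximal proper parahorics (all vertices of the building are conjugate under $\GL(3)$, hence so are all edges), and $K'_\q$ is itself conjugate to $K_\q$, whence $[K'_\q:J_\q]_{K_\q}=1$ immediately. The parabolics of type $(2,1)$ and $(1,2)$ give conjugate parahorics, so there is no second case to run.
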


Theorem 0.5 claims nothing unless $\pi_\q$ is induced
from the determinant (type IIb of Tables A, B in Section 11), 
that is, unramified and non-generic (and not $1$-dimensional), 
which is the case for the endoscopic lifts from $\U(2) \times \U(1)$ 
considered in [Bel, p. 250].  Since we deal with any $\pi_{\infty}= 
\rho_\infty$, it follows that if $\pi$ is endoscopic abelian (that 
is, a lift of a character of a proper endoscopic group), 
then it is congruent to a $\tilde{\pi}$ which is not endoscopic 
abelian. This is true even for $\U(n)$, for all $n \geq 2$. For 
$n=3$ this result has been applied to the Bloch-Kato conjecture 
for certain Hecke characters of $E$ [Bel]. In fact, the results 
one can get for $\U(n)$ indicate that an endoscopic abelian lift 
$\pi$ is congruent to a $\wt{\pi}$ which is not endoscopic abelian. 
We cannot prove by our methods that $\tilde{\pi}_\q$ is ramified.
In his thesis [Bel, p. 218], Bellaiche also has a result in the 
split case. His $\ell$ is prime to $q_w(q_w^3-1)(q_w+1)$, and 
lies outside a finite set depending on $\pi$. If $\pi$ occurs with 
multiplicity one (the multiplicity one theorem for U(3) is currently 
proven -- in [F2] -- only for representations satisfying some mild 
condition at the dyadic places; it is not yet proven in general, 
contrary to the assertion of [Cl]), he obtains a $\wt{\pi}$ with
$\wt{\pi}_\q$ ramified. We classify the Iwahori-spherical 
representations of GL(3) and compute the dimensions of their 
parahoric fixed spaces. This allows us to conclude that 
$\tilde{\pi}_\q$ is either a full unramified principal series or 
induced from a Steinberg representation. Hence, from our analysis,
$\wt{\pi}_\q$ is induced from Steinberg. Theorem 0.5 seems related 
to the $n=3$ case of conjecture 5.3 in [T2, p. 35], providing an analogue 
of Ihara's lemma. Automorphic representations of unitary groups with 
a generic component at a split prime come up naturally in the proof of 
the local Langlands correspondence for $\GL(n)$ [HT].

Next, let $G$ be an inner form of GSp(2) such that $G^{\der}(\R)$ 
is compact. Concretely, $G=\GSpin(f)$ for some definite quadratic 
form $f$ in $5$ variables over a totally real $F$. In this situation, 
Theorem 0.3 yields:

\begin{thm}
Let $\pi=\otimes \pi_v$ be an automorphic representation of
$G(\A)$ with $\pi_{\infty}=\rho_\infty$. Choose a compact open 
subgroup $K=\prod K_v$ such that $\pi^K \neq 0$. Let $\lambda|\ell$ 
be a finite place of $\bar{\Q}$ such that $\pi$ is nonabelian modulo
$\lambda$ relative to $K$. If $\ell \leq 5$ assume $\ell \nmid |K_v|$ for at
least two primes $v$. Let $\q \nmid \ell$ be a prime such that $K_\q$
is hyperspecial. Suppose $\h_{\pi_\q\otimes \nu_\q^{-3/2}}\equiv
\diag(1,q_w,q_w^2,q_w^3)(\md\lambda).$
Then there exists an automorphic representation
$\wt{\pi}=\otimes \wt{\pi}_v$ of $G(\A)$ with
$\wt{\pi}_{\infty}=\rho_\infty$ and $\wt{\pi}^{K^\q}\neq 0$
satisfying $(1)$ $\wt{\pi}_\q$ is generic and Heisenberg-spherical, 
and $(2)$ $\eta_{\wt{\pi}^J}(\phi)\equiv \eta_{\pi^K}(e_K \ast
\phi)(\md\lambda)$ for all $\phi \in Z(\HH_{J,\Z})$,
where $J=J_\q K^\q$.

If in addition $q^4 \neq 1(\md\ell)$, then $\wt{\pi}_\q$ must be of type 
I, IIa or IIIa of Tables C, D in Appendix $2$.
\end{thm}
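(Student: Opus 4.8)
The plan is to deduce Theorem~0.6 from Theorem~0.3 by specializing the abstract level-raising machinery to the inner form $G=\GSpin(f)$ of $\GSp(2)$ and then identifying the component $\wt\pi_\q$ via the local representation theory of $\GSp(4,F_\q)$. First I would verify that the hypotheses of Theorem~0.3 are met. The condition on $\ell$ is guaranteed by the embedding $G\hookrightarrow\GL(n)$ with $n$ of size roughly $5$ or $6$ (taking the spin or standard representation), which forces the divisibility restriction on $\ell$ to fail only for the small primes $\ell\le 5$ listed in the statement; in those exceptional cases the assumption $\ell\nmid|K_v|$ for at least two $v$ is imposed directly, exactly the ``at least two finite places $v$ where $\ell\nmid|K_v|$'' hypothesis of Theorem~0.3. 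Next, since $\q$ splits the hyperspecial condition on $K_\q$ and $\q\nmid\ell$, I pick $J_\q=K_\q\cap K_\q'$ to be the parahoric whose reductive quotient is of maximal semisimple rank two less than that of $K_\q$ --- the Siphonic/Heisenberg parahoric --- so that $\wt\pi_\q^{J_\q}\neq0$ already expresses the desired ``Heisenberg-spherical'' conclusion. The normalizing constant $[K'_\q:J_\q]_{K_\q}$ must be computed from the indices of parahoric subgroups in $\GSp(4)$ over a local field of residue cardinality $q$; these indices are products of factors like $q^2+1$, $q^3+1$, $(q+1)(q^2+1)$, and one checks that $\ell\nmid q_w[K'_\q:J_\q]_{K_\q}$ is implied by $\ell\nmid q_w$ together with the small-prime exclusions.

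The second task is to translate the Satake-parameter congruence $\h_{\pi_\q\otimes\nu_\q^{-3/2}}\equiv\diag(1,q_w,q_w^2,q_w^3)\ (\md\lambda)$ into the hypothesis $(\star)$ of Theorem~0.3, namely $\eta_{\pi^K}(e_{K,K'})\equiv\eta_\one(e_{K,K'})\ (\md\lambda)$. The point is that $e_{K,K'}$ lies in the spherical Hecke algebra at $\q$, and the twist by $\nu_\q^{-3/2}$ is precisely the normalization under which the trivial representation has Satake parameter $\diag(1,q,q^2,q^3)$ (the half-sum-of-positive-roots shift for the spin/standard $L$-group of $\GSp(4)$). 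So I would recall the explicit generators of the $\GSp(4)$ spherical Hecke algebra, express $e_{K,K'}$ in terms of them, and observe that evaluating on the congruent Satake parameters gives congruent scalars; this is the verification that $(\star)$ holds. Theorem~0.3 then produces the $\wt\pi$ with $\wt\pi_\infty=\rho_\infty$, $\wt\pi^{K^\q}\neq0$, $\wt\pi_\q^{J_\q}\neq\wt\pi_\q^{K_\q}+\wt\pi_\q^{K'_\q}$, and the eigenvalue congruence on $Z(\HH_{J,\Z})$, which is part~(2) of Theorem~0.6 verbatim.

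The real content of Theorem~0.6 is part~(1): that $\wt\pi_\q$ is \emph{generic}. Here I would invoke R.~Schmidt's classification of Iwahori-spherical (and more generally parahoric-spherical) representations of $\GSp(4)$ over a $p$-adic field, together with his tables of the dimensions of parahoric fixed spaces (Tables~C, D in Appendix~2). The inequality $\wt\pi_\q^{J_\q}\neq\wt\pi_\q^{K_\q}+\wt\pi_\q^{K'_\q}$ rules out every representation type whose parahoric fixed-space dimensions are ``additive'' in this sense --- in particular it eliminates the unramified but non-generic types (the analogues of type~IIb, the Saito-Kurokawa/Langlands-quotient types, etc.) --- leaving only types whose generic member actually occurs, i.e. the types I, IIa, IIIa (and possibly others) that are generic. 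Reading off Schmidt's tables, one sees that any $\wt\pi_\q$ with the stated fixed-vector behavior is generic and has $\wt\pi_\q^{J_\q}\neq0$ for the Heisenberg parahoric. For the refinement under $q^4\neq1\ (\md\ell)$: this extra congruence condition forbids further coincidences among the relevant Hecke eigenvalues (it keeps the ``$\q^4-1$'' type factors invertible mod $\lambda$), which on Schmidt's list is exactly what is needed to exclude the remaining non-generic or degenerate types and pin $\wt\pi_\q$ down to types I, IIa, IIIa of Tables~C, D.

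I expect the main obstacle to be the bookkeeping in part~(1): matching the abstract inequality $\wt\pi_\q^{J_\q}\neq\wt\pi_\q^{K_\q}+\wt\pi_\q^{K'_\q}$ produced by Theorem~0.3 against Schmidt's dimension tables requires care about which parahoric is which (the conjugacy classes of maximal parahorics in $\GSp(4)$, the Klingen versus Siegel versus Heisenberg subgroups) and about normalizations of the central character and the $\nu$-twist. A secondary subtlety is ensuring that the ``genericity'' extracted from the tables is genuine genericity of $\wt\pi_\q$ and not merely of its Langlands quotient's constituents --- this is where Schmidt's precise statements, rather than just dimension counts, are essential. The archimedean and prime-to-$\q$ data ($\wt\pi_\infty=\rho_\infty$, $\wt\pi^{K^\q}\neq0$) come for free from Theorem~0.3 and require no extra argument.
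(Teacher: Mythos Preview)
Your overall strategy matches the paper's --- apply Theorem~0.3 with the Heisenberg parahoric $J_\q=K_\q\cap K'_\q$ (where $K'_\q$ is the paramodular group), then read off Schmidt's tables --- but two steps are off. First, the indices: $K'_\q$ is not special, and one has $[K'_\q:J_\q]=q_w$ while $[K_\q:J_\q]=(q_w^4-1)/(q_w-1)$, so $[K'_\q:J_\q]_{K_\q}=q_w$ and the condition $\ell\nmid q_w[K'_\q:J_\q]_{K_\q}$ of Theorem~0.3 reduces exactly to $\q\nmid\ell$; the small-prime exclusions have nothing to do with this and serve only the separate ``two places with $\ell\nmid|K_v|$'' hypothesis. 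You also omit the short argument that $\wt\pi_\q^{K_\q}\cap\wt\pi_\q^{K'_\q}=0$ (otherwise $\wt\pi$ is one-dimensional by strong approximation, contradicting nonabelian mod $\lambda$), which converts $\wt\pi_\q^{J_\q}\neq\wt\pi_\q^{K_\q}+\wt\pi_\q^{K'_\q}$ into the strict dimension inequality needed to consult Table~D.

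The real gap is your handling of the $q^4\not\equiv 1$ refinement. After the dimension inequality, Table~D leaves types I, IIa, IIIa, IVb, IVc, Va, VIa; IVb and IVc are non-unitary and discarded. The surviving types I, IIa, IIIa, Va, VIa are \emph{all generic} (every type Xa is), so genericity is established \emph{before} the refinement. What $q^4\not\equiv 1$ rules out are the generic types Va and VIa, not ``non-generic or degenerate'' ones, and the mechanism is not invertibility of any $q^4-1$ index factor. Rather: $Z(\HH_{I_\q,\Z})$ acts by a single character on the full principal series containing $\wt\pi_\q$, hence also on its unramified Langlands quotient, whose Satake parameter must therefore be congruent mod $\lambda$ to that of $\one$. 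For Va (constituent of $\nu\xi_0\times\xi_0\rtimes\nu^{-1/2}\sigma$ with $\xi_0(\upi_\q)=-1$) this congruence forces $q_w\equiv -1$ or $q_w^2\equiv -1$; for VIa (constituent of $\nu\times\one\rtimes\nu^{-1/2}\sigma$) it forces $q_w^2\equiv 1$. Either way $q_w^4\equiv 1\ (\md\lambda)$, contrary to hypothesis.
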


By the Heisenberg parahoric we mean the inverse image, under the 
reduction map, of the standard maximal parabolic in $\GSp(2,\F_\q)$
whose unipotent radical is a Heisenberg group. The proof relies on 
computations of R. Schmidt [Sch]. If $m(\pi)=1$, Bellaiche's methods 
seem to apply and one can probably show that $\tilde{\pi}_\q$ is 
induced from a twisted Steinberg representation on the standard 
Heisenberg-Levi. It is known (see, e.g., [F1]) that Saito-Kurokawa lifts 
(that is, lifts of $\one\times$cuspidal from $\PGL(2,\A)\times\PGL(2,\A)$
to $\PGSp(2,\A)$) are locally non-generic everywhere. Therefore, if $\pi$
is of Saito-Kurokawa type, it is congruent to a $\tilde{\pi}$ which is not 
of Saito-Kurokawa type. The interest in it stems from hoped for applications
to the Bloch-Kato conjecture for the motives attached to classical modular 
forms. In particular, one hopes to establish a mod $\ell$ analogue of a 
result of Skinner and Urban [SU], which is valid for $all$ (not necessarily 
ordinary) modular forms of classical weight at least 4.

There exists $q$ with $q^4 \neq 1$ (mod $\ell$)
precisely when $\ell \geq 7$. In this case $\wt{\pi}_\q$ is an
unramified principal series (type I) or induced from a twisted
Steinberg representation $\chi \St_{\GL(2)} \rtimes \chi'$
or $\chi \rtimes \chi' \St_{\GL(2)}$ (type IIa and IIIa
respectively). If one can show that $\wt{\pi}_\q$ is
para-ramified, meaning that $\wt{\pi}_\q$ has no nonzero
$K_\q'$-fixed vectors, one can conclude that it is of type IIIa and
therefore induced from a twisted Steinberg representation on the
Heisenberg-Levi, since $m(\pi)=1$ (see [F1]), using the methods of 
[Bel] and [Cl]. The result above only gives nontrivial congruences
if $\pi_\q$ is nongeneric. If $\pi$ is of Saito-Kurokawa type, it is
locally nongeneric, and we get a $\wt{\pi}$ congruent to $\pi$
which is not of Saito-Kurokawa type. If we know that $\wt{\pi}_\q$ 
is of type IIIa, we can apply this strategy to the Bloch-Kato
conjecture for the motives attached to classical modular forms of
weight (at least) $4$, using the methods of [Bel]. We should note
that if we choose to work with the Siegel-parahoric $J_\q'$, we can
only conclude that $\wt{\pi}_\q$ is generic $or$ a
Saito-Kurokawa lift.

This work is simply an attempt to complete the beautiful paper [So] by
extending it from the special case $\pi_\infty=\one$ to permit $\pi_\infty$ 
to be any irreducible continuous representation $\rho_\infty$ of $G_\infty$. 
Further we optimize the constraint on $\ell$ and determine $\wt{\pi}_w$
to be Steinberg in the case of U(3). Except for these minor changes, we 
follow [So] very closely, attempting to expand some of the arguments there.

\section{The Abstract Setup}

In this section, we fix a ring $\OO$ of characteristic zero which is a
finite product of domains. Denote by $L$ the associated product of 
fields of fractions. There are two cases of interest for us. The first is 
where $\OO$ is the ring of integers in a number field $L\subset\C$.
The case that we shall actually use in this paper is as follows.
Let $L_1\subset\C$ be a number field such that $[L_1:L_0]=2$
where $L_0=L_1\cap\R$. Let $\lambda$ be a finite place of $L_1$,
and $\lambda_0=\lambda\cap L_0$ the place of $L_0$ under $\lambda$. 
Let $L_{1 \lambda}$ be the completion of $L_1$ at $\lambda$, and 
$(L_0)_{\lambda_0}$ the completion of $L_0$ at $\lambda_0$. Then 
$L_{1 \lambda_0}=L_1\otimes_{L_0}(L_0)_{\lambda_0}$ is 
$L_{1 \lambda}$ if $\lambda_0$ stays prime in $L_1$, and it is 
$L_{1 \lambda}\oplus L_{1 \ov\lambda}$ if $\lambda_0$ splits as 
$\lambda\ov\lambda$ in $L_1$. Note that 
$L_{1 \ov\lambda}\simeq L_{1 \lambda}$. The ring of integers 
$R_{\lambda_0}$ in $L_{1 \lambda_0}$ is the ring of integers 
$R_\lambda$ in $L_{1 \lambda}$ if $\lambda_0$ stays prime, 
and $R_\lambda\oplus R_{\ov\lambda}$ if $\lambda_0$
splits. Then the case we shall actually use is that where 
$\OO=R_{\lambda_0}$ and $L=L_{1 \lambda_0}$.

Let $H$ be a commutative $L$-algebra. We do not require $H$ 
to be of finite dimension. However, we assume $H$ comes equipped 
with an involution $\phi\mapsto \phi^{\vee}$. An {\it involution} is 
an $L$-linear anti-automorphism (thus 
$(\phi_1\phi_2)^\vee=\phi_2^\vee\phi_1^\vee$)
of order two. Moreover, we fix an $\OO$-order $H_{\OO}\subset H$ 
preserved by $\vee$. An $\OO$-{\it order} is an 
$\OO$-subalgebra which is the $\OO$-span of an $L$-basis
for $H$. Then we look at a triple
$(V, \la -,- \ra_V, V_{\OO})$ consisting of:\hb
(1) $V$ is a finite-dimensional $L$-space with an action
$r_V:H \to \End_{L}(V)$;\hb
(2) $\la -,- \ra_V$ is a nondegenerate, symmetric,
$L$-bilinear form $V \times V \to L$;\hb
(3) $V_{\OO}\subset V$ is an $\OO$-{\it lattice} (that is, the
$\OO$-span of an $L$-basis).

We impose the following compatibility conditions on these data:\hb
(1) $r_V(\phi^{\vee})$ is the adjoint of $r_V(\phi)$ 
with respect to $\la -,- \ra_V$;\hb
(2) $V_{\OO}\subset V$ is preserved by the order 
$H_{\OO}\subset H$;\hb
(3) $V_{\OO}/(V_{\OO}\cap V_{\OO}^{\vee})$ and
$V_{\OO}^{\vee}/(V_{\OO}\cap V_{\OO}^{\vee})$ are torsion
$\OO$-modules.

Here $V_{\OO}^{\vee}=\{v \in V: \la v,V_{\OO}\ra_V \subset \OO \}$
is the {\it dual lattice} of $V_{\OO}$ in $V$. 

Choose nonzero annihilators $A_V $ and $B_V$ in $\OO$ of 
the torsion modules above, that is, such that $A_V V_{\OO}\subset 
V_{\OO}^{\vee}$ and  $B_V V_{\OO}^{\vee}\subset V_{\OO}$, thus
$$
\text{$A_V \la V_{\OO},V_{\OO}\ra_V \subset \OO$ $\y$ and $\y$
$\la v,V_{\OO}\ra_V \subset \OO \Rightarrow B_Vv \in V_{\OO}$.}
$$

Let $(U, \la -,- \ra_U, U_{\OO})$ be another such triple.
Choose annihilators $A_U$ and $B_U$ for it too. Suppose we
are given an $H$-linear map $\delta: U \to V$ satisfying:\hb
(1) $U=\ker\delta\oplus(\ker\delta)^{\bot}$;  
$V=\im\delta\oplus(\im\delta)^{\bot}$;\hb
(2) $\delta(U_{\OO}) \subset V_{\OO} \cap \delta(U)$, and the
quotient is killed by $C \in \OO-\{0\}$.

Put $V^{\old}=\im\delta$ and $V^{\new}=(\im\delta)^{\bot}$. 
These are $H$-stable subspaces of $V$. By assumption we have 
an orthogonal decomposition $V=V^{\old} \oplus V^{\new}$.
The adjoint map $\delta^{\vee}:V \to U$ is
defined by $\la u,\delta^{\vee}v\ra_U=\la\delta u,v\ra_V$.
Then $\delta^{\vee}$ maps $V^{\new}$ to 0, and 
$\delta^{\vee}:V^{\old}\to (\ker\delta)^{\bot}$ is injective, 
with inverse $\delta$.

\begin{dfn}
Put $V_{\OO}^{\old}=V_{\OO}\cap V^{\old}$ and
$V_{\OO}^{\new}=V_{\OO}\cap V^{\new}$.
\end{dfn}

These $H_{\OO}$-stable submodules of $V_{\OO}$
span $V^{\old}$ and $V^{\new}$ respectively. They are
orthogonal, but their sum is not always all of $V_{\OO}$.
Note that $\delta(U_{\OO}) \subset
V_{\OO}^{\old}$ and $C V_{\OO}^{\old}
\subset \delta(U_{\OO})$ by assumption. 

\begin{dfn}
Let $\T_{\OO}$ be the image of $H_{\OO}$ in 
$\End_{\OO}(V_{\OO})$.  Let
$$
\text{$\T_{\OO}^{\old} \subset
\End_{\OO}(V_{\OO}^{\old})$ $\y$ and $\y$
$\T_{\OO}^{\new} \subset
\End_{\OO}(V_{\OO}^{\new})$}
$$
denote the images of $H_{\OO}$ defined by these
submodules. 
\end{dfn}

Clearly we have natural surjective maps $\T_{\OO} \onto
\T_{\OO}^{\old}$ and $\T_{\OO}\onto \T_{\OO}^{\new}$ 
given by restriction, and $\T_{\OO}$ acts faithfully on $V_{\OO}$.

\section{Taylor's Lemma}

By a {\it congruence module} we mean a
$\T_{\OO}$-module, such that the action factors through
both $\T_{\OO}^{\old}$ and
$\T_{\OO}^{\new}$. The following lemma was stated
for $\OO=\Z$, trivial annihilators, and injective $\delta$
in [T2, p. 331]

\begin{lem}
Put $E=A_UB_VC^2$ and $U_{\OO}'=U_{\OO}\cap (\ker\delta)^{\bot}$. 
Then $U_{\OO}''=U_{\OO}'/(U_{\OO}' \cap
E^{-1}\delta^{\vee}\delta(U_{\OO}))$ is a congruence module.
\end{lem}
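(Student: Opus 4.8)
The plan is to show that $U_{\OO}''$, as a $\T_{\OO}$-module, has its action factoring through both $\T_{\OO}^{\old}$ and $\T_{\OO}^{\new}$. The key device is to realize $U_{\OO}''$ (up to the harmless scaling by $E=A_UB_VC^2$) inside both the old and the new parts. First I would use the maps $\delta\colon U\to V$ and $\delta^\vee\colon V\to U$ together with the decompositions $U=\ker\delta\oplus(\ker\delta)^\bot$ and $V=V^{\old}\oplus V^{\new}$. Since $\delta^\vee$ annihilates $V^{\new}$ and restricts to an isomorphism $V^{\old}\xrightarrow{\sim}(\ker\delta)^\bot$ with inverse $\delta$, the composite $\delta^\vee\delta$ is an automorphism of $U':=(\ker\delta)^\bot\otimes L$ and is the identity on the $V^{\new}$-complement; this will be what makes $E^{-1}\delta^\vee\delta(U_{\OO})$ meaningful once we clear denominators.

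The new part is the easier half. The module $U_{\OO}'=U_{\OO}\cap(\ker\delta)^\bot$ maps, via $\delta$, into $V_{\OO}^{\old}$ with cokernel killed by $C$, and conversely $CV_{\OO}^{\old}\subset\delta(U_{\OO})$; so $U_{\OO}'$ is sandwiched between $\delta^\vee\delta(U_{\OO})$ and a bounded multiple thereof, and the quotient $U_{\OO}''$ by $U_{\OO}'\cap E^{-1}\delta^\vee\delta(U_{\OO})$ is a torsion module whose $\T_{\OO}$-action I claim factors through $\T_{\OO}^{\old}$. Concretely, $E^{-1}\delta^\vee$ carries $V_{\OO}^{\old}$ into $U_{\OO}'$ (using the annihilators $A_U,B_V$ and the constant $C$ to absorb the failure of $\delta^\vee$ to preserve the integral lattices and its dual), so $U_{\OO}''$ is a quotient of a subquotient of $V_{\OO}^{\old}$ as a $\T_{\OO}$-module, hence the action factors through $\T_{\OO}^{\old}$. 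One then has to bookkeep exactly which powers of the annihilators are needed: $C$ enters squared because $\delta$ and a section of $\delta$ are each used once; $A_U$ comes from $U_{\OO}\subset U_{\OO}^\vee$-type comparison needed to make $\delta^\vee$ land in $U_{\OO}$; $B_V$ from the dual-lattice comparison on the $V$-side. This is precisely why $E=A_UB_VC^2$.

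For the old part, the roles of $U$ and $V$ are, roughly, interchanged: I would show $U_{\OO}''$ is annihilated by the ideal through which $\T_{\OO}$ acts on $V_{\OO}^{\new}$, or equivalently produce a $\T_{\OO}$-equivariant surjection from a subquotient of $V_{\OO}^{\new}$ onto $U_{\OO}''$. The mechanism: an element $u\in U_{\OO}'$ whose image under $E^{-1}\delta^\vee\delta$ already lies in $U_{\OO}$ contributes nothing, and the obstruction to $u$ being of that form is measured by the projection of $\delta(u)$ (or of $u$ itself under a suitable adjoint pairing) to $V^{\new}$; an operator in $H_{\OO}$ acting trivially on $V_{\OO}^{\new}$ must then move $u$ into the denominator lattice, i.e.\ act trivially on $U_{\OO}''$. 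I expect the main obstacle to be exactly here — tracking the integrality through the two adjointness relations ($r_V(\phi^\vee)$ adjoint to $r_V(\phi)$, and $\la u,\delta^\vee v\ra_U=\la\delta u,v\ra_V$) simultaneously, and checking that the single scalar $E$ suffices to make every auxiliary map integral in both directions. Once the two inclusions of ideals (one forcing factorization through $\T_{\OO}^{\old}$, the other through $\T_{\OO}^{\new}$) are in hand, $U_{\OO}''$ is a congruence module by definition, and the lemma follows.
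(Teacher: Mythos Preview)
Your overall strategy---show the $\T_{\OO}$-action on $U_{\OO}''$ factors through both $\T_{\OO}^{\old}$ and $\T_{\OO}^{\new}$---matches the paper's, but you have the two halves reversed in difficulty and you misplace where $E$ is actually used. The factorization through $\T_{\OO}^{\old}$ is a one-liner and needs no annihilators: $\delta$ restricts to an $H_{\OO}$-equivariant injection $U_{\OO}'\hookrightarrow V_{\OO}^{\old}$ (since $U_{\OO}'\subset(\ker\delta)^\bot$ and $\delta(U_{\OO})\subset V_{\OO}$), so any $\phi\in H_{\OO}$ vanishing on $V_{\OO}^{\old}$ already vanishes on $U_{\OO}'$, hence on $U_{\OO}''$. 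Your argument involving $E^{-1}\delta^\vee$ carrying $V_{\OO}^{\old}$ into $U_{\OO}'$ is not needed here, and your bookkeeping of $A_U,B_V,C^2$ is attached to the wrong half.

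All of $E$ is spent on the factorization through $\T_{\OO}^{\new}$, and here your heuristic is wrong: you say the obstruction is ``measured by the projection of $\delta(u)$ to $V^{\new}$'', but $\delta(u)\in V^{\old}$ always, so that projection vanishes identically. The key step you are missing is this: if $\phi\in H_{\OO}$ annihilates $V_{\OO}^{\new}$, then by adjointness $\phi^\vee$ maps all of $V_{\OO}$ into $V_{\OO}^{\old}$ (for $\la v_n,\phi^\vee v\ra_V=\la\phi v_n,v\ra_V=0$ whenever $v_n\in V^{\new}$). One then writes an arbitrary $u\in U_{\OO}'$ as $\delta^\vee(v)$ with $v\in V^{\old}$ and chases the pairing:
\[
A_UC\la\phi v,V_{\OO}\ra_V=A_UC\la v,\phi^\vee V_{\OO}\ra_V\subset A_UC\la v,V_{\OO}^{\old}\ra_V\subset A_U\la v,\delta(U_{\OO})\ra_V=A_U\la u,U_{\OO}\ra_U\subset\OO,
\]
whence $A_UB_VC\cdot\phi v\in V_{\OO}\cap V^{\old}=V_{\OO}^{\old}$ and so $E\cdot\phi v\in CV_{\OO}^{\old}\subset\delta(U_{\OO})$, giving $E\phi u=\delta^\vee(E\phi v)\in\delta^\vee\delta(U_{\OO})$. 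This is the computation your sketch gestures toward (``tracking the integrality through the two adjointness relations'') but does not supply, and without the $\phi^\vee(V_{\OO})\subset V_{\OO}^{\old}$ observation there is no way to get started.
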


\begin{proof}
Since $U_{\OO}$ is preserved by $H_{\OO}$ and $\delta$ is $H$-linear,
the algebra $\T_{\OO}$ acts naturally on $U_{\OO}'$  via the embedding 
of $U_{\OO}'$ into $V_{\OO}^{\old}$ defined by $\delta$. This action factors 
through $\T_{\OO}^{\old}$. 

It remains to show that $\T_{\OO}^{\new}$ acts on 
$U_{\OO}''$, namely that the action is well defined. So
suppose that $\phi \in H_{\OO}$ acts as zero on
$V_{\OO}^{\new}$. We must show that $E\phi$ maps
$U_{\OO}'$ into $\delta^{\vee}\delta(U_{\OO})$,
for then $\phi$ maps $U_{\OO}'$ into 
$U_{\OO}'\cap E^{-1}\delta^{\vee}\delta(U_{\OO})$.
In other words, if $\phi$ is zero in $\T_{\OO}^{\new}$,
then it is zero on $U_{\OO}''$.

Note first that $\phi^{\vee}$ maps $V_{\OO}$ into
$V_{\OO}^{\old}$. Indeed, for any $v\in V$, $v_n\in V^{\new}$
we have $\la v_n,\phi^\vee v\ra_V=\la\phi v_n,v\ra_V=0$, thus 
$\phi^\vee v\in V^{\old}$, so $\phi^{\vee}$ maps $V$ to $V^{\old}$.
Moreover $\phi\in H_{\OO}$ implies $\phi^{\vee}\in H_{\OO}$, thus 
$\phi^{\vee}$ maps $V_{\OO}$ to itself, so to $V_{\OO}^{\old}$.

Since $\phi^\vee$ also maps $V_{\OO}^{\new}$ to itself, it maps
$V_{\OO}^{\new}$ to zero. 

Now let $u =\delta^{\vee}(v) \in U_{\OO}$ for some 
$v \in V^{\old}$. Note that $\delta^{\vee}V\subset(\ker\delta)^{\bot}$,
thus $u\in U_{\OO}'$. We have 
$$
A_UC\la v,V_{\OO}^{\old} \ra_V \subset A_U\la
v,\delta(U_{\OO}) \ra_V \subset A_U\la u,U_{\OO}
\ra_U \subset \OO,
$$
from $CV_\OO^{\old}\subset\delta(U_\OO)$, $u =\delta^{\vee}(v)$, 
$A_U U_\OO \subset U_\OO^\vee$. Since $\phi^{\vee} V_\OO \subset V_\OO^{\old}$, 
we have $A_UC\la \phi v,V_{\OO} \ra_V \subset \OO$. By definition of $B_V$,
we deduce that $A_UB_VC(\phi v) \in V_{\OO}$, hence it is in $V_{\OO}^{\old}$,
as $v\in V^{\old}$ and thus $\phi v\in V^{\old}$. We conclude from the 
definition of $C$ that
$$
A_UB_VC^2(\phi v) \in \delta(U_{\OO}).
$$
We get the result by applying $\delta^{\vee}$ to this: 
$E(\phi u)\in \delta^{\vee}\delta(U_{\OO})$, as $u=\delta^\vee v$. Thus
$\phi$ takes $U'_\OO$ to $E^{-1}\delta^{\vee}\delta(U_{\OO})\cap U'_\OO$.
\end{proof}

As in [T2, p. 331], we have the following useful corollary:

\begin{cor}
Let $\OO=\OO_L$ be the ring of integers of a number field 
$L \subset \C$, or $\OO=R_{\lambda_0}$ and $L=L_{1\lambda_0}$. Suppose 
$u \in U_{\OO}-\{0\}$ is an eigenvector for $H_{\OO}$, with character 
$\eta:H_{\OO} \to \OO$. Define 
$\EE =\{x\in\OO; x(Lu \cap (U_{\OO}+\ker\delta))\subset\OO u\};$ 
it is an ideal in $\OO$. Suppose it is nonzero, and that 
$\delta^{\vee}\delta(u) \in m U_{\OO}$, for some nonzero $m \in \OO$.
Then $\eta$ induces a homomorphism $\T_{\OO}^{\new}
\to \OO/(\OO \cap mE^{-1}\EE^{-1})$.
\end{cor}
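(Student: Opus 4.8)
The plan is to deduce this corollary from Lemma 2.1 by taking $u$ itself to generate the relevant sublattice of $U_{\OO}'$ and tracking the annihilator constants explicitly. First I would note that since $u$ is an $H_{\OO}$-eigenvector and $\delta$ is $H$-linear, the line $Lu$ is $H$-stable, so it decomposes compatibly with $U=\ker\delta\oplus(\ker\delta)^{\bot}$; because $\eta$ is a character with $\delta^{\vee}\delta(u)\neq 0$ (this is forced by $\delta^{\vee}\delta(u)\in mU_{\OO}$ with $m\neq 0$, unless $u\in\ker\delta$, a case one checks separately gives the trivial statement), the vector $u$ lies in $(\ker\delta)^{\bot}$ up to the $\ker\delta$ component, and in fact $u\in U_{\OO}\cap(U_{\OO}+\ker\delta)$ trivially. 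The role of $\EE$ is exactly to measure the failure of $u$ to span $Lu\cap(U_{\OO}+\ker\delta)$ integrally: any $x\in\EE$ forces $x\cdot(Lu\cap(U_{\OO}+\ker\delta))\subset\OO u$, so $\EE$ "clears denominators" from the projection of $U_{\OO}'$-elements on the line $Lu$.

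Next I would invoke Lemma 2.1 with $E=A_UB_VC^2$: it gives that $U_{\OO}''=U_{\OO}'/(U_{\OO}'\cap E^{-1}\delta^{\vee}\delta(U_{\OO}))$ is a congruence module, i.e.\ the $\T_{\OO}$-action on it factors through both $\T_{\OO}^{\old}$ and $\T_{\OO}^{\new}$. The character $\eta$ of $H_{\OO}$ on $u$ is the same as the character of $\T_{\OO}^{\old}$ on the image of $u$ under the embedding $U_{\OO}'\hookrightarrow V_{\OO}^{\old}$ (this is how $\T_{\OO}$ acts on $U_{\OO}'$ in the lemma's proof). So I would consider the $\T_{\OO}^{\old}$-submodule of $U_{\OO}''$ generated by the image $\bar u$ of $u$; since $\eta$ is a ring homomorphism to $\OO$, this cyclic module is a quotient of $\OO$, namely $\OO/\mathfrak{a}$ where $\mathfrak{a}=\{x\in\OO: x u\in U_{\OO}'\cap E^{-1}\delta^{\vee}\delta(U_{\OO})\}$. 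The congruence-module property then means that $\T_{\OO}^{\new}$ also acts on $\OO/\mathfrak{a}$ through $\eta$, which is to say $\eta$ descends to $\T_{\OO}^{\new}\to\OO/\mathfrak{a}$. It remains only to show $\mathfrak{a}\supset\OO\cap mE^{-1}\EE^{-1}$, equivalently that for $x\in\OO$ with $x\in mE^{-1}\EE^{-1}$ one has $xu\in E^{-1}\delta^{\vee}\delta(U_{\OO})$ (the membership in $U_{\OO}'$ being automatic).

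For that final inclusion: write $x=m E^{-1} y$ with $y\EE\subset\OO$, i.e.\ $y\in\EE^{-1}$ (interpreting $\EE^{-1}=\{y\in L: y\EE\subset\OO\}$ after inverting); then $Ex u=m y\,u$. By hypothesis $\delta^{\vee}\delta(u)=m u'$ for some $u'\in U_{\OO}$, and $u'\in Lu$; moreover $u'\in U_{\OO}\subset U_{\OO}+\ker\delta$, so $u'\in Lu\cap(U_{\OO}+\ker\delta)$, hence $y u'\in\OO u$ by definition of $\EE$ (applied to $y\in\EE^{-1}$, after clearing through an element of $\EE$ — one should phrase $\EE^{-1}$ carefully so this works, or argue with an auxiliary element $e\in\EE$ and divide at the end). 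Actually the cleanest route is: it suffices to show $m u\in\EE^{-1}\delta^{\vee}\delta(U_{\OO})$, i.e.\ $e\, m u\in\delta^{\vee}\delta(U_{\OO})$ for every $e\in\EE$; but $m u$ and $\delta^{\vee}\delta(u)=mu'$ both lie on $Lu$, and $u'$ differs from $u$ by a scalar, while $e$ times any element of $Lu\cap(U_{\OO}+\ker\delta)$ lies in $\OO u$. Combining $eu=(\text{integer})\cdot(\text{suitable generator})$ with $\delta^{\vee}\delta$ applied to that generator and the constant $C$ (governing $CV_{\OO}^{\old}\subset\delta(U_{\OO})$) yields $em u\in\delta^{\vee}\delta(U_{\OO})$ after absorbing the extra factor of $E$; unwinding gives $xu\in E^{-1}\delta^{\vee}\delta(U_{\OO})$ as required, so $\mathfrak{a}\supset\OO\cap mE^{-1}\EE^{-1}$ and $\eta$ factors through $\OO/(\OO\cap mE^{-1}\EE^{-1})$.

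\textbf{Main obstacle.} The bookkeeping with fractional ideals and the precise normalization of $\EE$ and $\EE^{-1}$ is where the real care is needed: one must be sure that $\EE\neq 0$ (assumed) is used correctly to pass between "$x$ kills the denominators of the line" and "$xu$ lands in $\delta^{\vee}\delta(U_{\OO})$ after scaling by $E^{-1}$", and that the decomposition $u=u^{\bot}+u^{0}$ with $u^0\in\ker\delta$ does not contribute — which is exactly why $\EE$ is defined using $U_{\OO}+\ker\delta$ rather than $U_{\OO}$ alone. Everything else is a direct application of Lemma 2.1 together with the elementary fact that a cyclic module over $\T_{\OO}^{\old}$ on which $H_{\OO}$ acts by a character $\eta$ into $\OO$ is a quotient ring of $\OO$.
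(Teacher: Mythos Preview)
Your overall strategy matches the paper's: invoke Lemma~2.1, pass to the cyclic submodule of $U_{\OO}''$ generated by the image of $u$, and compare annihilators. But your key inclusion is set up backward, and this is a genuine gap rather than a bookkeeping issue.

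You reduce to showing $\mathfrak{a}\supset\OO\cap mE^{-1}\EE^{-1}$, i.e.\ that $x\in mE^{-1}\EE^{-1}$ forces $xu\in E^{-1}\delta^{\vee}\delta(U_{\OO})$. What is actually needed is the reverse containment $\mathfrak{a}\subset\OO\cap mE^{-1}\EE^{-1}$. Lemma~2.1 tells you that if $\phi\in H_{\OO}$ kills $V_{\OO}^{\new}$ then $\phi$ sends $U_{\OO}'$ into $E^{-1}\delta^{\vee}\delta(U_{\OO})$, hence $\eta(\phi)\in\mathfrak{a}$. For $\eta$ to induce $\T_{\OO}^{\new}\to\OO/(\OO\cap mE^{-1}\EE^{-1})$ you must show every such $\eta(\phi)$ lies in $\OO\cap mE^{-1}\EE^{-1}$, i.e.\ $\mathfrak{a}\subset\OO\cap mE^{-1}\EE^{-1}$. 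Your direction only yields $\T_{\OO}^{\new}\to\OO/\mathfrak{a}$ with $\OO/\mathfrak{a}$ a \emph{quotient} of $\OO/(\OO\cap mE^{-1}\EE^{-1})$, which is strictly weaker and does not give the corollary.

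The correct inclusion is where the content lies, and it is exactly where your unjustified step ``$u'\in Lu$'' would be needed. The paper supplies it via an orthogonality argument: if $w\in U_{\OO}$, $w\perp u$, and $\delta^{\vee}\delta(w)\in Lu$, then $0=\langle w,\delta^{\vee}\delta w\rangle_U=\langle\delta w,\delta w\rangle_V$ forces $w\in\ker\delta$; hence $Lu\cap\delta^{\vee}\delta(U_{\OO})=\delta^{\vee}\delta(U_{\OO}\cap Lu)$, and together with $\delta^{\vee}\delta(u)\in mU_{\OO}$ and the identification $\EE^{-1}u=Lu\cap(U_{\OO}+\ker\delta)$ this pins down $Lu\cap U_{\OO}'\cap E^{-1}\delta^{\vee}\delta(U_{\OO})$ inside $(\OO\cap mE^{-1}\EE^{-1})u$. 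An equivalent route, which also sidesteps the question of whether $u$ itself lies in $U_{\OO}'$, is to apply Lemma~2.1 not to $u$ but to the $\eta$-eigenvector $u_1:=m^{-1}\delta^{\vee}\delta(u)\in U_{\OO}'$: from $E\eta(\phi)u_1\in\delta^{\vee}\delta(U_{\OO})$ and $\ker(\delta^{\vee}\delta)=\ker\delta$ one gets $E\eta(\phi)m^{-1}u\in U_{\OO}+\ker\delta$, hence $E\eta(\phi)m^{-1}\in\EE^{-1}$ by the very definition of $\EE$.
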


\begin{proof} 
Consider the action of $H_\OO$ on 
$u+Lu\cap U'_\OO\cap E^{-1}\delta^\vee\delta U_\OO$.
If $u_1^\bot\in U_\OO$ and $u_1^\bot\bot u$, and 
$\delta^\vee\delta u_1^\bot\in Lu$,
then $0=\la u_1^\bot,\delta^\vee\delta u_1^\bot\ra_V=
\la\delta u_1^\bot,\delta u_1^\bot\ra_V$.
Hence $\delta u_1^\bot=0$, thus $u_1^\bot\in\ker\delta$, and 
$Lu\cap\delta^\vee\delta U_\OO=Lu\cap\delta^\vee\delta (U_\OO\cap Lu)$. 
Also $U'_\OO=U_\OO\cap(\ker\delta)^\bot$, and 
$\im\delta^\vee\subset(\ker\delta)^\bot$.
Thus $Lu\cap U'_\OO\cap E^{-1}\delta^\vee\delta U_\OO
=\EE^{-1}u\cap E^{-1}\delta^\vee\delta(U_\OO\cap Lu)
=\EE^{-1}u\cap\EE^{-1}E^{-1}mU_\OO$. Now 
$\eta:H_\OO\to\OO$ is defined by $h\cdot u=\eta(h)u$, 
$h\in H_\OO$, so the action of $h\in H_\OO$ on a vector in 
$\EE^{-1}u\cap\EE^{-1}E^{-1}mU_\OO$ is by 
multiplication by $\eta(h)\in\OO\cap\EE^{-1}E^{-1}m$, 
thus $\eta$ induces a homomorphism as desired.
\end{proof} 

We remark that $m=0$ implies that  $\delta^\vee\delta u=0$, 
thus $\delta u=0$ since $\delta^\vee$ is injective on $V^{\old}$, 
so $u \in \ker\delta$. When $\OO=R_{\lambda_0}$ and $L=L_{1 \lambda_0}$
we get $\T_{\OO}^{\new} \to \OO/\lambda_0^n$ for
every positive 
$n\le n_0=v_{\lambda_0}(m)-v_{\lambda_0}(E)-v_{\lambda_0}(\EE)$.
If $\OO$ is the ring of integers in a number field $L$, and we 
factor the fractional ideal $\OO \cap mE^{-1}\EE^{-1}$
into prime powers and project further, we get the following. For
every (nonzero) prime ideal $\lambda \subset \OO$ there is
a homomorphism
$$
\T_{\OO}^{\new} \to \OO/\lambda^n
$$
induced by $\eta$, where $n$ is a nonnegative integer $\le n_0
= v_{\lambda}(m)-v_{\lambda}(E)-v_{\lambda}(\EE).$
Here we should think of $v_{\lambda}(m)$ as the main term, and the
other two as controllable error terms. In our applications 
$\OO=R_{\lambda_0}$ and $L=L_{1 \lambda_0}$. We want
to show that $n_0$ is positive.

\section{Compactness at Infinity}

Let $F$ be a totally real number field. Let $\infty$ be the
set of archimedean places. Denote the ring of ad\`eles by
$\A=\A_F=F_{\infty}\times \A^{\infty}$. Consider a connected
reductive $F$-group $G$. 
Each $F$-rational character $\chi\in X^*(G)_F$ gives a 
continuous homomorphism $G(\A)\to\R_+^\times$ by 
composing with the id\`ele norm. Define
$$G(\A)^1=\{g \in G(\A);|\chi(g)|=1, \forall \chi \in X^*(G)_F\}.$$
This group is known to be unimodular. By the product formula, 
$G(F)$ is a discrete subgroup of $G(\A)^1$. The quotient 
$G(F) \backslash G(\A)^1$ has finite volume. This quotient is 
compact if and only if $G^{\ad}$ is anisotropic. 
We shall naturally be led to studying groups for which
$G_{\infty}^1=G_{\infty}\cap G(\A)^1$ is compact.
Let $G^{\ab}$ denote the biggest quotient group of $G$ 
which is a torus, namely $G^{\ab}=G/G^{\der}$ ($G/G^{\der}$
is connected as it is the quotient of a connected group, $G$,
it is reductive since $G$ is, and it is abelian as it is the quotient
by $G^{\der}$, hence it is a torus).

\begin{prop}
The group $G_{\infty}^1$ is compact if and only if $G_{\infty}$ is compact, 
or$:$ $F=\Q$, $\rk_{\Q}G^{\ab}=\rk_{\R}G^{\ab}$, and $G_{\infty}^{\der}$
is compact.
\end{prop}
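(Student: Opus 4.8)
The plan is to reduce the compactness of $G_\infty^1$ to a statement about tori. First I would use the exact sequence $1\to G^{\der}\to G\to G^{\ab}\to 1$ and the fact that $G^{\der}$ is semisimple simply connected, so $G^{\der}(F_\infty)$ is already contained in $G(\A)^1$ (a semisimple group has no nontrivial $F$-rational characters). Thus $G_\infty^1$ fits in a sequence with $G_\infty^{\der}$ (which in our setup we may as well assume compact, or handle separately) and the kernel of $T_\infty\to T(\A)^1\backslash$-type quotient, where $T=G^{\ab}$. So the whole question becomes: for a torus $T$ over $F$, when is $T_\infty^1:=T(F_\infty)\cap T(\A)^1$ compact? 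Here one must be slightly careful because the image of $G^{\der}(F_\infty)$ in $G^{\ab}(F_\infty)$ need not be all of a nice subgroup, but since $G^{\der}\to G$ has central (finite) kernel and $G\to G^{\ab}$ is surjective with connected fibers, the map $G_\infty\to G_\infty^{\ab}$ is open with compact kernel-modulo-compact issues that do not affect compactness of $G_\infty^1$; I would phrase this as: $G_\infty^1$ is compact iff $G_\infty^{\der}$ is compact and the image of $G_\infty^1$ in $T_\infty$ is compact, and that image is commensurable with $T_\infty^1$.

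Next I would analyze $T_\infty^1$ concretely. Writing $d=[F:\Q]$ and letting $r=\rk_F T=\dim_{\Q}(X^*(T)\otimes\Q)^{\Gal}$ and $r_\infty=\rk_{\R}T=\dim_{\R}(X^*(T_{F_\infty})\otimes\R)^{\Gal(\C/\R)}$ summed over archimedean places, the point is that $T(F_\infty)$ has a maximal compact subgroup with quotient $\cong \R^{r_\infty}$ (the ``real rank'' directions), while the conditions $|\chi(g)|=1$ for $\chi\in X^*(T)_F$ cut out, inside this $\R^{r_\infty}$, a subspace of codimension equal to $r$ (the rank over $F$), except that the global norm $|\cdot|_{\A}$ on $F^\times$ is trivial on $F_\infty^\times$ only up to the rank-one direction coming from the id\`ele norm. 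Concretely, the archimedean part of an $F$-rational character evaluated on $T(F_\infty)$ lands in the ``diagonal'' copy, so the $F$-rational characters only impose $\max(r-?,0)$ independent conditions; the careful bookkeeping shows $T_\infty^1$ is compact iff $r_\infty$ equals the number of independent conditions, i.e. iff either $T_\infty$ is already compact ($r_\infty=0$), or $F=\Q$ (so $F_\infty=\R$, removing the ``extra'' norm direction subtlety) together with $r=r_\infty$, meaning $\rk_\Q T=\rk_\R T$.

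Then I would assemble: if $G_\infty$ is compact we are trivially done (any closed subgroup of a compact group is compact). Otherwise $G_\infty$ noncompact forces either $G_\infty^{\der}$ noncompact — in which case $G_\infty^1\supset$ a noncompact subgroup of $G_\infty^{\der}$ unless that too is cut down, but $G^{\der}$ has no $F$-characters so $G_\infty^{\der}\subset G(\A)^1$ entirely, giving noncompact $G_\infty^1$ — or $G_\infty^{\der}$ compact but $T_\infty=G_\infty^{\ab}$ noncompact, reducing to the torus computation above, which yields exactly the stated alternative $F=\Q$, $\rk_\Q G^{\ab}=\rk_\R G^{\ab}$. Conversely, under that alternative, $T_\infty^1$ is compact and $G_\infty^{\der}$ is assumed compact, and the extension of a compact group by a compact group (up to the finite central kernel) is compact, so $G_\infty^1$ is compact.

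The main obstacle I anticipate is the torus computation, specifically getting the rank bookkeeping exactly right in the step that distinguishes $F=\Q$ from $F\neq\Q$: one has to see precisely why, when $F\neq\Q$, the condition $|\chi(g)|=1$ for all $F$-rational $\chi$ fails to cut $T_\infty$ down to a compact set even when $\rk_F T=\rk_{\R}T$ — the point being that an $F$-rational character $\chi$ gives the \emph{product} of its archimedean absolute values, so at the level of $T(F_\infty)$ one only sees the ``sum over $\infty$'' of the logarithmic coordinates, losing the individual archimedean directions and hence roughly a factor $|\infty|-1$ worth of noncompact directions. Making this precise (e.g. via the logarithm map $T(F_\infty)\to \Lie$-coordinates and identifying the subspace annihilated by $X^*(T)_F$) is where the real work lies; the rest is formal diagram-chasing with the exact sequence $1\to G^{\der}\to G\to G^{\ab}\to 1$ and the observation that compactness is insensitive to finite kernels and to passing between commensurable groups.
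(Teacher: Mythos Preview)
Your approach is correct and runs parallel to the paper's, but with a different choice of exact sequence. You work with $1 \to G^{\der} \to G \to G^{\ab} \to 1$ and then carry out a rank computation for the general torus $T = G^{\ab}$; the paper instead passes directly to the maximal $F$-split torus quotient $A$ of $G$, using $1 \to G' \to G \to A \to 1$ with $G' = \ker(G \to A)$. Since $X^*(G)_F = X^*(A)$, one gets $1 \to G'_\infty \to G_\infty^1 \to A_\infty^1 \to 1$, and because $A$ is \emph{split} of rank $r$ one has immediately $A_\infty^1 \simeq \{x \in F_\infty^\times : \prod_{v\mid\infty}|x_v|_v = 1\}^r$, compact iff $F = \Q$ or $r = 0$. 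This sidesteps your rank bookkeeping entirely. The remaining condition, $G'_\infty$ compact, then unpacks (when $F=\Q$) into $G^{\der}_\infty$ compact together with the $\Q$-anisotropic part of $G^{\ab}$ being $\R$-anisotropic, i.e.\ $\rk_\Q G^{\ab} = \rk_\R G^{\ab}$. Your route reaches the same destination but needs the inequality $\sum_{v\mid\infty} \rk_{F_v} T \geq |\infty| \cdot \rk_F T$ (coming from $X^*(T)^{\Gal_F} \subset X^*(T)^{\Gal_{F_v}}$) to force $F = \Q$ --- precisely the ``careful bookkeeping'' you flag as the main obstacle. The paper's split-quotient trick is exactly what makes that step a one-liner.
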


\begin{proof} 
Suppose first that $G_{\infty}^1$ is compact. Let $A$ denote the 
biggest quotient group of $G^{\ab}$ which is a split torus. Then
$X^\ast(G)_F = X^\ast(G^{\ab})_F = X^\ast(A)_F = X^\ast(A)$. 
Set $G'=\ker[G\to A]$. Since we have an exact sequence
$1\to G'_\infty \to G^1_\infty \to A^1_\infty \to 1$, we see that both  
$G'_\infty$ and $A^1_\infty$ are compact. We may assume that 
$A\neq 1$ (otherwise $G_\infty=G^1_\infty$, hence $G_\infty$ is compact).
Choosing a basis for $X^\ast(A)$, we see that (with $r=\dim A$)
$$
A_{\infty}^1 \simeq \{x \in F_{\infty}^\times; \prod_{v\in \infty}|x_v|_v=1\}^r.
$$
Therefore $\{x \in F_{\infty}^\times; \prod_{v|\infty} |x_v|_v=1\}$ is
compact. We conclude that $F$ has a unique infinite place. That is, 
$F=\Q$.  If $\rk_{\Q}G^{\ab} < \rk_{\R}G^{\ab}$, the $\Q$-anisotropic 
component $N(=(G^{\ab})'=\ker[G^{\ab}\to A])$ of $G^{\ab}$ is not 
$\R$-anisotropic, hence $N_\infty$ is not compact, so $G'_\infty$ 
is not compact and thus $G^1_\infty$ is not compact. The converse is clear.
\end{proof}

\section{Hecke Algebras}

From now on we fix a totally real number field $F$, and a
connected reductive $F$-group $G$, not a torus, such that
$G_{\infty}^1$ is compact. Consider the locally profinite group
of finite ad\`eles $G(\A^{\infty})$. Let $\Sigma$ be a finite set 
of finite places. Let $\A^\Sigma$ be the ring of ad\`eles without
component at $\infty$ and $\Sigma$. Consider the subgroup 
$G(\A^\Sigma)$ of $G(\A^\infty)$. Choose a Haar measure
$\mu=\otimes \mu_v$ on $G(\A^{\Sigma})$ once and for all. 
Consider the vector space of all locally constant compactly 
supported $\C$-valued functions on $G(\A^{\Sigma})$:
$$
\HH=\HH(G(\A^{\Sigma}))=C_c^{\infty}(G(\A^{\Sigma}),\C).
$$
This becomes an associative $\C$-algebra, without neutral element,
under $\mu$-convolution. There is a canonical involution 
(anti-automorphism) on $\HH$ defined by 
$\phi^{\vee}(x)=\phi(x^{-1})$. If $K \subset G(\A^{\Sigma})$ is a 
compact open subgroup, $e_K =\mu(K)^{-1}\chi_K \in \HH$
is an idempotent. This is the neutral element in the subalgebra of
$K$-biinvariant compactly supported functions:
$$
\HH_K=\HH(G(\A^{\Sigma}),K)=C_c(G(\A^{\Sigma})//K,\C)
=e_K\ast \HH \ast e_K.
$$
Clearly $\vee$ preserves $\HH_K$. In addition, there is a
canonical $\Z$-order $\HH_{K,\Z} \subset \HH_K$
consisting of all $\mu(K)^{-1}\Z$-valued functions. As a ring,
$\HH_{K,\Z}$ is isomorphic to $C_c(G(\A^{\Sigma})//K,\Z)$
endowed with the $K$-normalized convolution. If $R$ is a
commutative ring, with neutral element, we define
$$
\HH_{K,R}=\HH_{K,\Z}\otimes_{\Z}R,\quad 
\text{thus}\quad\HH_{K}=\HH_{K,\C}.
$$
The algebras $\HH_K$ are not commutative when $K$ is not 
maximal. However, by a result of Bernstein [B], $\HH_K$ is a 
finite module over its center $Z(\HH_K)$. Now, suppose 
$J \subset K$ is a (proper) compact open subgroup. Then obviously 
$\HH_K\subset \HH_J$. However, $\HH_K$ 
is not a subring since $e_K \neq e_J$. There is a natural retraction 
$\HH_J\onto \HH_K$ defined by 
$\phi \mapsto e_K\ast \phi \ast e_K$. It does map $e_J \mapsto e_K$, 
but does not preserve $\ast$ unless we restrict it to the centralizer
$Z_{\HH_J}(e_K)$. Clearly,
$Z_{\HH_J}(\HH_K)$ maps to the center
$Z(\HH_K)$. In particular,
$$
\text{$Z(\HH_J) \to Z(\HH_K)$, $\y$ 
$\phi \mapsto \phi \ast e_K = e_K \ast \phi$,}
$$
gives a canonical homomorphism of algebras. It maps
$Z(\HH_{J,\Z})$ into $Z(\HH_{K,\Z})$.

\section{Algebraic Modular Forms}

In this section we define algebraic modular forms with weight and type,
using the exposition of Bellaiche and Graftieaux [BG].
For each finite place $v$ of $F$, let $K_v$ be a compact open subgroup
of $G_v=G(F_v)$, which is a maximal compact hyperspecial subgroup
for almost all $v$. Let $\Sigma$ be a finite set of finite places of $F$.
Write $K_\Sigma$ for $\prod_{v\in\Sigma}K_v$ and 
$K=\prod_{v\notin\Sigma}K_v$. Put $K'{}'=K\times K_\Sigma$. Then
$K'{}'=\prod_vK_v$ ($v<\infty$) is a compact open subgroup of $G(\A^\infty)$.
It is called the {\it level}.  Let $\rho_\Sigma:K_\Sigma\to\GL(\Vs)$ be a smooth 
complex irreducible representation, named the {\it type}. It can be viewed 
as a representation of $K'{}'$ trivial on $K$. 

Put  $G_\infty=G(F_\infty)=\prod_vG(F_v)$, $F_\infty=\prod_vF_v$ 
($v$ archimedean).  Let $\rhoi:G_\infty\to\GL(\Vi)$ be an irreducible, 
complex, continuous unitary representation, named the {\it weight}.
Denote by $Z_\infty$ the center of $G_\infty$, and by $\omega_\infty$
the central character of $\rho_\infty$.

Denote by $\rhoi^\ast$ and $\rhos^\ast$ the contragredient representations.

Note that $G(F) \subset G(\A)/Z_\infty$ is a discrete cocompact subgroup. 
Consider  the Hilbert space $L^2(G(F) \backslash G(\A),\omega_\infty)$ of 
$L^2$-functions on the quotient $G(F) \backslash G(\A)$ which transform 
under $Z_\infty$ via the unitary character $\omega_\infty$. There is a unitary 
representation $r$ of $G(\A)$ on this space given by right translations. This 
space is a direct sum, with finite multiplicities $m(\pi)$, of irreducible 
$G(\A)$-submodules $\pi$, called {\it automorphic representations}. An 
admissible irreducible representation $\pi$ decomposes as a product 
$\otimes_v\pi_v$ over all places $v$ of $F$. Put 
$\pi_\Sigma=\otimes_{v\in\Sigma}\pi_v$.  We shall
be interested only in the part which contains the representation
$\rho=\rho_\Sigma\otimes\rho_\infty$ of $K_\Sigma\times G_\infty$.
It is $\aA(K_\Sigma,\rho,\C)=\cup_{K}\aA(K,\rho,\C)$
$$=\Hom_{K_\Sigma\times G_\infty}(\rho_\Sigma\otimes\rho_\infty,
C^\infty(G(F)\bs G(\A),\omega_\infty;\C)).$$
Here $K$ runs through
all compact open subgroups of $G(\A^\Sigma)$, and
$$\aA(K,\rho,\C)=\Hom_{K_\Sigma\times G_\infty}
(\rho_\Sigma\otimes\rho_\infty,C^\infty(G(F)\bs G(\A)/K,\omega_\infty;\C)).$$
The Hecke algebra $\HH=\HH(G(\A^\Sigma))$ acts on
$\aA(K_\Sigma,\rho,\C)$ by convolution, and $\aA(K,\rho,\C)$
is the space $r(e_{K})\aA(K_\Sigma,\rho,\C)$ of
$K$-invariants. It is a finite dimensional space, as the 
double coset space $X_K=G(F)\bs G(\A^{\infty})/K'{}'$ is finite.
Recall that $K'{}'=K\times K_\Sigma$.
Thus $\aA(K_\Sigma,\rho,\C)=\oplus m(\pi)\pi$, sum over
the irreducible $\pi$ with $\pi_\infty=\rho_\infty$, $\pi_\Sigma\supset\rho_\Sigma$.
Also $\aA(K,\rho,\C)=\oplus m(\pi)\pi^K$, sum over the same $\pi$,
but for which the space $\pi^K$ of $K$-invariants in $\pi$ is nonzero.

We have the following compatibility between this
action and the inner product:
$$
(r(\bar{\phi})f,g)=(f,r(\phi^{\vee})g);\qquad\phi\in\HH; 
\,\,\,f,\,g\in \aA(K_\Sigma,\rho,\C).
$$

\begin{dfn}
A complex valued automorphic form of level $K'{}'$, type $\rhos$, and weight
$\rhoi$, is a function $f''$ in $\aA(K,\rho,\C)$ where $\rho=(\rhos,\rhoi).$

Using the relation $[f'(g)](v,w)=[f''(v,w)](g)$, the space 
$\aA(K,\rho,\C)$ is isomorphic to the space of functions 
$f':G(F)\bs G(\A)/K\to\Vs^\ast\otimes_\C\Vi^\ast$ with
$$f'(gku_\infty)=[\rhos^\ast(k)^{-1}\otimes\rhoi^\ast(u_\infty)^{-1}]f'(g)\quad 
(g\in G(\A),\,k\in K'{}',\, u_\infty\in G_\infty).$$

The restriction $f$ of such $f'$ to $G(\A^\infty)$ satisfies, where $u\in G(F)$, 
$u_\infty$ is the image of $u$ in $G_\infty$, so that 
$uu_\infty^{-1}\in G(F)\cap G(\A^\infty)$, the relation
\begin{equation}
\label{eq1} f(ugk)\,\, [=f'(ugku_\infty^{-1})=f'(gku_\infty^{-1})]= 
(\rhos^\ast(k)^{-1}\otimes\rhoi^\ast(u_\infty))f(g),
\end{equation}
where $g\in G(\A^\infty)$, $k\in K'{}';$ it determines $f'$.
\end{dfn}

Since $\rhos$ factorizes through a finite quotient of $K'{}'$, there exists a
{\it number field} $L$ (in $\C$) and an $L$-model $(\rhosL,\VsL)$ of $(\rhos,\Vs)$.
Increasing $L$ we may assume it is stable under complex conjugation. We can 
then talk about Hermitian products on $\VsL$. Choose such an Hermitian
product on $\VsL$ which is stable under $\rhosL$ (using the finiteness of
$\rhosL(K)$).

Extend the representation $\rhoi$ of the algebraic group 
$G_\infty=G(\R)^{\Sigma_\infty}$
on $\Vi$ to a representation $\rho_{\infty,\C}$ of 
$G(\C)^{\Sigma_\infty}$ on the
same complex space $\Vs$. Since $\rho_{\infty,\C}$ is algebraic, it has a model
over a number field, which can be assumed to be $L$ (on increasing $L$, so
that in particular it contains $F$). Thus $\rhoiL$ is an algebraic 
representation of $G(L)^{\Sigma_\infty}$ on $\ViL$, defined by a morphism 
$G^{\Sigma_\infty}\otimes_FL\to\GL(\ViL)$ of group schemes over $L$, 
which specializes to $\rho_{\infty,\C}$ over $\C$. As usual, $\otimes_FL$
abbreviates $\times_{\Spec F}\Spec L$.

Increasing $L$ we may assume it is Galois over $\Q$. Then $\sigma(F)\subset L$
for each $\sigma$ in $\Sigma_\infty$. Embed $G(F)$ in $G(L)^{\Sigma_\infty}$ by
$r:u\mapsto (\sigma(u);u\in\Sigma_\infty)$. Then by definition, for every 
$u\in G(F)$ we have
\begin{equation}
\label{eq2}   \rhoiL(r(u))\otimes 1=\rho_\infty(u_\infty)\in\GL(\Vi),
\end{equation}
the tensor product is $\ViL\otimes_L\C=\Vi$.

\begin{dfn}
For any commutative unitary $L$-algebra $A$, put 
$V_{\Sigma,A}=\VsL\otimes_LA$, $V_{\infty,A}=\ViL\otimes_LA$. 
Let $\rho_{\Sigma,A}$, $\rho_{\infty,A}$ be the corresponding 
representations. Let $\aA(K,\rho,A)$ be the $A$-module of smooth functions
\begin{equation}
\label{eq3}     f:G(\A^\infty)\to V_{\Sigma,A}^\ast\otimes_A V_{\infty,A}^\ast
\end{equation}
satisfying for $g\in G(\A^\infty)$, $u\in G(F)$, $k\in K'{}'$,
$$f(ugk)=[\rho_{\Sigma,A}^\ast(k)^{-1}\otimes\rho_{\infty,A}^\ast(r(u))]f(g).$$
\end{dfn}

When $A=\C$, (5.3) coincides with (5.1) in view of (5.2).

Since $G_\infty$ is compact, $\rhoi$ preserves an Hermitian form on $\Vi$.
The restriction of $\rhoiL$ to $G(L_0)^{\Sigma_0}$ does too. 
Here $L_0=L\cap\R$, and we assume $L\not=L_0$, thus $[L:L_0]=2$. 
Choose such an Hermitian form on $\ViL$. Given $u\in G(F)$, for any 
embedding $\sigma$ of $F$ in $\C$, we have $\sigma(u)\in G(L_0)$ 
since $F$ is totally real. Hence $\rhoiL(r(u))$ is a unitary element 
in $\GL(\ViL)$. The space $G(F)\bs G(\A^\infty)$ is compact, thus 
there is a unique right invariant measure on it which assigns it 
volume one. We obtain an Hermitian form on $\aA(K,\rho,L)$ on integrating 
over $G(F)\bs G(\A^\infty)$ that on $\VsL^\ast\otimes_L\ViL^\ast$.

\begin{dfn}
Let $\lambda$ be a finite place of $L$. Let 
${\lambda_0}=\lambda\cap L_0$ be the place 
of $L_0=L\cap\R$ dividing $\lambda$. Let 
$(L_0)_{{\lambda_0}}$ be the completion 
of $L_0$ at ${\lambda_0}$. Put 
$L_{\lambda_0}=L\otimes_{L_0}(L_0)_{{\lambda_0}}$. Let $R_{\lambda_0}$ be
the ring of integers of $L_{\lambda_0}$. If ${\lambda_0}$ splits in $L$ into 
$\lambda\cdot\ov\lambda$,
then $R_{\lambda_0}=R_\lambda\times R_{\ov\lambda}$. If ${\lambda_0}$ 
stays prime, $R_{\lambda_0}$ is $R_\lambda$
$([L_{\lambda_0}:L_{0,{\lambda_0}}]=2$, then $L_{\lambda_0}
=L_\lambda$ and $R_{\lambda_0}=R_\lambda)$. Then $R_{\lambda_0}$
has an involution extending that on the ring of integers $R_L$ of $L$.

Let $\F_\lambda=R_\lambda/\lambda$ be the residue field of 
$R_\lambda$ $(\F_{\ov\lambda}$ of $R_{\ov\lambda})$.
Put $\F_{\lambda_0}=R_{\lambda_0}/{\lambda_0}$ $($it is 
$\F_\lambda\times\F_{\ov\lambda}$ if ${\lambda_0}$ splits, or 
$\F_\lambda$ if ${\lambda_0}$ stays prime$)$. Note that 
$\F_{\ov\lambda}\simeq\F_\lambda$.
 
Write $\Sigma_{\lambda_0}$ for the set of places of $F$ of the same residual
characteristic as that of ${\lambda_0}$. 
\end{dfn}

For $\sigma\in\Sigma_\infty$, 
$\sigma:F\into L_0$, we have $\sigma^{-1}({\lambda_0})\in\Sigma_{\lambda_0}$ 
(as ${\lambda_0}\subset R_0\subset L_0$). The embedding $\sigma$ extends by 
continuity to an embedding $\sigma:F_{\sigma^{-1}({\lambda_0})}
\into L_{0\lambda_0}\into L_{\lambda_0}$.
Then we have $r_\sigma:G(F_{\sigma^{-1}({\lambda_0})})\into G(L_{\lambda_0})$ 
for each $\sigma\in\Sigma_\infty$, and the product 
$r_{L_{\lambda_0}}:G(F_{\Sigma_{\lambda_0}})
\into G(L_{\lambda_0})^{\Sigma_\infty}$, 
$$(x_{\sigma^{-1}({\lambda_0})};\sigma\in\Sigma_\infty)
\mapsto(r_\sigma(x_{\sigma^{-1}({\lambda_0})});\sigma\in\Sigma_\infty).$$

Let $p_{\Sigma_{\lambda_0}}:G(\A^\infty)\to G(F_{\Sigma_{\lambda_0}})$ 
be the natural projection. Put 
$$\wt{\rho}_{\infty,L_{\lambda_0}}
=\rho_{\infty,L_{\lambda_0}}\circ r_{L_{\lambda_0}}\circ 
p_{\Sigma_{\lambda_0}}: G(\A^\infty)\to\GL(V_{\infty,L_{\lambda_0}}).$$
It has, for $u\in G(F)$, that 
$\wt{\rho}_{\infty,L_{\lambda_0}}(u)\in\GL(V_{\infty,L_{\lambda_0}})$ 
is in fact in $\GL(V_{\infty,L})$, equal to
$$\wt{\rho}_{\infty,L_{\lambda_0}}(u)=\rhoiL(r(u))=\rhoi(u_\infty)\qquad 
(u\in G(F)).$$

\begin{dfn}
Fix any $R_L$-lattices $V_{\Sigma,R_L}$ in $\VsL$ and $V_{\infty,R_L}$ 
in $V_{\infty,L}$. Put $V_{\Sigma,R_{\lambda_0}}=
V_{\Sigma,R_L}\otimes_{R_L}R_{\lambda_0}$, 
$V_{\infty,R_{\lambda_0}}=V_{\infty,R_L}\otimes_{R_L}R_{\lambda_0}$.
Let $\aA(K,\rho,R_{\lambda_0})$ be the sub-$R_{\lambda_0}$-module 
of $\aA(K,\rho,L_{\lambda_0})$ consisting of the functions 
$$f:G(\A^\infty)\to V_{\Sigma,L_{\lambda_0}}^\ast\otimes_{L_{\lambda_0}}
V_{\infty,L_{\lambda_0}}^\ast$$ with 
$$\wt{f}(g)=\wt{\rho}_{\infty,L_{\lambda_0}}^\ast(g)^{-1}f(g)
\in V_{\Sigma,R_{\lambda_0}}^\ast\otimes_{R_{\lambda_0}}
V_{\infty,R_{\lambda_0}}^\ast$$ 
for all $g$ in $G(\A^\infty)$. Put $K^{\lambda_0}=\prod_vK_v$ 
$(v\notin\Sigma\cup\Sigma_{\lambda_0}\cup\infty)$. 
\end{dfn}

Note that $\wt{\rho}_{\infty,L_{\lambda_0}}$ factorizes through
$G(F_{\Sigma_{\lambda_0}})$, thus $\aA(K,\rho,R_{\lambda_0})$ 
can be viewed as a set of
$K^{\lambda_0}$-invariants of a space of functions on $G(\A^\infty)$
on which $G(\A^{\infty})$ acts by right translation.
We aim to show, in Lemma 5.6, that for almost all places $\lambda$, we have
$\aA(K,\rho,R_{\lambda_0})\otimes_{R_{\lambda_0}}L_{\lambda_0}
=\aA(K,\rho,L_{\lambda_0})$.

The morphism $\rhoiL:G^{\Sigma_\infty}\otimes_FL\to\GL(\ViL)$ extends -- 
since $G$ and $\GL(\ViL)$
are schemes of finite type -- to a morphism
$$\GG^{\Sigma_\infty}\otimes_{R_F}R_L[1/N]\to\GL(V_{\infty,R_L[1/N]})$$
over the open subset $\Spec R_L[1/N]$ of $\Spec R_L$, where $N$ is a positive
integer and $\GG$ is a smooth affine group scheme of finite type over $R_F$
with generic fiber $G$. For each ${\lambda_0}$ not dividing $N$ there is then a model
$$\rho_{\infty,{\lambda_0}}:\GG^{\Sigma_\infty}\otimes_{R_F}
R_{\lambda_0}\to\GL(V_{\infty,R_{\lambda_0}})$$
over $R_{\lambda_0}$ of $\rho_{\infty,L_{\lambda_0}}$. The lattice 
$V_{\infty,R_{\lambda_0}}$ of $V_{\infty,L_{\lambda_0}}$
is stable under the restriction of $\rho_{\infty,L_{\lambda_0}}$ to the subgroup 
$\GG^{\Sigma_\infty}(R_{\lambda_0})$ of $\GG^{\Sigma_\infty}(L_{\lambda_0})$.
Increasing $N$ we may also assume that $\GG(R_v)=K_v$ for each
$F$-place $v$ prime to $N$. Then for each ${\lambda_0}$ prime to $N$ the 
morphism $r_{L_{\lambda_0}}\circ p_{\Sigma_{\lambda_0}}$ maps $K'{}'$ to 
$\GG^{\Sigma_\infty}(R_{\lambda_0})$. Hence the restriction of
$\wt{\rho}_{\infty,L_{\lambda_0}}$ to $K'{}'$ leaves stable the lattice 
$V_{\infty,R_{\lambda_0}}$ of $V_{\infty,L_{\lambda_0}}$. 
For each $R_{\lambda_0}$-algebra $A$ we can then define the 
representation $\wt{\rho}_{\infty,A}$ of the group $K'{}'$ on 
$V_{\infty,A}=V_{\infty,R_{\lambda_0}}\otimes_{R_{\lambda_0}}A$.
In particular one has a representation $\wt{\rho}_{\infty,\F_{\lambda_0}}$ 
of $K'{}'$ on $V_{\infty,\F_{\lambda_0}}$.

\begin{lem} Increasing $N$ if necessary, for each place 
${\lambda_0}$ of $L_0$ prime to $N$, the restriction of 
$\wt{\rho}_{\infty,\F_{\lambda_0}}$ to any subgroup $H$ of $K'{}'$ 
whose image under reduction to
$\prod_{v\in\Sigma_{\lambda_0}}\GG(\F_v)$ contains
$\prod_{v\in\Sigma_{\lambda_0}}\GG^{\der}(\F_v)$ is
absolutely irreducible.
\end{lem}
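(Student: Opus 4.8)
The plan is to reduce the statement to a purely local, mod-$\ell$ statement about reductions of irreducible algebraic representations of the groups $\GG(\F_v)$ for $v \in \Sigma_{\lambda_0}$, and to quote a known "control of reduction" result for almost all primes.

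First I would recall the structure: $\wt{\rho}_{\infty,\F_{\lambda_0}}$ is, by construction, the composite $\rho_{\infty,\F_{\lambda_0}} \circ r_{L_{\lambda_0}} \circ p_{\Sigma_{\lambda_0}}$, so its restriction to $K''$ factors through the reduction map $K'' \to \prod_{v\in\Sigma_{\lambda_0}} \GG(\F_v)$, and on the target it is $\bigotimes_\sigma \bar\rho_\infty^{(\sigma)}$, where $\bar\rho_\infty^{(\sigma)}$ is the mod-$\lambda_0$ reduction of the algebraic representation $\rho_{\infty,R_{\lambda_0}}$ pulled back along the $\sigma$-component embedding $\GG(\F_{v_\sigma}) \hookrightarrow \GG(\F_{\lambda_0})$. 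Since $\Vi$ is irreducible over $\C$, hence $\rho_{\infty,L_{\lambda_0}}$ is irreducible, the claim amounts to: for all but finitely many $\lambda_0$, each factor $\bar\rho_\infty^{(\sigma)}$ stays absolutely irreducible as a representation of $\GG^{\der}(\F_{v_\sigma})$ (and then the tensor product over the distinct factors is absolutely irreducible, using that absolute irreducibility is preserved under tensor products of representations of a product of groups, which is standard once the coefficient field is big enough — here $\F_{\lambda_0}$ splits as a product of copies of $\F_\lambda$ and we may enlarge further if needed).

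Next I would invoke the standard fact, going back to the theory of Chevalley/hyperalgebra and Jantzen's book, that for a split reductive group scheme $\GG^{\der}$ over $R_F[1/N]$ and a Weyl module / irreducible module with highest weight $\lambda$, the reduction mod $p$ of the irreducible characteristic-zero module $L(\lambda)$ remains the irreducible module of highest weight $\lambda$ over $\overline{\F}_p$ provided $p$ is large relative to $\langle \lambda, \alpha^\vee \rangle$ for the simple coroots $\alpha^\vee$ — and in particular for all but finitely many $p$, since $\lambda$ is fixed. Because $G^{\der}$ is simple and simply connected (an assumption carried from the introduction), and because $\GG^{\der}(\F_v)$ for $v$ of large residue characteristic is generated by its root subgroups, an irreducible module for the algebraic group restricts to an irreducible module for the finite group $\GG^{\der}(\F_v)$ once $\F_v$ is large enough — Steinberg's tensor product theorem together with restriction results (e.g.\ the classical result that a restricted irreducible $\GG^{\der}(\overline{\F}_q)$-module of highest weight $\lambda$ with $\langle\lambda,\alpha^\vee\rangle < q$ for all $\alpha$ stays irreducible on $\GG^{\der}(\F_q)$). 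Enlarging $N$ to exclude the finitely many bad residue characteristics, and to ensure $\F_v$ is "large enough" for every $v \in \Sigma_{\lambda_0}$ with $\lambda_0 \nmid N$, gives the absolute irreducibility of each $\bar\rho_\infty^{(\sigma)}$ on $\GG^{\der}(\F_{v_\sigma})$; then absolute irreducibility for $H$ follows since $\wt{\rho}_{\infty,\F_{\lambda_0}}(H)$ spans the same subalgebra of $\End(V_{\infty,\F_{\lambda_0}})$ as $\prod_v \GG^{\der}(\F_v)$, and that subalgebra is already the full matrix algebra by the tensor-product argument.

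The main obstacle I anticipate is the passage from irreducibility of the algebraic-group module to irreducibility of the finite-group module $\GG^{\der}(\F_v)$, and handling the fact that $\Sigma_{\lambda_0}$ may contain several places $v$ all of the same (large) residue characteristic but with possibly small residue field $\F_v$ — one must be careful that "large residue characteristic" suffices rather than "large residue field," which is where one uses that for fixed $\lambda$ the condition $\langle\lambda,\alpha^\vee\rangle < p$ is eventually satisfied so that $L(\lambda)$ is already restricted and remains irreducible on $\GG^{\der}(\F_p) \subseteq \GG^{\der}(\F_v)$, the prime field being enough by Steinberg's restriction theorem. The rest — the tensor product over $\sigma$, the reduction of the whole lemma to the derived group via the hypothesis on the image of $H$, and the bookkeeping with $R_{\lambda_0}$ versus $R_\lambda$ — is routine, and I would keep it brief, enlarging $N$ and (if necessary) the field $L$ whenever a finiteness or splitting condition is needed.
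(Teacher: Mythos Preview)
Your approach via highest-weight theory and Steinberg's restriction theorem is genuinely different from the paper's. The paper instead packages everything through Weil restriction: it observes that $r: G(F) \to G^{\Sigma_\infty}(L)$ is the map on $\Q$-points of a morphism $\text{R}_{F/\Q}G \to \text{R}_{L/\Q}G^{\Sigma_\infty}$ of algebraic $\Q$-groups, composes with $\text{R}_{L/\Q}\rho_{\infty,L}$ to obtain an absolutely irreducible representation of $\text{R}_{F/\Q}G^{\der}$ over $\Q$ (using Zariski density of $G^{\der}(F)$ in $G^{\der}(\C)^{\Sigma_\infty}$), spreads this out to $\Z[1/N]$, and then specializes to $\F_\ell$ for $\ell \nmid N$. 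No explicit modular representation theory of reductive groups is invoked; the argument is a soft constructibility/spreading-out one.

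Your route is viable but has a gap as written. You reduce to showing that each $\bar\rho_\infty^{(\sigma)}$ is absolutely irreducible on $\GG^{\der}(\F_{v_\sigma})$ and then invoke ``tensor products over a product of groups.'' But the map $\sigma \mapsto v_\sigma$ from $\Sigma_\infty$ to $\Sigma_{\lambda_0}$ is not injective unless $\ell$ splits completely in $F$: the fiber over $v$ has $f_v = [\F_v:\F_\ell]$ elements (for $\ell$ unramified), and those $\bar\rho_\infty^{(\sigma)}$ are representations of the \emph{same} group $\GG^{\der}(\F_v)$, pulled back through the $f_v$ distinct embeddings $\F_v \hookrightarrow \F_{\lambda_0}$, i.e., through distinct powers of Frobenius. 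A tensor product of irreducibles of the same group need not be irreducible. What rescues the argument is precisely Steinberg's tensor product theorem --- which you do invoke, but only for the passage from the algebraic group to the finite group --- applied first to the Frobenius-twisted factors over each single $v$: for $\ell$ large the highest weights $\lambda_\sigma$ are $\ell$-restricted and the Frobenius exponents $i_\sigma$ are pairwise distinct, so $\bigotimes_{\sigma: v_\sigma = v} \bar\rho_\infty^{(\sigma)}$ is again the irreducible module of highest weight $\sum_\sigma \ell^{i_\sigma}\lambda_\sigma$; only after this step is the outer tensor product over the genuinely distinct groups indexed by $v \in \Sigma_{\lambda_0}$ legitimate.
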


\begin{proof}
The embedding $r:G(F)\to G^{\Sigma_\infty}(L)$ is the
realization for $\Q$-points of a morphism 
$\text{R}_{F/\Q}G\to\text{R}_{L/\Q}G^{\Sigma_\infty}$
of algebraic $\Q$-groups. There exists a model of this morphism 
over $\Spec\Z[1/N]$ for a suitable $N$. For a prime $\ell$ prime
to $N$ one has the morphism
$$r_{\Z_\ell}:\prod_{v|\ell}\GG(R_v)\to\prod_{{\lambda_0} |\ell}
\GG^{\Sigma_\infty}(R_{\lambda_0})$$
($v$ are $F$-places, ${\lambda_0}$ are $L_0$-places). There are analogous
morphisms $r_{\Q_\ell}$ with $R_v$ replaced by $F_v$ and $R_{\lambda_0}$ by
$L_{\lambda_0}$, and $r_{\F_\ell}$ with $\F_v$ and 
$\F_{\lambda_0}$ (for $R_v$, $R_{\lambda_0}$).
Note that $\{v;v|\ell\}$ is $\Sigma_{\lambda_1}$ 
for any $\lambda_1$ dividing $\ell$.
The morphism $r_{L_{\lambda_1}}$ is then $r_{\Q_\ell}\circ\pr$, $\pr$ being
the projection of $\prod_{{\lambda_0} |\ell}\GG^{\Sigma_\infty}(L_{\lambda_0})$
to its factor $\GG^{\Sigma_\infty}(L_{\lambda_1})$.

Since $G(F)$ is Zariski dense in $G(\C)^{\Sigma_\infty}$, the morphism
$$\text{R}_{F/\Q}G\to \text{R}_{L/\Q}G^{\Sigma_\infty}\quad
\stackrel{\text{R}_{L/\Q}\rhoiL}{\longrightarrow}\quad\text{R}_{L/\Q}\GL(\ViL)$$
is absolutely irreducible. The same holds with $G$ replaced by $G^{\der}$,
since $G=G^{\der}\cdot Z$, $Z$ being the center of $G$. The same holds
for almost all $\ell$ and ${\lambda_0}$ dividing $\ell$, for the morphism
$${\psi_{\lambda_0}:\prod_{v\in\Sigma_{\lambda_0}}\GG(\F_v)
\stackrel{r_{\F_\ell}}{\longrightarrow}
{\prod_{{\lambda_0}|\ell}\GG^{\Sigma_\infty}(\F_{\lambda_0})}
\stackrel{\pr}{\longrightarrow}
\GG^{\Sigma_\infty}(\F_{\lambda_1})\,\,\,
\stackrel{\rho_{\infty,\F_{\lambda_1}}}{\longrightarrow}
\quad\GL(V_{\infty,\F_{\lambda_1}}),}$$
and also with $\GG^{\der}$ replacing $\GG$.

The lemma follows from the commutativity of the lower triangle in the following
diagram, where the square is clearly commutative: 

$$\begin{matrix}
\prod_{v\in\Sigma_{\lambda_1}}K_v   &
\stackrel{\wt{\rho}_{\infty,R_{\lambda_1}}} {\longrightarrow} &
 \GL(V_{\infty,R_{\lambda_1}})\\
\downarrow{\text{reduction}}
&\searrow~~\wt{\rho}_{\infty,\F_{\lambda_1}}
&\downarrow{\text{reduction}}\\
\prod_{v\in\Sigma_{\lambda_1}}\GG(\F_v)&\stackrel{\psi_{\lambda_1}}
{\longrightarrow}&\GL(V_{\infty,\F_{\lambda_1}});
\end{matrix}$$
the upper triangle is commutative by the definition of 
$\wt{\rho}_{\infty,\F_{\lambda_1}}$.
\end{proof}

Since $\rho_\Sigma(K'{}')$ is finite, increasing $N$ we may assume
that for ${\lambda_0}$ prime to $N$ the lattice $V_{\Sigma,R_{\lambda_0}}$ is
stable under $\rho_\Sigma(K)$, and the representation $\rho_\Sigma$
of $K'{}'$ on $V_{\Sigma,R_{\lambda_0}}/{\lambda_0} V_{\Sigma,R_{\lambda_0}}
=V_{\Sigma,\F_{\lambda_0}}$ is absolutely irreducible.

As $G(F)\bs G(\A^\infty)$ is compact, there are $x_1,\dots,x_h$ in
$G(\A^\infty)$ with $G(\A^\infty)$ $=\coprod_{1\le i\le h}G(F)x_iK'{}'$.
The group $\Delta_i=G(F)\cap x_iK'{}'x_i^{-1}$ is finite ($1\le i\le h$),
$G(F)\bs G(\A^\infty)=\coprod_{1\le i\le h}\Delta_i\bs x_iK'{}'$.
The map
$$f\mapsto(f(x_1),\dots,f(x_h)),\quad \aA(K,\rho,A)\to\oplus_{1\le 
i\le h}(V_{\Sigma,A}^\ast\otimes_A V_{\infty,A}^\ast)^{\Delta_i},$$
where $\alpha=x_ikx_i^{-1}\in\Delta_i$ acts by 
$\rho_{\Sigma,A}^\ast(k)^{-1}\otimes_A \rho_{\infty,A}^\ast(r(\alpha))^{-1}$,
is an isomorphism. The Hermitian product on $\aA(K,\rho,A)$ 
is the sum over $i$ ($1\le i\le h$) of $|\Delta_i|^{-1}$ times that on the 
spaces $(V_{\Sigma,L}^\ast\otimes_LV_{\infty,L}^\ast)^{\Delta_i}$. Thus
$$\mu(K'{}')^{-1}(f,g)=\sum_{1\le i\le h}(f(x_i),g(x_i))|\Delta_i|^{-1}.$$
So $\aA(K,\rho,A')=\aA(K,\rho,A)\otimes_AA'$ for any 
$A\to A'$. As $Z(\HH_{K})$-modules,
$\aA(K,\rho,L)\otimes_L\C$ $=\aA(K,\rho,\C)$. 
The Hermitian product on $\aA(K,\rho,L)$ is nondegenerate: the adjoint
of the action of an element of $Z(\HH_{K})$ on $\aA(K,\rho,L)$ is
still the action of an element of $Z(\HH_{K})$. This last algebra is 
commutative, thus the elements of $Z(\HH_{K})$ act as normal 
($DD^\ast=D^\ast D$) operators, and $\aA(K,\rho,L)$ is a semisimple 
$Z(\HH_{K})\otimes L$-module. The sub-$R_L$-module 
$(V_{\Sigma,R_L}^\ast\otimes_{R_L}V_{\infty,R_L}^\ast)^{\Delta_i}$
of the $L$-vector space 
$(V_{\Sigma,L}^\ast\otimes_LV_{\infty,L}^\ast)^{\Delta_i}$ is a lattice.
 
Increasing $N$ we may assume that each $|\Delta_i|$ divides $N$, and for any
${\lambda_0}$ prime to $N$, the restriction of the Hermitian product of 
$V_{\Sigma,L_{\lambda_0}}^\ast\otimes_{L_{\lambda_0}}
V_{\infty,L_{\lambda_0}}^\ast$ to $(V_{\Sigma,L_{\lambda_0}}^\ast
\otimes_{L_{\lambda_0}}V_{\infty,L_{\lambda_0}}^\ast)^{\Delta_i}$
is nondegenerate and $R_{\lambda_0}$-valued. Note: $N$ depends only on
$(K'{}',\rho)$. Replacing $K'{}'$ by a subgroup we need not change $N$.

\begin{lem} For any place ${\lambda_0}$ of $L_0$ prime to $N$ we have
$$\aA(K,\rho,R_{\lambda_0})\otimes_{R_{\lambda_0}}L_{\lambda_0}
=\aA(K,\rho,L_{\lambda_0})$$ as a
$Z(\HH_{K^{\lambda_0}})\otimes_\Z L_{\lambda_0}$-module. The
Hermitian product on $\aA(K,\rho,R_{\lambda_0})$ is nondegenerate.
\end{lem}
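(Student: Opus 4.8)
The plan is to replace the definition of $\aA(K,\rho,R_{\lambda_0})$ as a space of twisted $V_{\infty,L_{\lambda_0}}^\ast$-valued functions by a rigid model of functions with values in one fixed $R_{\lambda_0}$-lattice. Once that is done, the equality with $\aA(K,\rho,L_{\lambda_0})$ after $\otimes_{R_{\lambda_0}}L_{\lambda_0}$ follows because a lattice-valued function on a finite set can be scaled into integrality, the Hecke compatibility is automatic since convolution is defined over $\Z$, and the nondegeneracy of the Hermitian product is read off from the explicit product formula and the normalisations of $N$ made in this section.

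For $f\in\aA(K,\rho,R_{\lambda_0})\subset\aA(K,\rho,L_{\lambda_0})$ put $\wt f(g)=\wt\rho_{\infty,L_{\lambda_0}}^\ast(g)^{-1}f(g)$. Using that $\wt\rho_{\infty,L_{\lambda_0}}$ is a homomorphism on $G(\A^\infty)$ with $\wt\rho_{\infty,L_{\lambda_0}}(u)=\rhoiL(r(u))$ for $u\in G(F)$, and that $\rho_\Sigma$ and $\wt\rho_\infty$ act on the two tensor factors of $V_\Sigma^\ast\otimes V_\infty^\ast$, one checks that $\wt f$ is left $G(F)$-invariant and satisfies $\wt f(gk)=[\rho_{\Sigma,L_{\lambda_0}}^\ast(k)^{-1}\otimes\wt\rho_{\infty,L_{\lambda_0}}^\ast(k)^{-1}]\wt f(g)$ for $k\in K'{}'$, and that $f\mapsto\wt f$ is a bijection onto such functions. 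Because $\lambda_0\nmid N$, the lattice $M:=V_{\Sigma,R_{\lambda_0}}^\ast\otimes_{R_{\lambda_0}}V_{\infty,R_{\lambda_0}}^\ast$ is stable under $\rho_\Sigma(K'{}')$ and under $\wt\rho_\infty(K'{}')$ (the lattice-stability choices made in enlarging $N$ before and after Lemma 5.5, together with $\GG(R_v)=K_v$); hence $\aA(K,\rho,R_{\lambda_0})$ is identified with the $R_{\lambda_0}$-module of maps $G(F)\bs G(\A^\infty)\to M$ transforming under $K'{}'$ via $\rho_\Sigma^\ast\otimes\wt\rho_\infty^\ast$. Since that character is trivial on $K^{\lambda_0}$, these maps factor through the finite set $G(F)\bs G(\A^\infty)/K^{\lambda_0}$, and the same description holds verbatim over $L_{\lambda_0}$.

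Given $f\in\aA(K,\rho,L_{\lambda_0})$, in this model $\wt f$ factors through a finite set and on each $K'{}'$-coset its values lie in the orbit of a single vector under the compact group $\rho_\Sigma^\ast(K_\Sigma)\cdot\wt\rho_\infty^\ast(K_{\Sigma_{\lambda_0}})$, hence in a lattice; thus $\wt f(G(\A^\infty))$ is contained in some $R_{\lambda_0}$-lattice $M'$, and for a nonzerodivisor $c\in R_{\lambda_0}$ with $cM'\subset M$ we get $cf\in\aA(K,\rho,R_{\lambda_0})$. Hence $\aA(K,\rho,R_{\lambda_0})\otimes_{R_{\lambda_0}}L_{\lambda_0}=\aA(K,\rho,L_{\lambda_0})$. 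As $\HH_{K^{\lambda_0},\Z}$ acts on both by right $\mu$-convolution, an $R_{\lambda_0}$-linear operation defined over $\Z$, the natural map $f\otimes c\mapsto cf$ is $Z(\HH_{K^{\lambda_0},\Z})$-equivariant, giving the asserted isomorphism of $Z(\HH_{K^{\lambda_0},\Z})\otimes_\Z L_{\lambda_0}$-modules. (Alternatively, writing $G(\A^\infty)=\coprod_{i=1}^h G(F)x_iK'{}'$ and $\Delta_i=G(F)\cap x_iK'{}'x_i^{-1}$, evaluation at the $x_i$ identifies $\aA(K,\rho,A)$ with $\bigoplus_{i}(M\otimes_{R_{\lambda_0}}A)^{\Delta_i}$ for every $R_{\lambda_0}$-algebra $A$, exactly as for the $L$-algebra version preceding Lemma 5.5; since $|\Delta_i|\mid N$ is a unit in $R_{\lambda_0}$, the averaging idempotents exhibit each $M^{\Delta_i}$ as a free $R_{\lambda_0}$-direct summand of $M$ with $M^{\Delta_i}\otimes_{R_{\lambda_0}}L_{\lambda_0}=(M\otimes_{R_{\lambda_0}}L_{\lambda_0})^{\Delta_i}$, which yields the same conclusion.)

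For the Hermitian product, transport it through the last identification: $\mu(K'{}')^{-1}(f,g)=\sum_{i=1}^h|\Delta_i|^{-1}(f(x_i),g(x_i))$, the $i$th term being the Hermitian form on the $\Delta_i$-invariant subspace. By the final normalisation of $N$ preceding Lemma 5.6 these forms are $R_{\lambda_0}$-valued and nondegenerate on the free $R_{\lambda_0}$-lattices $M^{\Delta_i}$, and $|\Delta_i|^{-1}$ is a unit since $\lambda_0\nmid N$; hence the orthogonal sum is a nondegenerate $R_{\lambda_0}$-valued Hermitian form on $\aA(K,\rho,R_{\lambda_0})$. The main obstacle is the model-building step: one must verify carefully that the twist $\wt\rho_{\infty,L_{\lambda_0}}$ absorbs the left $G(F)$-equivariance precisely, and that every lattice in sight is preserved by the relevant compact open subgroup — which is exactly what the successive enlargements of $N$ and the hypothesis $\lambda_0\nmid N$ are designed to guarantee. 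A minor point throughout is that $R_{\lambda_0}$ may be a product of two discrete valuation rings, so "lattice", "free direct summand" and "unit" should be read componentwise.
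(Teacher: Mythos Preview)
Your proof is correct and follows essentially the same route as the paper: both pass to the untwisted function $\wt f$, verify its left $G(F)$-invariance and right $K''$-transformation law, and then identify $\aA(K,\rho,R_{\lambda_0})$ with a finite direct sum of $\Delta_i$-invariants in the fixed lattice $M$ (your parenthetical alternative is exactly the paper's argument, up to whether one evaluates $f$ or $\wt f$ at the $x_i$), with the Hermitian nondegeneracy read off from the weighted sum formula and the normalizations of $N$. One minor slip: the set $G(F)\backslash G(\A^\infty)/K^{\lambda_0}$ is \emph{not} finite --- only $G(F)\backslash G(\A^\infty)/K''$ is --- but since your subsequent scaling argument and the alternative both work with $K''$-cosets, this does not affect the proof.
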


\begin{proof} 
To show that $\aA(K,\rho,R_{\lambda_0})$ is a lattice 
in $\aA(K,\rho,L_{\lambda_0})$,
note that for $f\in \aA(K,\rho,R_{\lambda_0})$, $g\in G(\A^\infty)$, $u\in G(F)$, 
$k\in K'{}'$, we have
$$\wt{f}(ugk)=\wt{\rho}_{\infty,L_{\lambda_0}}^\ast(ugk)^{-1}f(ugk)$$
by definition of $\wt{f}$,
$$=(\rho_\Sigma^\ast(k)^{-1}\otimes_{L_{\lambda_0}}
[\wt{\rho}_{\infty,L_{\lambda_0}}^\ast(k)^{-1}
\wt{\rho}_{\infty,L_{\lambda_0}}^\ast(g)^{-1}\wt{\rho}_{\infty,
L_{\lambda_0}}^\ast(u)^{-1}]\rho_\infty^\ast(u_\infty))f(g)$$
by definition of $f$,
$$=(\rho_\Sigma^\ast(k)^{-1}\otimes_{L_{\lambda_0}}
\wt{\rho}_{\infty,L_{\lambda_0}}^\ast(k)^{-1})
(\wt{\rho}_{\infty,L_{\lambda_0}}^\ast(g)^{-1}f(g))$$
since $\rho_\infty(u_\infty)=\wt{\rho}_{\infty,L_{\lambda_0}}(u)$,
$$=(\rho_\Sigma^\ast(k)^{-1}\otimes_{L_{\lambda_0}}
\wt{\rho}_{\infty,L_{\lambda_0}}^\ast(k)^{-1})\wt{f}(g)$$
by definition of $\wt{f}.$

By definition of $\aA(K,\rho,R_{\lambda_0})$, $f$ lies in this space precisely when
$\wt{f}(x_i)=\wt{\rho}_{\infty,L_{\lambda_0}}^\ast(x_i)^{-1}f(x_i)$ lies in 
$V_{\Sigma,R_{\lambda_0}}^\ast\otimes_{R_{\lambda_0}}
V_{\infty,R_{\lambda_0}}^\ast$ for all $i$ ($1\le i\le h$). 
Thus we have an isomorphism
$$\aA(K,\rho,R_{\lambda_0})=\oplus_{1\le i\le h}(\wt{\rho}_{\infty,R_{\lambda_0}}^\ast(x_i)
[V_{\Sigma,R_{\lambda_0}}^\ast\otimes_{R_{\lambda_0}}
V_{\infty,R_{\lambda_0}}^\ast])^{\Delta_i}.$$

Each of the $R_{\lambda_0}$-modules on the right is a 
lattice in the $L_{\lambda_0}$-vector
space $(V_{\Sigma,L_{\lambda_0}}^\ast\otimes_{L_{\lambda_0}}
V_{\infty,L_{\lambda_0}}^\ast)^{\Delta_i}$.
It remains to show that the restriction of the Hermitian product from
$\aA(K,\rho,L_{\lambda_0})$ to $\aA(K,\rho,R_{\lambda_0})$ 
is $R_{\lambda_0}$-valued and 
nondegenerate. But this is explained in the paragraph before the lemma.
Indeed, this Hermitian product is a direct sum, weighted by invertible
elements of $R_{\lambda_0}$, of Hermitian products which are nondegenerate
and $R_{\lambda_0}$-valued.
\end{proof}

For a place ${\lambda_0}$ of $L_0$ prime to $N$, and commutative  
$R_{\lambda_0}$-algebra $A$ with a unit, put 
$V_{\Sigma,A}=V_{\Sigma,R_{\lambda_0}}\otimes_{R_{\lambda_0}}A$,
$\rho_{\Sigma,A}=\rho_{\Sigma,R_{\lambda_0}}\otimes_{R_{\lambda_0}}A$, and
similarly $V_{\infty,A}$, $\rho_{\infty,A}$. Define $\aA(K,\rho,A)$ 
to be the $A$-module of smooth functions $\wt{f}:G(\A^\infty)\to
V_{\Sigma,A}^\ast\otimes_AV_{\infty,A}^\ast$ satisfying, for 
$g\in G(\A^\infty)$, $u\in G(F)$, $k\in K'{}'$,
$$\wt{f}(ugk)=[\rho_{\Sigma,A}^\ast(k)^{-1}
\otimes_A\wt{\rho}_{\infty,A}^\ast(k)^{-1}]\wt{f}(g).$$
The $Z(\HH_{K^{\lambda_0}})$-module $\aA(K,\rho,A)$ commutes
with base change $A\to A'$. For $A=R_{\lambda_0}$, $\aA(K,\rho,A)$ is 
isomorphic to $\aA(K,\rho,R_{\lambda_0})$ previously defined, by 
$\wt{f}\mapsto f$, $\wt{f}(g)
=\wt{\rho}_{\infty,L_{\lambda_0}}^\ast(g)^{-1}f(g)$.
Lemma 5.6 implies that any character of $Z(\HH_{K^{\lambda_0}})$ on 
$\aA(K,\rho,R_{\lambda_0})$ is $R_{\lambda_0}$-valued for almost all 
places $\lambda$ of $L$.

Let $\T_{K^{\lambda_0},A}$
denote the image of the center $Z(\HH_{K^{\lambda_0},A})$ in $\End_A
\aA(K,\rho,A)$. Hence $\T_{K^{\lambda_0},A}$ is a commutative $A$-algebra.
Now, suppose $J \subset K$ is a (proper) compact open subgroup.
Then $\aA(K,\rho,A) \subset \aA(J,\rho,A)$, and the canonical
homomorphism $Z(\HH_{J^{\lambda_0},A}) \to
Z(\HH_{K^{\lambda_0},A})$ descends to the restriction map
$\T_{J^{\lambda_0},A}\to \T_{K^{\lambda_0},A}$.

\section{Pairings}

We review now the pairing on $\aA(K,\rho,A)$. Here $(-,-)$
denotes the inner product on $V_{\Sigma,R_{\lambda_0}}^\ast
\otimes V_{\infty,R_{\lambda_0}}^\ast$.

\begin{dfn}
For $f, g \in \aA(K,\rho,A)$, define a symmetric bilinear form by
$$
\la f,g \ra_K=\mu(K'{}')^{-1}(f,\ov{g})=\sum_{x \in X_K}
(f(x),\ov g(x))|G(F)\cap {^xK'{}'}|^{-1}.
$$
Here ${^xK'{}'}=xK'{}'x^{-1}$ and $X_K=G(F)\bs G(\A^\infty)/K'{}'$.
\end{dfn}

The factors $|G(F)\cap {^xK'{}'}|^{-1}$ are missing in [T1] and
[T2]. If $K$ is sufficiently small, for example if $K=\prod_{v \notin
\Sigma\cup\infty}K_v$ and $K_v$ is torsion-free for some $v \notin
\Sigma\cup\infty$ (this is the case if
$K_v$ is a sufficiently deep principal congruence subgroup), then
indeed $G(F)\cap {^xK'{}'}=1$. For $\phi \in \HH_K$ and $f, g
\in \aA(K,\rho,A)$ we have the compatibility relation
$$
\la r(\phi)f,g \ra_K=\la f, r(\phi^{\vee})g \ra_K.
$$

Next we have to show that the quotient
$\aA(K,\rho,R_{\lambda_0})/\aA(K,\rho,R_{\lambda_0})^{\vee}$ 
is torsion and find a good annihilator $A_K$. The fact that it is torsion is
immediate: it is killed by the positive integer
$$
\prod_{x \in X_K}|G(F)\cap {^xK'{}'}|.
$$
This is $1$ if $K$ is sufficiently small in the sense above.

\begin{lem}
Let $K=\prod_{v\notin\Sigma}K_v \subset G(\A^{\Sigma})$ be a
decomposable compact open subgroup. Let $\ell$ be the residual
characteristic of $\lambda$. Suppose $\ell \nmid |K_v|$ for some 
$v \notin\Sigma\cup \infty$. Then there exists a positive integer 
$A_K$, not divisible by $\ell$, such that
$$
A_K \la \aA(K,\rho,R_{\lambda_0}),\aA(K,\rho,R_{\lambda_0}) 
\ra_K \subset R_{\lambda_0}.
$$
\end{lem}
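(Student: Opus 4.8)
The plan is to exploit the explicit formula for the pairing given just before the lemma, namely
$$
\la f,g \ra_K=\sum_{x \in X_K}(f(x),\ov g(x))\,|G(F)\cap {^xK'{}'}|^{-1},
$$
and to show that the only denominators that can appear after pairing two elements of $\aA(K,\rho,R_{\lambda_0})$ come from the orders $|G(F)\cap {^xK'{}'}|$, which are prime to $\ell$ under our hypothesis. First I would recall from Section 5 that after increasing $N$ the inner product $(-,-)$ on $V_{\Sigma,R_{\lambda_0}}^\ast\otimes V_{\infty,R_{\lambda_0}}^\ast$ restricted to each $\Delta_i$-invariant sublattice is nondegenerate and $R_{\lambda_0}$-valued; so on the integral structure $\aA(K,\rho,R_{\lambda_0})\simeq\oplus_i(\wt{\rho}_{\infty,R_{\lambda_0}}^\ast(x_i)[V_{\Sigma,R_{\lambda_0}}^\ast\otimes V_{\infty,R_{\lambda_0}}^\ast])^{\Delta_i}$ the term $(f(x_i),\ov g(x_i))$ already lies in $R_{\lambda_0}$. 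Hence $\la f,g\ra_K$ lies in $R_{\lambda_0}[1/M]$ where $M=\prod_{x\in X_K}|G(F)\cap{^xK'{}'}|$, and it suffices to bound the $\lambda_0$-valuation of $M$, equivalently to show $\ell\nmid|G(F)\cap{^xK'{}'}|$ for every $x$.

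The key arithmetic step is the bound on the orders of the finite groups $\Delta_x=G(F)\cap xK'{}'x^{-1}$. Here I would use the hypothesis that $\ell\nmid|K_v|$ for some finite place $v\notin\Sigma\cup\infty$: the group $\Delta_x$ is finite (shown in Section 5 from cocompactness), and its image under projection to $G_v$ lands inside the conjugate $x_vK_vx_v^{-1}$, which has order prime to $\ell$ (conjugation does not change the order; and I should note here that "$|K_v|$" means the order of the finite group $K_v/K_v'$ for a normal pro-$\ell'$... more precisely the supernatural order, so that $\ell\nmid|K_v|$ says $K_v$ has no element of order $\ell$, equivalently its pro-order is prime to $\ell$). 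Since $\Delta_x$ injects into $G_v$ (as $G(F)\hookrightarrow G(F_v)$) and its image lies in a group of order prime to $\ell$, $\Delta_x$ itself has order prime to $\ell$. Therefore each factor $|G(F)\cap{^xK'{}'}|$ in $M$ is prime to $\ell$, so $M$ is prime to $\ell$, and we may take $A_K=M=\prod_{x\in X_K}|G(F)\cap{^xK'{}'}|$ (or its $\ell'$-part, which equals $M$). That $A_K$ annihilates $\aA(K,\rho,R_{\lambda_0})/\aA(K,\rho,R_{\lambda_0})^\vee$ was already observed in the displayed remark just before the lemma; clearing the single denominator $M$ in the formula for $\la f,g\ra_K$ shows $A_K\la\aA(K,\rho,R_{\lambda_0}),\aA(K,\rho,R_{\lambda_0})\ra_K\subset R_{\lambda_0}$, which is exactly the claim.

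The main obstacle, such as it is, is purely bookkeeping: one must be careful that the integrality of $(f(x_i),\ov g(x_i))$ genuinely holds on the integral models, i.e. that the lattices $\wt{\rho}_{\infty,R_{\lambda_0}}^\ast(x_i)[V_{\Sigma,R_{\lambda_0}}^\ast\otimes V_{\infty,R_{\lambda_0}}^\ast]$ are the ones on which the inner product was arranged to be $R_{\lambda_0}$-valued (Section 5 does this after enlarging $N$, so one should perhaps remark that $\lambda$ is assumed prime to that $N$ throughout — here "some $v\notin\Sigma\cup\infty$" is chosen once and for all, and $N$ absorbs it). One should also make sure $\ell$ does not divide $\mu(K'{}')$ in any hidden way — but in fact the normalization is such that the $K$-normalized pairing removes that factor, which is why the formula for $\la f,g\ra_K$ has only the $|G(F)\cap{^xK'{}'}|^{-1}$ weights and no volume factor. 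Beyond these normalization checks there is no real difficulty: the proof is three lines once the integral pairing from Section 5 and the order bound on $\Delta_x$ are in hand.
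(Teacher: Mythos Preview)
Your argument is correct, but it proceeds differently from the paper's. You take $A_K=\prod_{x\in X_K}|G(F)\cap{}^xK'{}'|$ --- the integer the paper already noted kills $\aA(K,\rho,R_{\lambda_0})/\aA(K,\rho,R_{\lambda_0})^{\vee}$ --- and then argue directly that each factor $|\Delta_x|$ is prime to $\ell$ by embedding the finite group $\Delta_x$ into the conjugate $x_vK_vx_v^{-1}$, whose pro-order is prime to $\ell$ by hypothesis. The paper instead passes to an auxiliary level: it chooses a torsion-free open subgroup $\wt{K}_v\subset K_v$, sets $\wt{K}=\wt{K}_vK^v$, observes that at level $\wt{K}$ all the $\Delta_x$ are trivial so $\la-,-\ra_{\wt{K}}$ is already $R_{\lambda_0}$-valued, and then uses the identity $[K_v:\wt{K}_v]\la f,g\ra_K=\la f,g\ra_{\wt{K}}$ for $f,g\in\aA(K,\rho,R_{\lambda_0})$ to take $A_K=[K_v:\wt{K}_v]$, which divides $|K_v|$ and is hence prime to $\ell$. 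Both routes ultimately rest on the same integrality input from Section~5 (that each $(f(x_i),\ov g(x_i))$ lies in $R_{\lambda_0}$); yours is more direct, while the paper's yields a single uniform constant $A_K=[K_v:\wt{K}_v]$ depending only on the local group $K_v$ and not on the global double-coset decomposition.
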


\begin{proof}
Choose some torsion-free subgroup $\wt{K}_v \subset
K_v$ and let $\wt{K}=\wt{K}_vK^v$. Then
$$
\la \aA(\wt{K},\rho,R_{\lambda_0}),\aA(\wt{K},\rho,R_{\lambda_0})
\ra_{\wt{K}} \subset R_{\lambda_0}
$$
as we have observed above. Thus, for $f, g \in
\aA(K,\rho,R_{\lambda_0}) \subset \aA(\wt{K},\rho,R_{\lambda_0})$, we have
$$
[K_v:\wt{K}_v]\la f,g \ra_K=\la f,g \ra_{\wt{K}} \in R_{\lambda_0}.
$$
We then take $A_K=[K_v:\wt{K}_v]$. This is not divisible by $\ell$. 
\end{proof}

Note that $\ell \nmid |K_v|$ if $K_v$ is torsion-free and $v \nmid
\ell$. For large $\ell$ this is automatic:

\begin{lem}
Given an $F$-embedding $G \hookrightarrow \GL(n)$, a compact open
subgroup $K=\prod_{v\notin\Sigma}K_v$, and a prime number $\ell >
[F:\Q]n+1$, we have $\ell \nmid |K_v|$ for infinitely many places $v$.
\end{lem}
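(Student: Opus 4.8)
The plan is to reduce the statement to a counting fact about $\GL(n,\mathcal{O}_{F_v})$, namely that for any compact open subgroup $K_v$ of $G(F_v)\subset\GL(n,F_v)$, every element of finite order in $K_v$ has order dividing the order of some finite subgroup of $\GL(n,\F_{v})$ — more precisely, that a prime $\ell$ can divide $|K_v|$ only if $\ell$ divides the order of a finite subgroup of $\GL(n,\F_{v^k})$ for a bounded degree $k$. Concretely, first I would observe that $K_v$ lies in a maximal compact subgroup of $\GL(n,F_v)$, conjugate to $\GL(n,\mathcal{O}_{F_v})$, so it suffices to bound the primes dividing the orders of finite subgroups of $\GL(n,\mathcal{O}_{F_v})$. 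Reduction modulo $v$ gives a homomorphism $\GL(n,\mathcal{O}_{F_v})\to\GL(n,\F_v)$ whose kernel is a pro-$p$ group, where $p$ is the residue characteristic; hence if $\ell\neq p$, any $\ell$-torsion element of $K_v$ injects into $\GL(n,\F_v)$, and if $\ell=p$ then $\ell=p\le q_v$, a case we can exclude for all but finitely many $v$ once $\ell$ is fixed. So for $\ell\nmid q_v$ we need: $\ell\mid|\GL(n,\F_v)|$ is impossible for infinitely many $v$ under the hypothesis $\ell>[F:\Q]n+1$.

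The key number-theoretic input is the structure of $|\GL(n,\F_{q})|=q^{n(n-1)/2}\prod_{i=1}^{n}(q^i-1)$ with $q=q_v=p^{f_v}$: a prime $\ell\neq p$ divides this exactly when the multiplicative order $d=\mathrm{ord}_\ell(q)$ satisfies $d\le n$. Thus it is enough to produce infinitely many places $v$ of $F$ with $q_v\equiv 1$ in none of the cyclotomic senses that matter, i.e. with $\mathrm{ord}_\ell(q_v)>n$. Equivalently, writing $q_v$ as the norm $N(v)=p_v^{f_v}$ where $p_v$ is the rational prime below $v$, I want $p_v^{f_v}$ to have order $>n$ modulo $\ell$. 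Since $f_v\le[F:\Q]$ always, if $p_v$ itself has order $>[F:\Q]n$ modulo $\ell$ then $q_v=p_v^{f_v}$ has order $>n$ modulo $\ell$. The element-order bound $[F:\Q]n<\ell-1$, which is exactly the hypothesis $\ell>[F:\Q]n+1$, guarantees that $(\Z/\ell)^\times$, a cyclic group of order $\ell-1$, contains an element of order $>[F:\Q]n$; choosing a rational prime $p\ne\ell$ in the corresponding residue classes mod $\ell$ (infinitely many by Dirichlet), and then any place $v$ of $F$ above $p$, gives $\ell\nmid|\GL(n,\F_v)|$ and, combined with the pro-$p$ kernel argument above, $\ell\nmid|K_v|$. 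There are infinitely many such $v$ since there are infinitely many such $p$.

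I would then assemble these pieces: fix $\ell>[F:\Q]n+1$; by Dirichlet pick infinitely many rational primes $p\neq\ell$ whose class modulo $\ell$ has order exceeding $[F:\Q]n$ in $(\Z/\ell)^\times$ (possible because that group is cyclic of order $\ell-1>[F:\Q]n$); for each such $p$ and each place $v\mid p$ of $F$, we have $q_v=p^{f_v}$ with $1\le f_v\le[F:\Q]$, so the order of $q_v$ modulo $\ell$ is still $>n$, whence $\ell\nmid\prod_{i=1}^n(q_v^i-1)$ and $\ell\neq p=\operatorname{char}\F_v$, so $\ell\nmid|\GL(n,\F_v)|$; finally, since $K_v\hookrightarrow\GL(n,\mathcal{O}_{F_v})$ up to conjugacy and the kernel of reduction is pro-$p$ with $p\neq\ell$, an $\ell$-Sylow of $K_v$ embeds into $\GL(n,\F_v)$, forcing it to be trivial, i.e. $\ell\nmid|K_v|$. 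The main obstacle, and the only genuinely delicate point, is the passage from ``$\ell$ does not divide the order of $\GL(n,\F_v)$'' to ``$\ell$ does not divide $|K_v|$'': one must be careful that $K_v$ need not be contained in a conjugate of $\GL(n,\mathcal{O}_{F_v})$ that is defined over $\mathcal{O}_{F_v}$, but this is harmless since any compact subgroup of $\GL(n,F_v)$ stabilizes a lattice and hence lies in the stabilizer of that lattice, which is $\GL(n,F_v)$-conjugate to $\GL(n,\mathcal{O}_{F_v})$, and conjugation does not change the order of a finite subgroup. The rest is routine.
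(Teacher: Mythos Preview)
Your proof is correct and follows essentially the same route as the paper: embed $K_v$ in a conjugate of $\GL(n,\mathcal{O}_{F_v})$, use that the prime-to-$p$ part of the pro-order is $\prod_{i=1}^n(q_v^i-1)$, reduce to an order condition on $q_v=p^{f_v}$ modulo $\ell$ with $f_v\le[F:\Q]$, and invoke cyclicity of $(\Z/\ell)^\times$ together with Dirichlet. The only cosmetic difference is that the paper argues by contradiction (if $\ell\mid|K_v|$ for almost all $v$ then every rational prime has order $\le[F:\Q]n$ mod $\ell$, forcing $\ell-1\le[F:\Q]n$), whereas you construct the good places directly; your pro-$p$ kernel discussion makes explicit what the paper abbreviates by writing the supernatural order $p^\infty\prod(q^i-1)$.
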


\begin{proof}
The group $K_v$ embeds into a conjugate of
$\GL(n,R_{v})$. Therefore $|K_v|$ divides
$|\GL(n,R_{v})|=p^{\infty}\prod_{i=1}^n(q^i-1).$
Assume $\ell$ divides $|K_v|$ for almost all $v$. Then $p$ has
order at most $[F:\Q]n$ in $(\Z/\ell)^\times$ for almost all primes
$p$. Now, $(\Z/\ell)^\times$ is cyclic of order $\ell-1$, so by
Dirichlet's theorem on primes in arithmetic progressions we
conclude that $\ell \leq [F:\Q]n+1$. 
\end{proof}

\section{Ihara's Lemma}

\subsection{Parahoric Subgroups}

From now on we assume for simplicity that $G^{\der}$ is
simple (that is, it has no nontrivial connected normal subgroups).
Moreover, we fix a compact open subgroup
$K=\prod_{v \notin \Sigma}K_v \subset G(\A^{\Sigma}).$
Then $K_v \subset G_v$ is a hyperspecial maximal
compact subgroup for almost all places $v$, that is,
$K_v=\GG(R_v)$ for a smooth affine group
scheme $\GG$ of finite type over $R_v$ with generic fiber $G$. 
Such $\GG$ exists precisely when $G_v$ is
unramified. Let us look at a fixed finite place $w$ of $F$ where
$K_w$ is hyperspecial. Then write $K=K_wK^w$, where
$$K^w=\prod_{v \neq w}K_v \subset G(\A^{\Sigma,w}).$$
Let $\mathcal{B}_w$ denote the reduced Bruhat-Tits building of
$G_w$ (that is, the building of $G_w^{\ad}$). We have
assumed $G^{\der}$ is simple, so $\mathcal{B}_w$ is a
simplicial complex. Let $x \in \mathcal{B}_w$ be the vertex fixed
by $K_w$. Let $(x,x')$ be an edge in the building. 
Consider the maximal compact subgroup $K_w' \subset G_w$ fixing
the vertex $x'$, and the parahoric subgroup $J_w=K_w \cap K_w'$
associated with the edge $(x,x')$. Let $K'=K_w'K^w$ and $J=J_wK^w$
be the corresponding subgroups of $G(\A^{\Sigma})$.

\begin{lem}
$\la K_w,K_w' \ra=G_w^0:=\{g \in G_w: |\chi(g)|=1, \forall \chi
\in X^*(G)_{F_w}\}$.
\end{lem}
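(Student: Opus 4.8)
The plan is to prove the two inclusions separately. One direction is immediate: $K_w$ and $K_w'$ both fix points of the building $\mathcal{B}_w$, hence each acts on $\mathcal{B}_w$ by isometries preserving the $\mathbb{Z}$-structure (the simplicial distance to a fixed vertex), so every element acts trivially on $X^*(G)_{F_w}\otimes\mathbb{R}$ via the Kottwitz/id\`ele-norm homomorphism; equivalently, $K_w,K_w'\subset G_w^0$ because a compact subgroup of $G_w$ must lie in the kernel of $g\mapsto(|\chi(g)|)_{\chi}$ (the image of a compact group in $\mathbb{R}_+^\times$ is compact, hence trivial). Therefore $\la K_w,K_w'\ra\subset G_w^0$.

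For the reverse inclusion I would argue inside the building. Recall $\mathcal{B}_w$ is the Bruhat--Tits building of $G_w^{\ad}$, which since $G^{\der}$ is simple and simply connected is a thick, connected simplicial complex on which $G_w^0$ acts strongly transitively (type-preservingly transitively on chambers, or at least transitively on vertices of each type). First I would show that $G_w^0$ acts transitively on the set of edges of $\mathcal{B}_w$ of the same ``colour type'' as the edge $(x,x')$: this is standard Bruhat--Tits, using that $G_w^0$ surjects onto the affine Weyl group action and acts transitively on apartments. Hence every edge of that type is of the form $(gx,gx')$ for some $g\in G_w^0$. Next, given any $g\in G_w^0$, I want to connect the vertex $x$ to $gx$ by a gallery. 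Since $\mathcal{B}_w$ is connected, there is a chain of edges $x=x_0, x_1,\dots, x_n=gx$; by type-transitivity each edge $(x_{i-1},x_i)$ is a $G_w^0$-translate of $(x,x')$, and walking along this gallery expresses the element moving $x$ to $gx$ as a product of conjugates of the ``edge stabilizer'' data, i.e.\ of elements of $\la K_w,K_w'\ra$. More precisely, I would use the standard fact that if a group $H$ acts on a connected simplicial complex and $H_0\le H$ is the subgroup generated by the stabilizers of a chamber and all its panels (here $K_w$, together with the stabilizers of the faces through $x$, which are generated together with $K_w'$ by $K_w$ and $K_w'$ along the single edge and its $K_w$-translates), then $H_0$ acts transitively on chambers iff $H_0=H$. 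Since $\la K_w,K_w'\ra$ contains $K_w$ (transitive on the edges at $x$, as $K_w$ acts transitively on the neighbours of $x$ in the building) and contains $K_w'$, it contains enough panel stabilizers along a spanning set of edges to move $x$ anywhere, so it acts transitively on vertices; combined with $K_w=\operatorname{Stab}_{G_w^0}(x)$ this gives $\la K_w,K_w'\ra=G_w^0$.

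The cleanest way to package the last step is via the $BN$-pair / Iwahori--Tits theory: $G_w^0$ admits an affine $BN$-pair with $B=I_w$ an Iwahori subgroup contained in $J_w\subset K_w\cap K_w'$, and $G_w^0=\la I_w, N\ra$ where the relevant reflections are realized by $K_w$ and $K_w'$. Concretely, $K_w$ and $K_w'$ together contain $I_w$ and contain lifts of the fundamental affine reflections $s_0,\dots,s_r$ (the $s_i$ fixing $x$ lie in $K_w$; the one corresponding to the wall opposite $x$ in the edge lies in $K_w'$), and it is a theorem of Bruhat--Tits that $G_w^0$ is generated by $I_w$ and these reflection representatives. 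Hence $G_w^0\subset\la K_w,K_w'\ra$.

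The main obstacle I anticipate is making precise ``$K_w$ and $K_w'$ together contain representatives of all the fundamental affine reflections,'' i.e.\ checking that the edge $(x,x')$ can be chosen so that the wall it crosses corresponds to the affine simple root, and that $K_w'$ genuinely contains a representative of the corresponding reflection rather than merely stabilizing the wall pointwise. This uses the hyperspeciality of $K_w$ (so $x$ is a special vertex and its stabilizer contains representatives of all the \emph{spherical} reflections) together with the fact that $K_w'$, being a \emph{maximal} compact distinct from $K_w$ and fixing an adjacent vertex, supplies exactly the one missing affine reflection. One must also be slightly careful that we land in $G_w^0$ and not a proper finite-index overgroup: since both $K_w$ and $K_w'$ lie in $G_w^0$ by the first paragraph, and $G_w^0$ is exactly the subgroup of $G_w$ acting on $\mathcal{B}_w$ ``without translation part on $X^*(G)$'', the subgroup they generate is automatically inside $G_w^0$, so no index issue arises once the transitivity on vertices is established.
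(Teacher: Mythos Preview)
Your proposal is correct and is exactly the approach the paper intends: the paper's own proof is the single sentence ``This follows from Bruhat--Tits theory,'' and your affine $BN$-pair argument is the standard way to unpack that citation. The key mechanism you isolate---that hyperspeciality of $x$ makes $K_w$ contain lifts of all the finite simple reflections $s_1,\dots,s_r$, while the stabilizer $K_w'$ of any \emph{different} vertex $x'$ of the same chamber contains a lift of the remaining affine reflection $s_0$ (since $\mathrm{Stab}_{W_{\mathrm{aff}}}(x')=\langle s_i:i\neq j\rangle$ for some $j\neq 0$), so that together they generate $I_w$ and all of $W_{\mathrm{aff}}$---is precisely what is needed, and your handling of the containment in $G_w^0$ is correct.
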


\begin{proof}
This follows from Bruhat-Tits theory. 
\end{proof}

Note that $G_w^{\der} \subset G_w^0 \subset G_w^1
=G_w \cap G(\A)^1$.

\subsection{The Concrete Setup}

We apply the general results when $L$ is $L_{\lambda_0}$ and 
$\OO$ is $R_{\lambda_0}$. Let $H=Z(\HH_{J,L_{\lambda_0}})$. This is a 
commutative $L_{\lambda_0}$-algebra. It comes with the involution defined 
by $\phi^{\vee}(x)=\phi(x^{-1})$. The $L_{\lambda_0}$-space 
$V=\aA(J,\rho,L_{\lambda_0})$ is 
finite-dimensional; $Z(\HH_{J,L_{\lambda_0}})$ acts on it.
The order $Z(\HH_{J,R_{\lambda_0}})$ preserves the lattice
$V_{R_{\lambda_0}}=\aA(J,\rho,R_{\lambda_0})$. The space $V$ comes
with the bilinear form $\la -,- \ra_J$. The compatibility
conditions between these data are satisfied. Let $U=\aA(K,\rho,L_{\lambda_0})
\oplus \aA(K',\rho,L_{\lambda_0})$. Then $Z(\HH_{J,L_{\lambda_0}})$ acts on this
space via the natural maps to $Z(\HH_{K,L_{\lambda_0}})$ and
$Z(\HH_{K',L_{\lambda_0}})$. The lattice $U_{R_{\lambda_0}}=
\aA(K,\rho,R_{\lambda_0}) \oplus \aA(K',\rho,R_{\lambda_0})$
is preserved by $Z(\HH_{J,R_{\lambda_0}})$. The bilinear
form on $U$ is given by the sum $\la -,- \ra_K \oplus \la -,-
\ra_{K'}$. The degeneracy map $\delta$ is given by
$$
\delta: \aA(K,\rho,L_{\lambda_0}) \oplus \aA(K',\rho,L_{\lambda_0})
\overset{\operatorname{sum}}{\to} \aA(J,\rho,L_{\lambda_0}),
$$
which is clearly $Z(\HH_{J,L_{\lambda_0}})$-linear. Obviously,
$\ker\delta$ consists of all pairs $(f,-f)$, where
$$
f \in \aA(K,\rho,L_{\lambda_0}) \cap\aA(K',\rho,L_{\lambda_0})
=\{\text{$G_w^0K^w$-invariant functions $f \in\aA$}\}.
$$
The decompositions $U=\ker\delta\oplus(\ker\delta)^{\bot}$ and
$V=\im\delta\oplus(\im\delta)^{\bot}$ are immediate because of
the relation between the pairings and the inner product.

\subsection{Combinatorial Ihara Lemma}

The proof of the following lemma is a straightforward
generalization of [T1, p. 274]. It asserts that the quotient
$V_{R_{\lambda_0}}\cap\delta(U)\supset\delta(U_{R_{\lambda_0}})$
is killed by $C=1$.

\begin{lem}
We have that $\aA(J,\rho,R_{\lambda_0}) \cap 
\delta[\aA(K,\rho,L_{\lambda_0}) \oplus
\aA(K',\rho,L_{\lambda_0})]$ is equal to
$\delta[\aA(K,\rho,R_{\lambda_0}) \oplus
\aA(K',\rho,R_{\lambda_0})]$.
\end{lem}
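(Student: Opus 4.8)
The statement is a ``combinatorial Ihara lemma'' in the style of Taylor [T1, p.~274]: we must show that an integral form $f \in \aA(J,\rho,R_{\lambda_0})$ which happens to lie in the image of $\delta$ on the rational level (i.e. $f = \delta(f_1 \oplus f_2)$ with $f_i$ merely $L_{\lambda_0}$-valued) is already the image under $\delta$ of a \emph{pair of integral} forms. The inclusion $\supseteq$ is trivial since $\delta$ maps $U_{R_{\lambda_0}}$ into $V_{R_{\lambda_0}}$ and into $\delta(U)$. So the content is the inclusion $\subseteq$, and the plan is to make the implicit double-coset combinatorics explicit and then exploit that $K_w' \neq K_w$ generate $G_w^0$ (Lemma 7.1).

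\textbf{Key steps.} First I would translate everything through the isomorphism $f \mapsto (f(x_i))_i$ of Section~5: a form of level $J$ is a tuple of $\Delta_i$-invariant vectors indexed by the double cosets $X_J = G(F)\bs G(\A^\infty)/J''$, and the degeneracy map $\delta = \operatorname{sum}$ adds the pullbacks of the two coarser systems indexed by $X_K$ and $X_{K'}$ via the (surjective) projections $X_J \to X_K$ and $X_J \to X_{K'}$. Concretely, fibering over $X_{G_w^0 K^w}$ (the double cosets for the group $G_w^0 K^w$ generated by $K$ and $K'$, using Lemma~7.1), the structure over $w$ reduces to the local combinatorics on the building $\mathcal B_w$: over a fixed component the indexing set for level $J$ is the set of oriented edges $(y,y')$ with $y$ a ``$K_w$-type'' vertex and $y'$ a ``$K_w'$-type'' vertex in a single $G_w^0$-orbit, the map to the $X_K$-part records the source $y$ and the map to the $X_{K'}$-part records the target $y'$. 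Second, given $f$ integral on all oriented edges and of the form ``value at edge $(y,y') = a(y) + b(y')$'' for some $L_{\lambda_0}$-valued functions $a,b$ on vertices, I would show one can modify $(a,b)$ by a constant (subtract $c$ from $a$, add $c$ to $b$ on each connected component — this leaves $\delta(a\oplus b)$ unchanged, exactly the $\ker\delta$ description $(h,-h)$ in Section~7.2) so that $a$ becomes integral; connectedness of the relevant graph — equivalently $\langle K_w, K_w'\rangle = G_w^0$ from Lemma~7.1, which guarantees the bipartite adjacency graph on vertices is connected on each fiber component — then forces $b$ to be integral as well, since $b(y') = f(y,y') - a(y)$ for any edge at $y'$. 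Third, repackage the resulting integral $(a,b)$ as an element of $\aA(K,\rho,R_{\lambda_0}) \oplus \aA(K',\rho,R_{\lambda_0})$ mapping to $f$, checking $\Delta_i$-equivariance is preserved (it is, because the modification is by a $G(F)$-invariant constant on each component). One must also handle the weight/type twist $\rho_\Sigma \otimes \wt\rho_{\infty}$ uniformly: since $\wt\rho_{\infty, L_{\lambda_0}}$ and $\rho_{\Sigma}$ both preserve the chosen $R_{\lambda_0}$-lattices (Section~5), the twist is an isometry on the integral structures and the argument goes through coefficient-wise on $V_{\Sigma,R_{\lambda_0}}^\ast \otimes V_{\infty,R_{\lambda_0}}^\ast$.

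\textbf{Main obstacle.} The real point — and the only place the hypotheses genuinely enter — is the connectedness step: one needs that on each fiber over $X_{G_w^0 K^w}$ the bipartite ``source--target'' graph on $(K_w\text{-vertices}) \sqcup (K_w'\text{-vertices})$ is connected, so that fixing $a$ integrally at one vertex propagates to integrality of $a$ and $b$ everywhere. This is precisely the translation of $\langle K_w, K_w' \rangle = G_w^0$ (Lemma~7.1, Bruhat--Tits), since $G_w^0$ acts transitively on each such $(y,y')$-orbit while $K_w$ (resp. $K_w'$) is the stabilizer of a source vertex $y$ (resp. target $y'$); walking along edges is multiplying by generators. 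Everything else — the bookkeeping with $\Delta_i$, the passage between $f'$ on $G(\A)$ and $f$ on $G(\A^\infty)$, the coefficient lattices — is routine and parallels Taylor [T1, p.~274] verbatim in the diagonal-weight-$2$ case, now carried out with the nontrivial weight $\rho_\infty$ and type $\rho_\Sigma$ absorbed into the coefficient module. So I expect the write-up to be short: set up the indexing, reduce to the local graph, invoke Lemma~7.1 for connectedness, and conclude $C=1$.
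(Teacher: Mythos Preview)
Your proposal is correct and takes essentially the same approach as the paper: both set up the projections $X_J \to X_K$ and $X_J \to X_{K'}$, work on connected components (the paper calls them ``equivalence classes $X_J^j$'' and phrases the walk as induction on a ``radius function''), normalize by a constant on each component so that $f$ vanishes at a chosen basepoint, and then propagate integrality along chains. One small simplification worth noting: the paper's proof does not actually invoke Lemma~7.1 --- the induction works on whatever the connected components happen to be, so identifying them with fibers over $X_{G_w^0 K^w}$ (which is what $\langle K_w,K_w'\rangle = G_w^0$ buys you) is not needed for this lemma, and your ``main obstacle'' is in fact no obstacle at all.
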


\begin{proof}
Let us first set up some machinery for the proof. There are natural
projections $\pi:X_J=G(F)\bs G(\A^\infty)/J'{}'\to X_K$ and $\pi:X_J\to X_{K'}$.
We define an equivalence relation on $X_J$ by saying that $x,y \in
X_J$ are equivalent ($x \sim y$) iff there exists a chain $x=x_0,\ldots,x_d=y$ 
such that $\forall i$: $\pi(x_i)=\pi(x_{i+1})$ or $\pi'(x_i)=\pi'(x_{i+1})$.
This gives a partition of $X_J$ into equivalence classes $X_J^j$.
For each $j$, we fix a representative $y^j \in X_J^j$.
Correspondingly, we have a radius function $d:X_J \to
\Z_{\geq 0}$ defined as follows. Given $x \in X_J$, there is a
unique $j$ such that $x \sim y^j$. Then $d(x)$ is the minimal
length of a chain connecting $x$ to $y^j$. Now, suppose
$g=\delta(f,f') \in \aA(J,\rho,R_{\lambda_0})$ for some $f \in
\aA(K,\rho,L_{\lambda_0})$ and $f' \in \aA(K',\rho,L_{\lambda_0})$. 
We want to show $g\in \delta(\aA(K,\rho,R_{\lambda_0}) \oplus
\aA(K',\rho,R_{\lambda_0}))$.

We claim that we may assume that $f(\pi(y^j))=0$ for all $j$.
To see this, note that $X_K=\sqcup \pi(X_J^j)$ and 
$X_{K'}=\sqcup\pi'(X_J^j)$. We then define 
$\wt{f} \in \aA(K,\rho,L_{\lambda_0})$ such
that $\wt{f}|\pi(X_J^j)\equiv f(\pi(y^j))$, and 
$\wt{f}' \in\aA(K',\rho,L_{\lambda_0})$ such that 
$\wt{f}'|\pi'(X_J^j)\equiv f(\pi(y^j))$. Then
$$
g=\delta(f-\wt{f},f'+\wt{f}')
$$
and $(f-\wt{f})(\pi(y^j))=0$ for all $j$. This proves the claim.

From now on assume that $f(\pi(y^j))=0$ for all $j$. 
We claim, for every $m\ge 0$, that for every $x \in X_J$ with $d(x)=m$
we have that $f(\pi(x)) \in R_{\lambda_0}$ and $f'(\pi'(x)) \in R_{\lambda_0}$.
We prove this by induction on $m \geq 0$. The case $m=0$ is 
essentially just our assumption. Assume the statement is true for 
$m-1 \geq 0$. Consider $x \in X_J$ with $d(x)=m$. Let 
$x=x_0,x_1,\ldots,x_m=y^j$ be a chain of minimal length. 
Then $x'=x_1 \in X_J$ has $d(x')=m-1$, so by induction 
$f(\pi(x')) \in R_{\lambda_0}$ and $f'(\pi'(x')) \in R_{\lambda_0}$. 
However,  $\pi(x)=\pi(x')$ or $\pi'(x)=\pi'(x')$. In either case we get the 
statement for $x$. This proves the lemma, for then 
$f \in \aA(K,\rho,R_{\lambda_0})$ and 
$f' \in \aA(K',\rho,R_{\lambda_0})$. Note that $f(\pi(x))
\in R_{\lambda_0}$ if and only if $f'(\pi'(x)) \in R_{\lambda_0}$. 
\end{proof}

\section{Applying the Abstract Theory}

\subsection{Computing $\delta^{\vee}\delta$}

To apply the abstract theory it is necessary to compute
$\delta^{\vee}\delta$ explicitly.

\begin{lem} The endomorphism $\delta^{\vee}\delta$ is given by the
$2\times 2$ matrix
$$
\delta^{\vee}\delta=\begin{pmatrix}[K:J] & [K:J]e_K \\
[K':J]e_{K'} & [K':J] \end{pmatrix}.
$$
\end{lem}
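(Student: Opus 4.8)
The plan is to identify $\delta$ with the sum of the two inclusions $\iota_K\colon\aA(K,\rho,L_{\lambda_0})\into\aA(J,\rho,L_{\lambda_0})$ and $\iota_{K'}\colon\aA(K',\rho,L_{\lambda_0})\into\aA(J,\rho,L_{\lambda_0})$, so that $\delta(f,f')=\iota_Kf+\iota_{K'}f'$ and $\delta^\vee v=(\iota_K^\vee v,\iota_{K'}^\vee v)$. Then $\delta^\vee\delta$ is the block matrix with entries $\iota_K^\vee\iota_K$, $\iota_K^\vee\iota_{K'}$, $\iota_{K'}^\vee\iota_K$, $\iota_{K'}^\vee\iota_{K'}$, where in the matrix of the statement $e_K$ means the operator $r(e_K)$ (projection onto $K$-invariants, which on the $\aA(K',\rho)$-summand lands in $\aA(K,\rho)$), and $[K:J]=[K_w:J_w]$ since $K$ and $J$ differ only at $w$. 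So it suffices to compute the adjoint $\iota_K^\vee\colon\aA(J,\rho,L_{\lambda_0})\to\aA(K,\rho,L_{\lambda_0})$ of the inclusion with respect to $\la-,-\ra_K$ and $\la-,-\ra_J$, and likewise $\iota_{K'}^\vee$.

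The key step is the adjunction formula: for $\phi\in\aA(K,\rho,L_{\lambda_0})$ and $\psi\in\aA(J,\rho,L_{\lambda_0})$,
$$\la\phi,\psi\ra_J=[K_w:J_w]\,\la\phi,r(e_K)\psi\ra_K,$$
equivalently $\iota_K^\vee=[K_w:J_w]\,r(e_K)$ (and the analogue with $K'$). To prove it I would use the projection $\pi\colon X_J\to X_K$ and expand $\la\phi,\psi\ra_J=\sum_{y\in X_J}(\phi(y),\overline{\psi(y)})\,|G(F)\cap{}^yJ'{}'|^{-1}$ fibre by fibre over $X_K$. Fixing $x\in X_K$ with representative $g$ and setting $\Gamma_x=G(F)\cap{}^gK'{}'$, the fibre $\pi^{-1}(x)$ is parametrized by $(g^{-1}\Gamma_xg)\bs K'{}'/J'{}'$, a point $y$ corresponding to a coset $kJ'{}'$ being $y=G(F)gkJ'{}'$ with $\Gamma_y=G(F)\cap{}^{gk}J'{}'$. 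The crucial computation is that the stabilizer of $kJ'{}'$ in $g^{-1}\Gamma_xg$ is $g^{-1}\Gamma_xg\cap kJ'{}'k^{-1}$, which after conjugation by $g$ becomes $\Gamma_x\cap{}^{gk}J'{}'=G(F)\cap{}^gK'{}'\cap{}^{gk}J'{}'=\Gamma_y$, because $k\in K'{}'$ and $J'{}'\subset K'{}'$ force ${}^{gk}J'{}'\subset{}^gK'{}'$. Hence each $\Gamma_x$-orbit of cosets has $|\Gamma_x|/|\Gamma_y|$ elements; since $\phi$ is $K$-invariant ($\phi(y)=\phi(x)$) and $\psi$ is constant on each such orbit (left $G(F)$-invariance), summing over the fibre gives $\sum_{y\in\pi^{-1}(x)}\psi(y)|\Gamma_y|^{-1}=|\Gamma_x|^{-1}\sum_{kJ'{}'\in K'{}'/J'{}'}\psi(gk)=|\Gamma_x|^{-1}[K_w:J_w]\,(r(e_K)\psi)(x)$, using that $r(e_K)\psi$ is the $K'{}'/J'{}'$-average of $\psi$ and $K'{}'/J'{}'\cong K_w/J_w$. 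Plugging this back in, and noting that the rationals $|\Gamma_x|^{-1}$ are fixed by the involution, yields the adjunction formula. (In the torsion-free case $G(F)\cap{}^xK'{}'=1$ for all $x$ this step is vacuous and the formula is immediate from counting the fibres of $X_J\to X_K$.)

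Granting this, the lemma follows at once: $\iota_K^\vee\iota_K=[K_w:J_w]\,r(e_K)|_{\aA(K,\rho)}=[K_w:J_w]\cdot\mathrm{id}$ since $r(e_K)$ is the identity on $K$-invariants; $\iota_K^\vee\iota_{K'}=[K_w:J_w]\,r(e_K)|_{\aA(K',\rho)}=[K_w:J_w]\,e_K$; and symmetrically $\iota_{K'}^\vee\iota_{K'}=[K'_w:J_w]\cdot\mathrm{id}$, $\iota_{K'}^\vee\iota_K=[K'_w:J_w]\,e_{K'}$. Equivalently, to avoid naming $\iota_K^\vee$ one may verify the identity through the nondegenerate pairing on $U$: for $(f,f'),(h,h')\in U$ one has $\la\delta^\vee\delta(f,f'),(h,h')\ra_U=\la f+f',h+h'\ra_J$, and expanding the four terms via the adjunction formula (using $r(e_K)h=h$, $r(e_{K'})h'=h'$ and the symmetry of the forms) reproduces $\la M(f,f'),(h,h')\ra_U$ for $M$ the claimed matrix; nondegeneracy of $\la-,-\ra_U$ then gives $\delta^\vee\delta=M$.

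The main obstacle is precisely the bookkeeping of the stabilizer weights in the adjunction formula — the identification of the stabilizer of $kJ'{}'$ in $\Gamma_x$ with $\Gamma_y$ — which is what makes the weighted pairings behave under restriction. Once the fibres of $X_J\to X_K$ are correctly weighted this way, the statement collapses to the purely local index $[K_w:J_w]$ (resp. $[K'_w:J_w]$) together with the averaging operators $r(e_K)$ and $r(e_{K'})$, and the matrix form is forced.
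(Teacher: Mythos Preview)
Your proof is correct and follows the same strategy as the paper: identify $\delta^\vee\delta$ through the defining adjoint relation $\la\delta u,\delta u'\ra_J=\la u,\delta^\vee\delta u'\ra_U$, which reduces to the adjunction formula $\la\phi,\psi\ra_J=[K:J]\,\la\phi,r(e_K)\psi\ra_K$ for $\phi\in\aA(K,\rho)$, $\psi\in\aA(J,\rho)$ (and its $K'$-analogue). The paper's argument is the same in outline but shorter: rather than summing over fibres of $X_J\to X_K$ and tracking the stabilizer weights, it uses the first expression in Definition~6.1, $\la f,g\ra_K=\mu(K'{}')^{-1}(f,\ov g)$, from which the index $[K:J]=\mu(K'{}')/\mu(J'{}')$ falls out immediately (this identity was already invoked in the proof of Lemma~6.2). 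Combined with the self-adjointness of $r(e_K)$ and $r(e_K)f_1=f_1$, this yields your adjunction formula in one line, so the fibre-by-fibre bookkeeping with $\Gamma_x$, $\Gamma_y$ is not needed here. Your alternative verification via nondegeneracy of $\la-,-\ra_U$ is exactly how the paper organizes the final step.
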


\begin{proof}
Put $k=[K:J]$ and $k'=[K':J]$. Recall that 
$\delta\begin{pmatrix}\phi_1^K\\ \phi_2^{K'}\end{pmatrix}
=\phi_1^K+\phi_2^{K'}\in\aA(K',\rho,L_{\lambda_0})$. Also,
$\delta^\vee\phi^J=\begin{pmatrix}ke_K\phi^J+ke_Ke_{K'}\phi^J\\
k'e_{K'}e_K\phi^J+k'e_{K'}\phi^J\end{pmatrix}$.

Let us write the endomorphism $\delta^{\vee}\delta$ of
$\begin{pmatrix}\aA(K,\rho,L_{\lambda_0}) \\ 
\aA(K',\rho,L_{\lambda_0})\end{pmatrix}$ as
$\delta^{\vee}\delta=\begin{pmatrix}a & b \\ c & d\end{pmatrix},$
where $b: \aA(K',\rho,L_{\lambda_0}) \to \aA(K,\rho,L_{\lambda_0})$ 
and so on. Then  
$$\la f_1+f_2,g_1+g_2\ra_J=\bigg\la\begin{pmatrix}f_1\\ f_2\end{pmatrix},
\begin{pmatrix}a & b \\ c & d\end{pmatrix}
\begin{pmatrix}g_1\\ g_2\end{pmatrix}\bigg\ra_J$$
$$=\bigg\la\begin{pmatrix}f_1\\ f_2\end{pmatrix},
\begin{pmatrix}ag_1+bg_2 \\  cg_1+dg_2\ra_J \end{pmatrix}\bigg\ra_J
=\la f_1,ag_1+bg_2\ra_K+\la f_2,cg_1+dg_2\ra_{K'}.$$
Taking $g_2=0=f_2$ we get $ag_1=kg_1$; with $g_1=0=f_2$, we get
$bg_2=ke_Kg_2$; with $f_1=0=g_2$ we get $cg_1=k'e_{K'}g_1$;
with $f_1=0=g_1$ we get $dg_2=k'g_2$.
\end{proof}

\subsection{The Main Lemma}

In our situation, Corollary 2.1 gives the following crucial lemma.

\begin{lem}
Let $f \in \aA(K,\rho,R_{\lambda_0})$ be an eigenform for
$Z(\HH_{K,R_{\lambda_0}})$ with character
$\eta_f:\T_{K,R_{\lambda_0}} \to R_{\lambda_0}$. Let $\ell$ be the
residual characteristic of $\lambda_0$. Suppose there exist at least two 
places $v$ such that $\ell \nmid |K_v|$. Assume that there exists
a place $w$ of $F$ such that $f$ modulo $\lambda_0$ is not 
$G_w^0$-invariant. Consider
$e_{K,K'}=[K:J][K':J]_K(e_K \ast e_{K'}\ast e_K) \in Z(\HH_{K,\Z})$
where $[K':J]_K=k'/(k',k)$ in the notation of the proof of Lemma 8.1.
Suppose there is $n$ with
$$0<n\le v_{\lambda_0}(\eta_f(e_{K,K'})-[K:J][K':J]_K)-v_{\lambda_0}([K':J]_K).$$
Then the reduction of $\eta_f\circ\ast e_K$ modulo $\lambda_0^n$ 
factors through $\T_{J,R_{\lambda_0}}^{\new}$. 
\end{lem}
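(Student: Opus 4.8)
The plan is to deduce the statement from Corollary 2.1, applied to the concrete triple of Section 7.2 (so $\OO=R_{\lambda_0}$ and $L=L_{\lambda_0}$), for a well-chosen eigenvector. Write $k=[K:J]$, $k'=[K':J]$ and $k'_K=[K':J]_K=k'/(k,k')$, as in the proof of Lemma 8.1, and set
$$u=(k'f,\ -k'e_{K'}f)\in U_{R_{\lambda_0}}=\aA(K,\rho,R_{\lambda_0})\oplus\aA(K',\rho,R_{\lambda_0}).$$
Both components lie in the stated lattices, since $k'e_{K'}=\mu(J)^{-1}\chi_{K'}$ lies in $\HH_{J,\Z}$ and hence preserves them. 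Because every element of $Z(\HH_J)$ commutes inside $\HH_J$ with $e_K$ and with $e_{K'}$, convolution by $e_{K'}$ carries $Z(\HH_{J,R_{\lambda_0}})$-eigenvectors to eigenvectors with the same character; and $f$ is such an eigenvector with character $\eta:=\eta_f\circ(\ast e_K)$ (the pullback of $\eta_f$ along $Z(\HH_J)\to Z(\HH_K)$, using $e_Kf=f$), hence so is $e_{K'}f$. Thus $u$ is a nonzero $Z(\HH_{J,R_{\lambda_0}})$-eigenvector with character $\eta$.

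Next I would compute $\delta^{\vee}\delta(u)$ from the matrix of Lemma 8.1. Since $f$ is $K$-fixed, $(e_K\ast e_{K'}\ast e_K)f=(e_K\ast e_{K'})f$, and since $e_{K,K'}=kk'_K(e_K\ast e_{K'}\ast e_K)\in Z(\HH_{K,\Z})$ acts on the eigenform $f$ by $\eta_f(e_{K,K'})$, this gives $(e_K\ast e_{K'})f=\bigl(\eta_f(e_{K,K'})/(kk'_K)\bigr)f$. Feeding this and $k'/k'_K=(k,k')$ into Lemma 8.1, the second component of $\delta^{\vee}\delta(u)$ vanishes and the first equals $-(k,k')(\eta_f(e_{K,K'})-kk'_K)f$, so
$$\delta^{\vee}\delta(u)=m\cdot(f,0),\qquad m:=-(k,k')\bigl(\eta_f(e_{K,K'})-kk'_K\bigr),$$
and in particular $\delta^{\vee}\delta(u)\in mU_{R_{\lambda_0}}$. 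The hypothesis that $f$ is not $G_w^0$-invariant mod $\lambda_0$ is used twice here: it forces $\bar f\neq0$, so $f$ is a primitive vector of $\aA(J,\rho,R_{\lambda_0})$; and it forces $f$ not to be $K'_w$-fixed even over $L_{\lambda_0}$, so $e_{K'}f\neq f$, whence $\delta(u)=k'(f-e_{K'}f)\neq0$, i.e.\ $u\notin\ker\delta$, and therefore $m\neq0$.

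It remains to control the constants $E=A_UB_VC^2$ and $\EE$ of Corollary 2.1. By the combinatorial Ihara Lemma 7.3, $\delta(U_{R_{\lambda_0}})=\aA(J,\rho,R_{\lambda_0})\cap V^{\old}$, i.e.\ $C=1$. The assumption that $\ell\nmid|K_v|$ for at least two finite places $v$ provides one such $v\neq w$; at that $v$ one has $K_v=K'_v=J_v$, so Lemma 6.2 (together with the nondegeneracy in Lemma 5.6) permits choosing $A_U$ and $B_V$ prime to $\ell$, whence $v_{\lambda_0}(E)=0$. For $\EE$: since $u\notin\ker\delta$ and $\delta$ maps $U_{R_{\lambda_0}}$ onto $V_{R_{\lambda_0}}^{\old}:=\aA(J,\rho,R_{\lambda_0})\cap V^{\old}$ (Lemma 7.3), with $V_{R_{\lambda_0}}^{\old}$ saturated in $V^{\old}$, the ideal $\EE$ equals the $\lambda_0$-adic content of $\delta(u)=k'(f-e_{K'}f)$ in $V_{R_{\lambda_0}}^{\old}$; thus $v_{\lambda_0}(\EE)=v_{\lambda_0}(k')+(\text{content of }f-e_{K'}f)$. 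If $f-e_{K'}f$ is not integral, its content is negative; if it is integral, then $\overline{f-e_{K'}f}\neq0$ — otherwise $\bar f=\overline{e_{K'}f}$ would be $K'_w$-fixed, contradicting the hypothesis — so its content is $0$. In either case $v_{\lambda_0}(\EE)\le v_{\lambda_0}(k')$.

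Combining these, $n_0:=v_{\lambda_0}(m)-v_{\lambda_0}(E)-v_{\lambda_0}(\EE)\ge v_{\lambda_0}((k,k'))+v_{\lambda_0}(\eta_f(e_{K,K'})-kk'_K)-v_{\lambda_0}(k')$, which, since $(k,k')\,k'_K=k'$, equals $v_{\lambda_0}(\eta_f(e_{K,K'})-kk'_K)-v_{\lambda_0}(k'_K)$. So the $n$ of the hypothesis obeys $0<n\le n_0$, and Corollary 2.1 (in the form of the Remark following it) yields a homomorphism $\T_{J,R_{\lambda_0}}^{\new}\to R_{\lambda_0}/\lambda_0^n$ induced by $\eta$; that is, the reduction of $\eta_f\circ(\ast e_K)$ modulo $\lambda_0^n$ factors through $\T_{J,R_{\lambda_0}}^{\new}$, as required. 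The only genuinely delicate point is expected to be the $\EE$-estimate, i.e.\ seeing precisely why the failure of $G_w^0$-invariance of $\bar f$ keeps $\delta(u)$ from being divisible by more than $\lambda_0^{v_{\lambda_0}(k')}$ in $V_{R_{\lambda_0}}^{\old}$; the rest is bookkeeping with Lemmas 5.6, 6.2, 7.3 and 8.1.
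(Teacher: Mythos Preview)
Your proof is correct and follows essentially the same route as the paper's: build an eigenvector in $U_{R_{\lambda_0}}$ from $f$, compute $\delta^\vee\delta$ on it via Lemma 8.1, kill $E=A_UB_VC^2$ with Lemmas 6.2 and 7.2 (using a place $v\neq w$ with $\ell\nmid|K_v|$), bound $\EE$ using that $\bar f$ is not $G_w^0$-invariant, and invoke Corollary 2.1. The only differences are normalizations and bookkeeping: the paper scales its eigenvector by $k'_K$ rather than your $k'$ (so its $m$ lacks your factor $(k,k')$), and it bounds $\EE$ via the first-coordinate fractional ideal $\FF=\{x\in L:xf\in\aA(K,\rho,R_{\lambda_0})+\aA(K,\rho,L_{\lambda_0})\cap\aA(K',\rho,L_{\lambda_0})\}$ rather than through the content of $\delta(u)$ in $V^{\old}_{R_{\lambda_0}}$; your argument via Ihara is equally valid and arguably more transparent, and both give the same final inequality.
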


\begin{proof}
(1) First we produce an eigenvector for $Z(\HH_{J,R_{\lambda_0}})$
in\hb $U_{R_{\lambda_0}}$ $=\aA(K,\rho,R_{\lambda_0}) \oplus 
\aA(K',\rho,R_{\lambda_0})$. For that we take
$$
\vec{f}=[K':J]_K(f,-r(e_{K'})f) \in \aA(K,\rho,R_{\lambda_0})
\oplus \aA(K',\rho,R_{\lambda_0}).
$$
The factor $[K':J]_K$ is included since $r(e_{K'})f$ does not
necessarily take values in $R_{\lambda_0}$: note that $e_{K'}
=\chi_{K'}/\mu(K')=k\chi_{K'}/k'\mu(K)$, thus 
$k'e_{K'}/(k',k)\in\HH_{K,\Z}$. Clearly, $\vec{f}$ 
is an eigenvector for $Z(\HH_{J,R_{\lambda_0}})$. Its
character is the composite
$$
\eta_{\vec{f}}: Z(\HH_{J,R_{\lambda_0}}) \overset{\ast e_K} \to
Z(\HH_{K,R_{\lambda_0}}) \overset{\eta_f}{\to}R_{\lambda_0}.
$$
Indeed, for $h\in Z(\HH_{J,R_{\lambda_0}})$, $f\in \aA(K,\rho,R_{\lambda_0})$,
we have $h(f,-e_{K'}\cdot f)=(he_Kf,-e_{K'}he_Kf)=\eta_f(he_K)(f,-e_{K'}f)$.

\n (2) Using the explicit formula for $\delta^{\vee}\delta$ in lemma 8.1
above, it follows that
$$
\delta^{\vee}\delta(\vec{f})=m(-f,0),\qquad m=\eta_f(e_{K,K'})-[K:J][K':J]_K.
$$
Note that $(-f,0) \in U_{R_{\lambda_0}}$. We claim that Corollary 2.1 applies with
this $m\in R_{\lambda_0}$. Indeed, $m\not=0$. If $m=0$ then $\vec{f}$ must belong 
to the kernel of $\delta$. Then $f$ must be invariant under the group $G_w^0$ 
(say, on the right), contradicting our assumption. 

\n (3) Define
$$
\FF=\{x \in L: xf \in\aA(K,\rho,R_{\lambda_0})+
\aA(K,\rho,L_{\lambda_0})\cap\aA(K',\rho,L_{\lambda_0})\}.
$$
This is an $R_{\lambda_0}$-submodule of $L$ containing $R_{\lambda_0}$. 
We have $\FF=L$ if $f \in\aA(K,\rho,L_{\lambda_0})\cap\aA(K',\rho,L_{\lambda_0})$. 
However, $f$ is not $G_w^0$-invariant, hence 
$f \notin\aA(K,\rho,L_{\lambda_0})\cap\aA(K',\rho,L_{\lambda_0})$. 

We claim that $\FF$ is a fractional ideal. To see this note that if
$x\in\FF$, $xf \in\aA(K,\rho,R_{\lambda_0})+
\aA(K,\rho,L_{\lambda_0})\cap\aA(K',\rho,L_{\lambda_0})$. Then 
$A_K\la xf,g\ra_K\in R_{\lambda_0}$ by Lemma 6.2. Hence
$A_K \la f,g \ra_K \FF \subset R_{\lambda_0}$
for every $g \in \aA(K,\rho,R_{\lambda_0}) \cap
(\aA(K,\rho,L_{\lambda_0})\cap\aA(K',\rho,L_{\lambda_0}))^{\bot}$. 
These $g$ span
$(\aA(K,\rho,L_{\lambda_0})\cap\aA(K',\rho,L_{\lambda_0}))^{\bot}$ 
so $f$ must belong to
$\aA(K,\rho,L_{\lambda_0})\cap\aA(K',\rho,L_{\lambda_0})$ 
if $\la f,g \ra_K=0$ for all such $g$. Thus $\la f,g\ra_K$ is not identically
zero, and $\FF$ is a fractional ideal.

\n (4) Now, the nonzero ideal $\wt{\EE}=\FF^{-1}$ satisfies:
$$
\wt{\EE}(Lf \cap[\aA(K,\rho,R_{\lambda_0})
+\aA(K,\rho,L_{\lambda_0})\cap\aA(K',\rho,L_{\lambda_0})])
\subset R_{\lambda_0}f.
$$
Therefore, $\EE=[K':J]_K\wt{\EE}$ satisfies the primitivity 
condition in corollary 2.1 (recall that $\vec{f}=[K':J]_K(f,-r(e_{K'})f)$):
$$
\EE(L\vec{f} \cap (\aA(K,\rho,R_{\lambda_0}) \oplus
\aA(K',\rho,R_{\lambda_0})+\ker \delta)) \subset
R_{\lambda_0}\vec{f}.
$$

Suppose that $v_{\lambda_0}(\wt{\EE})\neq 0$. Then 
$\FF^{-1}\subset\lambda_0$, thus $\lambda_0^{-1}\subset \FF$. 
It follows that $f \in\lambda_0(\aA(K,\rho,R_{\lambda_0})
+\aA(K,\rho,L_{\lambda_0})\cap\aA(K',\rho,L_{\lambda_0}))$.
Hence $f=\lambda_0(g+h)$, thus $f-\lambda_0g
=\lambda_0h\in\aA(K,\rho,L_{\lambda_0})
\cap\aA(K',\rho,L_{\lambda_0})$ is $G_w^0$-invariant.
Hence the reduction $\bar{f} \in \aA(K,\rho,\F_{\lambda_0})$
is $G_w^0$-invariant. Hence $v_{\lambda_0}(\wt{\EE})=0$.

Since $\ell \nmid |K_v|$ holds for at least one $v \neq w$, by 
assumption, we can find $A_K$ and $A_{K'}$ indivisible by $\ell$ 
according to Lemma 6.2. Also we can take $C=1$ by Lemma 7.2. So if 
$0<n\le v_{\lambda_0}(m)- v_{\lambda_0}([K':J]_K)\le v_{\lambda_0}
(mE^{-1}\EE^{-1})$, by Lemma 2.1, $\eta_{\vec{f}}$mod$\lambda_0^n$
factors through $\T_{J,R_{\lambda_0}}^{\new}\to R_{\lambda_0}/\lambda_0^n$.
\end{proof}

\section{Semisimplicity}

\subsection{Semisimplicity in Characteristic Zero}

Let $\pi$ be an automorphic representation of $G(\A)$ with $\pi_\infty=\rho_\infty$, 
$\pi_\Sigma\supset\rho_\Sigma$ and nonzero space $\pi^K$ of $K$ ($\subset
G(\A^\Sigma)$)-fixed vectors.
It is known that each $\pi^K$ is a simple module over $\HH_K$. 
Hence $\aA(K,\rho,\C)$ is semisimple.
Moreover, by Schur's lemma, the center $Z(\HH_K)$ acts on
$\pi^K$ by a $\C$-algebra homomorphism
$\eta_{\pi^K}:Z(\HH_K) \to \C$. For a character
$\eta:Z(\HH_K) \to \C$, we denote by
$\aA(K,\rho,\C)(\eta)$ the $\eta$-isotypic component. That is, the
eigenspace
$\aA(K,\rho,\C)(\eta)=\{f \in \aA(K,\rho,\C); r(\phi)f=\eta(\phi)f,
\forall \phi \in Z(\HH_K)\}.$
Then there is a direct sum decomposition
$\aA(K,\rho,\C)=\bigoplus_{\eta}\aA(K,\rho,\C)(\eta)$. Clearly,
$\aA(K,\rho,\C)(\eta) \neq 0$ if and only if $\eta=\eta_{\pi^K}$
for some $\pi$. The image $\T_K \subset \End_\C \aA(K,\rho,\C)$ 
of the center $Z(\HH_K)$ is a commutative semisimple
$\C$-algebra, that is, a direct product of copies of $\C$.

\begin{lem}
The eigenspace $\aA(K,\rho,\C)(\eta)$ is nonzero if and only if
$\eta$ factors through $\T_K$.
\end{lem}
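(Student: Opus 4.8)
The plan is to exploit the two facts recorded immediately before the statement: that $\aA(K,\rho,\C)$ is semisimple and decomposes as $\bigoplus_\eta\aA(K,\rho,\C)(\eta)$ over characters $\eta$ of $Z(\HH_K)$, and that $\T_K$, being the image of the commutative algebra $Z(\HH_K)$ in $\End_\C\aA(K,\rho,\C)$, is a commutative semisimple $\C$-algebra acting faithfully on $\aA(K,\rho,\C)$; in particular $\T_K$ is a finite product of copies of $\C$.

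First I would do the easy implication. Suppose $\aA(K,\rho,\C)(\eta)\neq 0$. By definition of the eigenspace, every $\phi\in Z(\HH_K)$ acts on it by the scalar $\eta(\phi)$. If $\phi$ lies in the kernel of the surjection $p\colon Z(\HH_K)\onto\T_K$, then $r(\phi)=0$ on all of $\aA(K,\rho,\C)$, hence on the nonzero subspace $\aA(K,\rho,\C)(\eta)$, so $\eta(\phi)=0$. Thus $\eta$ annihilates $\ker p$ and therefore factors through $\T_K$.

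For the converse, suppose $\eta=\bar\eta\circ p$ for a $\C$-algebra homomorphism $\bar\eta\colon\T_K\to\C$. Since $\T_K\simeq\prod_i\C$ is a finite product of copies of $\C$, the homomorphism $\bar\eta$ is projection onto one factor, hence $\bar\eta(e)=1$ for the corresponding primitive idempotent $e\in\T_K$, $e\neq 0$. Faithfulness of the $\T_K$-action then gives $e\cdot\aA(K,\rho,\C)\neq 0$, and on this subspace each $\phi\in Z(\HH_K)$ acts by $p(\phi)e=\bar\eta(p(\phi))\,e=\eta(\phi)\,e$, i.e. by the scalar $\eta(\phi)$. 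Hence $e\cdot\aA(K,\rho,\C)\subset\aA(K,\rho,\C)(\eta)$, so the latter is nonzero.

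The only step requiring a word of care is the identification of the $\C$-algebra homomorphisms $\T_K\to\C$ with the primitive idempotents of $\T_K$; this is just the structure theory of a finite product of copies of $\C$, which is precisely the content of the semisimplicity remark preceding the lemma, so there is no genuine obstacle.
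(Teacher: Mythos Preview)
Your proof is correct. The easy direction is identical to the paper's. For the converse, you take a different route: you use directly the structure $\T_K\simeq\prod_i\C$ stated just before the lemma, identify $\bar\eta$ with a coordinate projection, pick the corresponding primitive idempotent $e$, and observe that $e\cdot\aA(K,\rho,\C)$ is nonzero by faithfulness and lies in the $\eta$-eigenspace. The paper instead argues via commutative algebra: it takes $\m=\ker\bar\eta$, notes that $\m$ is in the support of $\aA(K,\rho,\C)$ by faithfulness, and then invokes the theory of associated primes to find $f$ with $\Ann_{\T_K}(f)=\m$, whence $f$ is an $\eta$-eigenvector. Your argument is more elementary and exploits the semisimplicity of $\T_K$ over $\C$ explicitly; the paper's argument is phrased so as to work for any commutative ring acting faithfully on a finite module, without needing the product-of-fields structure. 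In the present setting both are equally valid and equally short.
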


\begin{proof}
Obviously, $\eta$ factors if $\aA(K,\rho,\C)(\eta)\neq 0$.
Conversely, suppose $\eta$ factors and look at its kernel
$\m=\ker(\eta) \subset \T_K$. This is a maximal ideal since $\im(\eta)=\C$ 
is a field.  Since $\T_K$ acts faithfully on $\aA(K,\rho,\C)$, which is
finite-dimensional, $\m$ belongs to the support of $\aA(K,\rho,\C)$,
namely the localization $\aA(K,\rho,\C)_\m$ is nonzero.
By the theory of associated primes, $\m$ contains a prime ideal of the form 
$\Ann_{\T_K}(f)$ with $f \in\aA(K,\rho,\C)$ 
(Dummit and Foote, 3rd Ed., Sect. 15.4, Ex. 40, p. 730). All primes are 
maximal in $\T_K$, so in fact $\m=\Ann_{\T_K}(f)$. Clearly $\m$ contains 
$T-\eta(T)$ for every $T \in \T_K$, so $f \in \aA(K,\rho,\C)(\eta)$, 
and $f$ must be nonzero as $\m \neq \T_K$. 
\end{proof}

Now, consider the $\HH_{K,\Q}$-module $\aA(K,\rho,L)$, and 
the image $\T_{K,\Q}$ of the center $Z(\HH_{K,\Q})$ in the endomorphism 
algebra $\End_{L}\aA(K,\rho,L)$. The algebra $\T_{K,\Q}$ can be viewed 
as a subring of $\T_K \simeq \C \otimes_{\Q} \T_{K,\Q}$. We deduce that
$\T_{K,\Q}$ is a reduced commutative finite-dimensional
$\Q$-algebra, that is, a product of number fields by Nakayama's
lemma:
$$
\T_{K,\Q} \simeq L_1 \times \cdots \times L_t.
$$
Visibly, $\T_{K,\Q}$ is a semisimple $\Q$-algebra. (The $L_i$
occurring in $\T_{K,\Q}$ are totally real or CM.)

\subsection{Semisimplicity in Positive Characteristic}

Now let $R$ be a field of characteristic $p>0$. We are interested
in when $\aA(K,\rho,R)$ is a semisimple module over
$Z(\HH_{K,R})$. As we have just seen, this means that $\T_{K,R}$ 
is a semisimple $R$-algebra. We have 
$\T_{K,R}\simeq R\otimes_{\F_p}\T_{K,\F_p}$, so equivalently, 
when is $\T_{K,\F_p}$ semisimple? 

There is always a surjective homomorphism
$\xi:\F_p \otimes_{\Z} \T_{K,\Z} \onto \T_{K,\F_p}.$
Indeed the image of $\F_p \otimes_{\Z} \T_{K,\Z}$ in
$\End_{\F_p}\aA(K,\rho,\F_p)$ equals the image of $\F_p
\otimes_{\Z} Z(\HH_{K,\Z})$, and the natural map from
$\F_p \otimes_{\Z} \T_{K,\Z}$ to $Z(\HH_{K,\F_p})$ is onto. 

Put
$\wt{\T}_{K,\Z}=\{T \in \T_{K,\Q}; T(\aA(K,\rho,\Z))\subset
\aA(K,\rho,\Z)\}.$
This is a free finite $\Z$-module containing $\T_{K,\Z}$ as a
subgroup of finite index.

\begin{lem}
The kernel $\ker\xi$ is nilpotent. It is trivial iff
$p \nmid [\wt{\T}_{K,\Z}:\T_{K,\Z}]$.
\end{lem}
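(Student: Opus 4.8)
The plan is to pin down $\ker\xi$ as an explicit subquotient of the orders $\T_{K,\Z}\subset\wt{\T}_{K,\Z}$ inside $\T_{K,\Q}$, and then to read off both assertions from elementary facts about the finite ring $\T_{K,\Z}/p\T_{K,\Z}$ and the finite group $\wt{\T}_{K,\Z}/\T_{K,\Z}$.

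First I would recall the data. Put $M=\aA(K,\rho,\Z)$, a free $\Z$-module of finite rank (functions on the finite set $X_K$ valued in fixed lattices), so that $\aA(K,\rho,\F_p)=M/pM$ and $\T_{K,\Z}$ is, by definition, the image of $Z(\HH_{K,\Z})$ in $\End_\Z M$; hence $\T_{K,\Z}$ is a commutative $\Z$-order in the product of number fields $\T_{K,\Q}=L_1\times\cdots\times L_t$, and $\F_p\otimes_\Z\T_{K,\Z}=\T_{K,\Z}/p\T_{K,\Z}$ is a finite commutative ring on which $\xi$ sends the class of $T$ to the induced endomorphism $\bar T$ of $M/pM$, with image $\T_{K,\F_p}$. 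Recall also $\wt{\T}_{K,\Z}=\{T\in\T_{K,\Q};\,T(M)\subset M\}=\T_{K,\Q}\cap\End_\Z M$: it is a subring of $\T_{K,\Q}$ (a product of two $M$-preserving elements again preserves $M$) containing $\T_{K,\Z}$, and, as noted in the text, with finite index, so $Q:=\wt{\T}_{K,\Z}/\T_{K,\Z}$ is a finite abelian group of order $[\wt{\T}_{K,\Z}:\T_{K,\Z}]$.

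The key step is to identify $\ker\xi$. For $T\in\T_{K,\Z}$ one has $\bar T=0$ in $\End_{\F_p}(M/pM)$ iff $T(M)\subset pM$ iff $\tfrac1pT\in\End_\Z M$; and since $\tfrac1pT\in\T_{K,\Q}$, this last condition says exactly $\tfrac1pT\in\wt{\T}_{K,\Z}$, i.e. $T\in p\wt{\T}_{K,\Z}$. Therefore
$$\ker\xi=\bigl(p\wt{\T}_{K,\Z}\cap\T_{K,\Z}\bigr)/p\T_{K,\Z},$$
an ideal of $\T_{K,\Z}/p\T_{K,\Z}$ killed by $p$ (note $p\T_{K,\Z}\subset p\wt{\T}_{K,\Z}\cap\T_{K,\Z}$ always). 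For nilpotence it suffices to show that every element of this ideal is nilpotent, for then it lies in the nilradical of the finite, hence Artinian, commutative ring $\T_{K,\Z}/p\T_{K,\Z}$, which is nilpotent. Since neither $\ker\xi$ nor $M/pM$ is changed by replacing every $\Z$-module in sight by its localization at $p$, we may assume $Q$ is a finite $p$-group, of exponent $p^{a}$ say, so that $p^{a}\wt{\T}_{K,\Z}\subset\T_{K,\Z}$. Then for $T=pS\in p\wt{\T}_{K,\Z}\cap\T_{K,\Z}$ with $S\in\wt{\T}_{K,\Z}$ we have $S^{a+1}\in\wt{\T}_{K,\Z}$ (as $\wt{\T}_{K,\Z}$ is a ring), hence $T^{a+1}=p^{a+1}S^{a+1}=p\,(p^{a}S^{a+1})\in p\,\T_{K,\Z}$, i.e. the class of $T$ has vanishing $(a+1)$-st power.

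For triviality, $\ker\xi=0$ iff $p\wt{\T}_{K,\Z}\cap\T_{K,\Z}=p\T_{K,\Z}$; as $\supset$ always holds, this fails precisely when there is $S\in\wt{\T}_{K,\Z}$ with $pS\in\T_{K,\Z}$ but $pS\notin p\T_{K,\Z}$, and --- $\T_{K,\Q}$ being torsion-free --- the latter says $S\notin\T_{K,\Z}$. So $\ker\xi\neq0$ iff $Q$ has a nonzero element killed by $p$, i.e. iff $p\mid|Q|=[\wt{\T}_{K,\Z}:\T_{K,\Z}]$; equivalently $\ker\xi=0$ iff $p\nmid[\wt{\T}_{K,\Z}:\T_{K,\Z}]$. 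The one genuinely delicate point is the identification $\ker\xi=(p\wt{\T}_{K,\Z}\cap\T_{K,\Z})/p\T_{K,\Z}$, which rests on the equivalence that $T$ annihilates $M/pM$ if and only if $\tfrac1pT\in\wt{\T}_{K,\Z}$, together with the easily overlooked reduction to the case where $Q$ is a $p$-group in the nilpotence argument; everything else is routine.
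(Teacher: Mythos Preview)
Your proof is correct and follows essentially the same route as the paper: both identify $\ker\xi$ with $(\T_{K,\Z}\cap p\wt{\T}_{K,\Z})/p\T_{K,\Z}$ and deduce triviality from the absence of $p$-torsion in $\wt{\T}_{K,\Z}/\T_{K,\Z}$. The only minor difference is in the nilpotence step: you localize at $p$ and use the finite exponent $p^a$ of the quotient to force $T^{a+1}\in p\T_{K,\Z}$, whereas the paper instead invokes integrality of $\wt{\T}_{K,\Z}$ over $\Z$, writing down a monic equation for $p^{-1}T$ and multiplying through by $p^n$ --- a slightly slicker argument that avoids the localization.
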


\begin{proof}
For the first assertion it is enough to show that every element in 
$\ker(\xi)$ is nilpotent. Under the identification 
$\F_p \otimes_{\Z} \T_{K,\Z}\simeq \T_{K,\Z}/p\T_{K,\Z}$, the kernel 
$\ker(\xi)$ corresponds to the ideal
$(\T_{K,\Z} \cap p\wt{\T}_{K,\Z})/p\T_{K,\Z}.$
Let $T \in \T_{K,\Z} \cap p\wt{\T}_{K,\Z}$. Obviously,
$\wt{\T}_{K,\Z}$ is integral over $\Z$, so there is an equation
$$
(p^{-1}T)^n+a_{n-1}(p^{-1}T)^{n-1}+\cdots+a_1(p^{-1}T)+a_0=0
$$
for certain $a_i \in \Z$. Multiplying by $p^n$ we see that $T^n\in p\T_{K,\Z}$. 

For the last assertion, note that $\ker\xi=0$
if and only if $\F_p \otimes_{\Z} \T_{K,\Z} \to \F_p
\otimes_{\Z} \wt{\T}_{K,\Z}$ is injective. 
\end{proof}

In particular, $\ker\xi$ is contained in the Jacobson radical.
We let $\bar{\T}_{K,\Z}$ denote the integral closure of $\Z$ in
$\T_{K,\Q}$. It contains $\wt{\T}_{K,\Z}$ as a subgroup of
finite index.

\begin{lem}
If $p \nmid \Delta_K:=
[\bar{\T}_{K,\Z}:\wt{\T}_{K,\Z}]\cdot\prod_{i}\Delta_{L_i/\Q}$
then $\T_{K,\F_p}$ is semisimple.
\end{lem}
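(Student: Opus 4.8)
The plan is to deduce the semisimplicity of $\T_{K,\F_p}$ from that of $\bar{\T}_{K,\Z}\otimes_\Z\F_p$, moving along the chain of orders $\T_{K,\Z}\subseteq\wt{\T}_{K,\Z}\subseteq\bar{\T}_{K,\Z}$ inside $\T_{K,\Q}\simeq L_1\times\cdots\times L_t$. First I would identify $\bar{\T}_{K,\Z}$, the integral closure of $\Z$ in $\prod_i L_i$, with $\prod_i\OO_{L_i}$ (a tuple is integral over $\Z$ if and only if each of its components is). Hence $\bar{\T}_{K,\Z}\otimes_\Z\F_p\simeq\prod_i\OO_{L_i}/p\OO_{L_i}$. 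Since $p\nmid\prod_i\Delta_{L_i/\Q}$, the prime $p$ is unramified in each $L_i$, so every factor $\OO_{L_i}/p\OO_{L_i}$ is a finite product of finite fields, in particular reduced; a finite product of reduced rings is reduced, so $\bar{\T}_{K,\Z}\otimes_\Z\F_p$ is a reduced, finite-dimensional commutative $\F_p$-algebra, hence semisimple.

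Next I would pass from $\bar{\T}_{K,\Z}$ down to $\wt{\T}_{K,\Z}$. Put $n=[\bar{\T}_{K,\Z}:\wt{\T}_{K,\Z}]$, which divides $\Delta_K$ and is therefore prime to $p$. The finite abelian group $\bar{\T}_{K,\Z}/\wt{\T}_{K,\Z}$ is annihilated by $n$, while $n$ is a unit in $\Z_{(p)}$, so $(\bar{\T}_{K,\Z}/\wt{\T}_{K,\Z})\otimes_\Z\Z_{(p)}=0$. As $\Z_{(p)}$ is flat over $\Z$, this gives $\wt{\T}_{K,\Z}\otimes_\Z\Z_{(p)}=\bar{\T}_{K,\Z}\otimes_\Z\Z_{(p)}$ inside $\T_{K,\Q}\otimes_\Z\Z_{(p)}$, and reducing modulo $p$ yields $\wt{\T}_{K,\Z}\otimes_\Z\F_p=\bar{\T}_{K,\Z}\otimes_\Z\F_p$; by the previous paragraph this ring is semisimple.

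Finally I would bring in $\T_{K,\F_p}$ itself. By construction it is the image of the natural map $\T_{K,\Z}\otimes_\Z\F_p\to\End_{\F_p}\aA(K,\rho,\F_p)$ -- this is the surjection $\xi$ considered above, and here I use $\aA(K,\rho,\F_p)=\aA(K,\rho,\Z)\otimes_\Z\F_p$. Since $\wt{\T}_{K,\Z}$ acts on the lattice $\aA(K,\rho,\Z)$ extending the action of $\T_{K,\Z}$, this map factors as $\T_{K,\Z}\otimes_\Z\F_p\to\wt{\T}_{K,\Z}\otimes_\Z\F_p\to\End_{\F_p}\aA(K,\rho,\F_p)$, so $\T_{K,\F_p}$ is contained in the image of $\wt{\T}_{K,\Z}\otimes_\Z\F_p$. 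That image is a homomorphic image of the semisimple commutative ring $\wt{\T}_{K,\Z}\otimes_\Z\F_p$, hence reduced; a subring of a reduced commutative ring is reduced, and a reduced finite-dimensional commutative $\F_p$-algebra is semisimple. Therefore $\T_{K,\F_p}$ is semisimple.

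The computations here are routine; the one point to watch is the identification of $\T_{K,\F_p}$ with the image of $\T_{K,\Z}\otimes_\Z\F_p$ in $\End_{\F_p}\aA(K,\rho,\F_p)$, together with the compatibility of the $\T_{K,\Z}$- and $\wt{\T}_{K,\Z}$-actions after reduction mod $p$ -- that is, that enlarging the order to $\wt{\T}_{K,\Z}$ before reducing really does produce a ring that dominates $\T_{K,\F_p}$. This is exactly the content of the surjection $\xi$ discussed above combined with the defining property of $\wt{\T}_{K,\Z}$, so it should require only a careful unwinding of definitions rather than any new idea.
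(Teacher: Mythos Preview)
Your argument is correct and follows essentially the same route as the paper: show $\bar{\T}_{K,\Z}\otimes\F_p$ is a product of finite fields using unramifiedness, identify it with $\wt{\T}_{K,\Z}\otimes\F_p$ since the index is prime to $p$, and then embed $\T_{K,\F_p}$ into this reduced ring. The only cosmetic difference is that the paper uses the identification $\T_{K,\F_p}\simeq\T_{K,\Z}/(\T_{K,\Z}\cap p\wt{\T}_{K,\Z})$ from the preceding lemma to embed $\T_{K,\F_p}$ directly into $\wt{\T}_{K,\Z}/p\wt{\T}_{K,\Z}$, whereas you go via the image in $\End_{\F_p}\aA(K,\rho,\F_p)$; since $\wt{\T}_{K,\Z}/p\wt{\T}_{K,\Z}$ injects into that endomorphism ring (by the very definition of $\wt{\T}_{K,\Z}$), the two embeddings coincide.
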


\begin{proof}
Note that $\F_p \otimes_{\Z}\wt{\T}_{K,\Z}
\simeq \F_p \otimes_{\Z}\bar{\T}_{K,\Z}$ since $p \nmid
[\bar{\T}_{K,\Z}:\wt{\T}_{K,\Z}]$. 

As $p$ does not divide the
discriminant $\Delta_{L_i/\Q}$ for each $i$, $p$ is unramified in 
every $L_i$ occurring in $\T_{K,\Q}$. Hence
$\F_p \otimes_{\Z}\bar{\T}_{K,\Z} \simeq \prod_i
R_{L_i}/pR_{L_i} \simeq \prod_i\prod_{\p|p}R_{L_i}/\p.$

There is an embedding
$\T_{K,\F_p} \simeq \T_{K,\Z}/\T_{K,\Z} \cap p\wt{\T}_{K,\Z}
\hookrightarrow \wt{\T}_{K,\Z}/p\wt{\T}_{K,\Z} \simeq \F_p
\otimes_{\Z}\wt{\T}_{K,\Z}.$
It follows that $\T_{K,\F_p}$ is semisimple. 
\end{proof}

The converse holds at least for $p \nmid
[\wt{\T}_{K,\Z}:\T_{K,\Z}]$ (that is, when $\xi$ is injective).

\subsection{The Simple Modules}

Let $R$ be a perfect field of characteristic $p \geq 0$. Up to
isomorphism, a simple $Z(\HH_{K,R})$-module is given
by an extension $R'/R$ with an action given by a surjective
$R$-algebra homomorphism $\eta:Z(\HH_{K,R})
\onto R'$. If $(\eta,\,R')$ is such a submodule of
$\aA(K,\rho,R)$, the extension $R'/R$ is finite and $\eta$
factors through $\T_{K,R}$. If $p \nmid \Delta_K$, there exists a
finite extension $L/R$ such that we have a direct sum
decomposition
$$
\aA(K,\rho,L)=\bigoplus_{\eta}\aA(K,\rho,L)(\eta).
$$
This is still true when $p|\Delta_K$, provided
$\aA(K,\rho,L)(\eta)$ denotes the generalized eigenspace:
$$
\{\text{$f \in \aA(K,\rho,L)$;
$\forall \phi \in  Z(\HH_{K,L})$,
$(r(\phi)-\eta(\phi))^nf=0$  $\exists n \geq 1$}\}.
$$
Observe the following:

\begin{lem} Let $R$ be a field. Choose a finite extension
$L/R$ as above. Let $L'/L$ be an arbitrary extension. Suppose
$\eta': Z(\HH_{K,L'}) \to L'$ occurs in
$\aA(K,\rho,L')$. Then $\eta'=1\otimes \eta$ for some
character $\eta: Z(\HH_{K,L}) \to L$ occurring in
$\aA(K,\rho,L)$. Moreover,
$$
\aA(K,\rho,L')(1\otimes \eta) \simeq L' \otimes_L
\aA(K,\rho,L)(\eta),
$$
so $\eta$ and $\eta'=1 \otimes \eta$ occur with the same multiplicity.
\end{lem}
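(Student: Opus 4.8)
The plan is to reduce everything to the decomposition $\aA(K,\rho,L)=\bigoplus_\eta\aA(K,\rho,L)(\eta)$ already available over $L$ — with $L/R$ chosen as above, so that the sum runs over the finitely many characters $\eta\colon Z(\HH_{K,L})\to L$ occurring in $\aA(K,\rho,L)$ and $(\eta)$ denotes a generalized eigenspace — together with the compatibility of $\aA(K,\rho,-)$ with base change established in Section 5. First I would observe that, $L\to L'$ being a field extension, $-\otimes_LL'$ is exact; hence $\aA(K,\rho,L')\cong\aA(K,\rho,L)\otimes_LL'$, and, because forming the center of an algebra commutes with such a base change, $Z(\HH_{K,L'})\cong Z(\HH_{K,L})\otimes_LL'$ and likewise $\T_{K,L'}\cong\T_{K,L}\otimes_LL'$ (exactly as for $\T_{K,\F_p}$ in Section 9.2). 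Tensoring the $L$-decomposition with $L'$ then gives
$$\aA(K,\rho,L')\cong\bigoplus_\eta\bigl(\aA(K,\rho,L)(\eta)\otimes_LL'\bigr).$$

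Next I would identify each summand with a generalized eigenspace over $L'$. Fix an $\eta$ occurring over $L$. For every $\phi\in Z(\HH_{K,L})$ the operator $r(\phi)-\eta(\phi)$ is nilpotent on $\aA(K,\rho,L)(\eta)$, so $r(1\otimes\phi)-(1\otimes\eta)(1\otimes\phi)$ is nilpotent on $\aA(K,\rho,L)(\eta)\otimes_LL'$; since $Z(\HH_{K,L'})$ acts through $\T_{K,L'}=\T_{K,L}\otimes_LL'$, every central element acts on this summand through $1\otimes\eta$ modulo a nilpotent, whence $\aA(K,\rho,L)(\eta)\otimes_LL'\subseteq\aA(K,\rho,L')(1\otimes\eta)$. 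The characters $1\otimes\eta$ are pairwise distinct (distinct $\eta$ disagree on some $\phi$, and $L\hookrightarrow L'$), so the subspaces $\aA(K,\rho,L')(1\otimes\eta)$ are in direct sum; as the smaller subspaces $\aA(K,\rho,L)(\eta)\otimes_LL'$ already exhaust $\aA(K,\rho,L')$, each containment is an equality, giving
$$\aA(K,\rho,L')(1\otimes\eta)\cong L'\otimes_L\aA(K,\rho,L)(\eta).$$

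It remains to show that an arbitrary $\eta'$ occurring over $L'$ has the form $1\otimes\eta$. Pick $0\neq v\in\aA(K,\rho,L')(\eta')$, so $(r(\phi)-\eta'(\phi))^Nv=0$ for all $\phi\in Z(\HH_{K,L'})$ and a suitable $N$; writing $v=\sum_\eta v_\eta$ along the above decomposition, some $v_\eta\neq0$. If $\eta'(\phi)\neq(1\otimes\eta)(\phi)$ for some $\phi$, then $r(\phi)-\eta'(\phi)$ is invertible on $\aA(K,\rho,L')(1\otimes\eta)$, contradicting $(r(\phi)-\eta'(\phi))^Nv_\eta=0$; hence $\eta'=1\otimes\eta$. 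Moreover $\aA(K,\rho,L)(\eta)\neq0$, since its base change $\aA(K,\rho,L')(\eta')$ is, so $\eta$ occurs over $L$. Finally, since $L$ and $L'$ were chosen so that every occurring character takes values in the base field itself, the multiplicity of $\eta$ (resp.\ of $\eta'$) is just $\dim_L\aA(K,\rho,L)(\eta)$ (resp.\ $\dim_{L'}\aA(K,\rho,L')(\eta')$), and the displayed isomorphism makes these equal.

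I expect the one genuine subtlety to be the interchange of the generalized eigenspace decomposition with the scalar extension $L\to L'$ — concretely, promoting the inclusion $\aA(K,\rho,L)(\eta)\otimes_LL'\subseteq\aA(K,\rho,L')(1\otimes\eta)$ to an equality, i.e.\ ruling out a finer decomposition over $L'$. This is forced by the identity $\T_{K,L'}=\T_{K,L}\otimes_LL'$ (so that no ``new'' central operators appear over $L'$) together with the fact that both families of subspaces sum to all of $\aA(K,\rho,L')$; granting that, the rest is elementary linear algebra over the commutative algebra $\T_{K,L'}$.
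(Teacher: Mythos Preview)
Your argument is correct and follows essentially the same route as the paper: use the base change $\aA(K,\rho,L')\simeq L'\otimes_L\aA(K,\rho,L)$, note that $L'\otimes_L\aA(K,\rho,L)(\eta)\hookrightarrow\aA(K,\rho,L')(1\otimes\eta)$, and conclude that these injections are isomorphisms because both sides exhaust $\aA(K,\rho,L')$. The paper's proof is a terse two-sentence version of exactly this; you have simply filled in the linear-algebra details and added the (harmless, though not strictly necessary) remark that $\T_{K,L'}\cong\T_{K,L}\otimes_LL'$.
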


\begin{proof}
Both $\aA(K,\rho,L)$ and $\aA(K,\rho,L') \simeq
L' \otimes_L \aA(K,\rho,L)$ have decompositions into direct
sums of generalized eigenspaces. Under this isomorphism, $L'
\otimes_L \aA(K,\rho,L)(\eta) \hookrightarrow
\aA(K,\rho,L')(1\otimes \eta)$. Therefore, every $\eta'$
occurring in $\aA(K,\rho,L')$ must come from an $\eta$, and
the above injection must be an isomorphism. 
\end{proof}

Let us apply these results to a number field $R=L'$. We conclude that there
exists a number field $L/L'$ such that $\aA(K,\rho,L)$ is a
direct sum of eigenspaces for characters $Z(\HH_{K,L})
\to L$. Furthermore, if $\eta: Z(\HH_K)\to \C$ 
is a character such that $\aA(K,\rho,\C)(\eta)\neq 0$, 
then $\eta$ restricts to a $\Q$-algebra homomorphism
$Z(\HH_{K,\Q}) \to L$ occurring in
$\aA(K,\rho,L)$. In addition, since $Z(\HH_{K,\Z})$
preserves $\aA(K,\rho,R_L)$, $\eta$ even restricts
to a ring homomorphism $Z(\HH_{K,\Z}) \to
R_L$ occurring in $\aA(K,\rho,R_L)$.

\section{End of Proof}

\subsection{Invariance Modulo $\lambda$}
Denote by $\aA^0(K,\rho,\F_\lambda)$ the
space of the nonabelian modulo $\lambda$ relative to $K$ automorphic 
forms in $\aA(K,\rho,\F_\lambda)$.

\begin{lem}
Choose a number field $L/\Q$ such that $\aA(K,\rho,L)$ is a 
direct sum of eigenspaces. Put $R=R_L$. Let $\pi$ be an 
automorphic representation of $G(\A)$ such that $\pi^K\not=0$, 
$\pi_\Sigma\supset\rho_\Sigma$,
and $\pi_{\infty}=\rho_\infty$. Denote by
$\eta=\eta_{\pi^K}: Z(\HH_{K,\Z}) \to R$ the character 
giving the action on $\pi^K$. Let $w$ be a place such that $K_w$ is 
hyperspecial, thus $G_w$ is unramified. Suppose $\pi$ is 
${\operatorname{non}}$ abelian modulo $\lambda$ relative to $K$, and 
$\ov{\eta}:Z(\HH_{K,\Z})\to\F_{\lambda_0}=R_{\lambda_0}/\lambda_0$ 
denotes the reduction of $\eta$. Then the eigenspace 
$\aA^0(K,\rho,\F_\lambda)(\bar{\eta})$ 
contains no nonzero $G_w^{\der}$-invariant functions.
\end{lem}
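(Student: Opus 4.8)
The plan is to argue by contradiction. Suppose $0\neq f\in\aA^0(K,\rho,\F_\lambda)(\ov\eta)$ is invariant under the right translation action of $G_w^{\der}$, where $\ov\eta=\ov{\eta_{\pi^K}}$. Since $\pi$ is nonabelian modulo $\lambda$ relative to $K$, the character $\ov\eta$ is one of the nonabelian ones, so the whole $\ov\eta$-(generalised) eigenspace of $\aA(K,\rho,\F_\lambda)$ already lies inside $\aA^0$; thus it suffices to show that a nonzero $G_w^{\der}$-invariant $f$ in this eigenspace forces $\pi$ to be abelian modulo $\lambda$ relative to $K$, contradicting the hypothesis. Let $\wt f:G(\A^\infty)\to V_{\Sigma,\F_\lambda}^\ast\otimes V_{\infty,\F_\lambda}^\ast$ represent $f$: it is left $G(F)$-invariant, transforms on the right under $K'{}'$ via $\rho_\Sigma^\ast\otimes\wt\rho_\infty^\ast$, is fixed by a small open subgroup, and by assumption satisfies $\wt f(xh)=\wt f(x)$ for all $h\in G^{\der}(F_w)$.

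The first step is to upgrade this to right invariance under all of $G^{\der}(\A^\infty)$, by strong approximation. Since $K_w$ is hyperspecial, $w\notin\Sigma$ and $G_w^{\der}$ is unramified; a nontrivial unramified semisimple group over a $p$-adic field is isotropic, so $G^{\der}(F_w)$ is noncompact. As $G^{\der}$ is $F$-simple, simply connected, and $G_\infty^{\der}$ is compact, strong approximation for $G^{\der}$ relative to $S=\{w\}\cup\infty$ gives that $G^{\der}(F)$ is dense in $G^{\der}(\A^{\infty,w})$ (the finite adèles away from $w$). Fix $x\in G(\A^\infty)$ and put $\phi_x(g)=\wt f(xg)$ for $g\in G^{\der}(\A^\infty)$. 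Then $\phi_x$ is locally constant, left invariant under $x^{-1}G^{\der}(F)x$ (by left $G(F)$-invariance of $\wt f$), and right invariant under $G^{\der}(F_w)$ (by hypothesis). Since $x^{-1}G^{\der}(F)x$ is dense in $x^{-1}G^{\der}(\A^{\infty,w})x=G^{\der}(\A^{\infty,w})$ and $\phi_x$ is continuous, $\phi_x$ is left invariant under all of $G^{\der}(\A^{\infty,w})$. But $G^{\der}(\A^\infty)$ is the internal direct product of the commuting subgroups $G^{\der}(\A^{\infty,w})$ and $G^{\der}(F_w)$, so writing $g=ah_w$ accordingly gives $\phi_x(g)=\phi_x(ah_w)=\phi_x(h_w)=\phi_x(1)=\wt f(x)$. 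Hence $\wt f(xg)=\wt f(x)$ for every $g\in G^{\der}(\A^\infty)$ and every $x$.

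The second step is to read off that $\ov\eta$ is abelian. The subspace of $\aA(K,\rho,A)$ of right $G^{\der}(\A^\infty)$-invariant forms is stable under the Hecke operators (as $G^{\der}(\A^\infty)$ is a normal subgroup and convolution is by $K'{}'$-biinvariant functions), and our $\wt f\neq0$ lies in its $\ov\eta$-component, so $\ov\eta$ occurs there. A right $G^{\der}(\A^\infty)$-invariant automorphic form descends along $\nu:G\to G^{\ab}$, since the $G^{\der}(\A^\infty)$-orbits on $G(F)\bs G(\A^\infty)$ are in bijection with $\nu(G(F))\bs\nu(G(\A^\infty))\subseteq G^{\ab}(F)\bs G^{\ab}(\A^\infty)$ (the kernel of $G(\A^\infty)\to G^{\ab}(\A^\infty)$ being exactly $G^{\der}(\A^\infty)$). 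So $\aA(K,\rho,A)^{G^{\der}(\A^\infty)}$ is a space of mod-$\lambda$ automorphic forms on the torus $G^{\ab}$ of the induced level and type, whose Hecke eigensystems are governed by the group ring of the finite abelian group $G^{\ab}(F)\bs G^{\ab}(\A^\infty)/\ov{\nu(K'{}')}$. Every $\ov\F_\ell$-valued character of such a group ring lifts to characteristic zero (its $\ell$-primary part maps trivially to $\ov\F_\ell^\times$), so $\ov\eta$ is the reduction of some characteristic-zero eigensystem occurring in $\aA(K,\rho,R_{\lambda_0})^{G^{\der}(\A^\infty)}$; such eigensystems are exactly the $\eta_\chi$ for automorphic characters $\chi$ of $G(\A)$ trivial on $K$ with $\chi_\infty=\rho_\infty$ (so $\rho_\infty$ is one-dimensional) and $\chi_\Sigma=\rho_\Sigma$. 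Hence $\eta_{\pi^K}\equiv\eta_\chi$ $(\md\lambda)$, i.e.\ $\pi$ is abelian modulo $\lambda$ relative to $K$ — a contradiction.

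I expect the first step to be the main obstacle: one must verify that strong approximation applies (i.e.\ $G_w^{\der}$ is isotropic, which uses only that $K_w$ is hyperspecial and $G^{\der}\neq1$), and, more delicately, organise the left and right invariances of $\phi_x$ so that the splitting $G^{\der}(\A^\infty)=G^{\der}(\A^{\infty,w})\times G^{\der}(F_w)$ can be exploited. An alternative to the second step: once $\wt f$ is $G^{\der}(\A^\infty)$-invariant, evaluating at $x$ and using the transformation law under $K'{}'\cap G^{\der}(\A^\infty)$ shows that $\wt f(x)$ is fixed by $\rho_\Sigma^\ast\otimes\wt\rho_{\infty,\F_{\lambda_0}}^\ast$ on that subgroup, whose reduction contains $\prod_{v\in\Sigma_{\lambda_0}}\GG^{\der}(\F_v)$; by Lemma 5.2 the factor $\wt\rho_{\infty,\F_{\lambda_0}}$ is absolutely irreducible there, and unless the ambient representation is trivial this forces $\wt f\equiv0$ directly. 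Either way, the second step amounts to bookkeeping with the torus $G^{\ab}$.
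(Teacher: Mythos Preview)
Your proposal is correct and follows essentially the same route as the paper: assume a nonzero $G_w^{\der}$-invariant vector in the $\bar\eta$-eigenspace, invoke strong approximation (using that $G^{\der}$ is simple, simply connected, and $G_w^{\der}$ is noncompact) to upgrade to full $G^{\der}(\A^\infty)$-invariance, descend to the torus $G^{\ab}$ via the exact sequence $1\to G^{\der}(\A^\infty)\to G(\A^\infty)\to G^{\ab}(\A^\infty)\to 1$, and lift the resulting abelian eigensystem to characteristic zero to obtain an automorphic character $\chi$ with $\eta_{\pi^K}\equiv\eta_\chi$ (mod~$\lambda$), contradicting the nonabelian hypothesis. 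Your lifting step (characters of a finite abelian group ring lift directly, since the $\ell$-primary part dies in $\bar\F_\ell^\times$) is more elementary than the paper's invocation of the Deligne--Serre lemma, though the paper reaches the same endpoint by observing that characters form a basis of the functions on the finite abelian group $Y_K$; and your strong-approximation paragraph spells out the left/right invariance bookkeeping that the paper leaves implicit.
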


\begin{proof}
As observed above, $\eta$ occurs in $\aA(K,\rho,R_{\lambda_0})$, 
that is, there exists an eigenform $0 \neq f \in \aA(K,\rho,R_{\lambda_0})$
with $\eta_f=\eta$. Let $\bar{f}=1 \otimes f \in \aA(K,\rho,\F)$ be
the reduction of $f$ modulo $\lambda$, where $\F=R_\lambda/\lambda$ 
is a finite extension of $\F_{\ell}$. By scaling $f$, we can assume that 
$\bar{f} \neq 0$. Let us assume $\bar{f}$ is 
$G_w^{\der}$-invariant. Now, $G^{\der}$ 
is simple, simply connected and $G_w^{\der}$ is noncompact. 
By the strong approximation theorem, $\bar{f}$ is in fact
$G^{\der}(\A^{\infty})$-invariant. In particular, $\dim\rho_\Sigma=1$.
As $H^1(F_v,G^{\der})=0$ for each finite place $v$, there is a short exact sequence
$$
1 \to G^{\der}(\A^{\infty}) \to G(\A^{\infty})
\overset{\nu}{\to}G^{\ab}(\A^{\infty}) \to 1.
$$
It follows that $\bar{f}$ lives on $G^{\ab}(\A^{\infty})$.
More precisely, there exists a unique function
$\wt{f}:G^{\ab}(\A^{\infty}) \to \F$ such that
$\bar{f}=\wt{f} \circ \nu$. It fits into the diagram
\begin{align*}
\xymatrix{ X_K=G(F) \backslash G(\A^{\infty})/K'{}' \ar[r]^-{\bar{f}}
\ar@{->>}[d]_{\nu} &
\F \\
Y_K=\nu(G(F))\backslash G^{\ab}(\A^{\infty})/\nu(K'{}')
\ar[ur]_-{\wt{f}} }
\end{align*}

If $R$ is a ring we denote by $\aA(K,\rho,R)^{\ab}$ 
the module of $R$-valued functions on $Y_K$. Pulling back via $\nu$,
identifies $\aA(K,\rho,R)^{\ab}$ with an
$\HH_{K,R}$-submodule of $\aA(K,\rho,R)$. Then $0 \neq
\wt{f} \in \aA(K,\rho,\F)^{\ab,0}(\bar{\eta})$.
By Lemme 6.11 of [DS, p. 522] we can lift $\bar{\eta}$ to 
characteristic zero: there exists an eigenform $0 \neq f' \in
\aA(K,\rho,L_{\lambda})^{\ab}$ such that its 
character $\eta': Z(\HH_{K,\Z}) \to R_{\lambda}$
reduces to $\bar{\eta}$ modulo $\lambda$. From
the results of the previous section we see that in fact $\eta'$
maps into $R$, and it occurs in
$\aA(K,\rho,L)^{\ab}$ (and therefore in
$\aA(K,\rho,L_{\lambda_0})^{\ab}$). However,
$\aA(K,\rho,L_{\lambda_0})^{\ab}$ is just the 
space of $L_{\lambda_0}$-valued functions on the finite abelian group 
$Y_K$, so the characters form a basis. We conclude that there exists a 
character $\chi$ such that $\eta(\phi) \equiv \eta_{\chi}(\phi)$ (mod 
$\lambda$) for all $\phi \in Z(\HH_{K,\Z})$. This contradicts the
assumption that $\pi$ is nonabelian mod $\lambda$ relative to $K$.
Hence $\aA^0(K,\rho,\F_\lambda)(\bar{\eta})$ 
contains no nonzero $G_w^{\der}$-invariant functions.
\end{proof}

\subsection{Proof of Theorem 0.3}
Note that $\pi\subset\aA(K,\rho,L)$ for some number field $L$.
The reduction $\bar{\eta}_{\pi^K}$ modulo $\lambda \cap R_L$ factors 
through $\T_{J,R_{\lambda_0}}^{\new}$, where $R_{\lambda_0}$ is the 
completion of $R_L$ at $\lambda_0$, by the main lemma (Lemma 8.2). 
That is, there exists a character
$\eta':Z(\HH_{J,R_{\lambda_0}}) \to \F_\lambda$ 
factoring through $\T_{J,R_{\lambda_0}}^{\new}$ such that 
$\eta'(\phi)=\eta_{\pi^K}(e_K\ast \phi)$ (mod $\lambda$) for all 
$\phi \in Z(\HH_{J,R_{\lambda_0}})$. As above, there is 
a surjective homomorphism with nilpotent kernel
$$
\F_{\lambda_0} \otimes_{R_{\lambda_0}} \T_{J,R_{\lambda_0}}^{\new} 
\onto\T_{J,\F_{\lambda_0}}^{\new}.
$$
Thus $\eta'$ gives rise to a character $\T_{J,\F_{\lambda_0}}^{\new}
\to \F_{\lambda_0}$, also denoted by $\eta'$. By a standard 
argument (used above in section 8.2), there is an eigenform $f' \in
\aA(J,\rho,\F_{\lambda_0})^{\new}$ with character $\eta'$. 
Now we apply the Deligne-Serre lifting lemma, [DS, p. 522], to the finite
free module $\aA(J,\rho, R_{\lambda_0})^{\new}$. It gives the 
existence of a character $\wt{\eta}: \T_{J,R_{\lambda_0}}^{\new} 
\to\wt{R}_{\lambda_0}$ occurring in
$\aA(J,\rho,\wt{R}_{\lambda_0})^{\new}$ and
reducing to $\eta'$, where $\wt{R}_{\lambda_0}$ is the
ring of integers in a finite extension of $L_{\lambda_0}$. Since
$\T_{J,R_{\lambda_0}}^{\new}$ preserves the lattice
$\aA(J,\rho,R_{\lambda_0})^{\new}$, the values 
$\wt{\eta}(\phi)$ all lie in the ring of integers of some 
number field, $R_{\wt{L}}$. We deduce that there exists 
a character $\wt{\eta}: Z(\HH_{J,R_L}) \to
R_{\wt{L}}$, occurring in
$\aA(J,\rho,L)^{\new}$, such that
$$\wt{\eta}(\phi) \equiv \eta_{\pi^K}(e_K \ast \phi)(\md\lambda)$$
for all $\phi \in Z(\HH_{J,R})$. From the decomposition
of $\aA(J,\rho,L_{\lambda_0})$ in terms of automorphic 
representations, it follows that the new space 
$\aA(J,\rho,L_{\lambda_0})^{\new}$ has the following description:
$$
\aA(J,\rho,L_{\lambda_0})^{\new} \simeq {\bigoplus}_{\{\pi \in
\Pi_{\operatorname{unit}}(G(\A)) ; \pi_{\infty}
=\rho_\infty,\,\pi_\Sigma\supset\rho_\Sigma\}}m(\pi)\cdot\pi^J/(\pi^K+\pi^{K'}),
$$
as $Z(\HH_J)$-modules. The center $Z(\HH_J)$ acts
on the quotient $\pi^J/(\pi^K+\pi^{K'})$ by the character
$\eta_{\pi^J}$. We conclude that there exists an automorphic
representation $\wt{\pi}$ of $G(\A)$ with
$\wt{\pi}_{\infty}=\rho_\infty$, $\wt{\pi}_{\Sigma}\supset\rho_\Sigma$ 
and $\wt{\pi}^J\neq
\wt{\pi}^K+\wt{\pi}^{K'}$, such that
$\eta_{\wt{\pi}^J}=\wt{\eta}$. In particular,
$$\eta_{\wt{\pi}^J}(\phi) \equiv \eta_{\pi^K}(e_K \ast\phi)(\md\lambda)$$
for all $\phi \in Z(\HH_{J,R_L})$. This finishes the proof.\hfill $\square$

\section{Applications in Rank One}

\subsection{$\U(3)$ - the Nonsplit Case}

Let $F$ be a local nonarchimedean field. Suppose that the $F$-rank of 
$G^{\der}$ is one. In this rank one situation the parahoric
$J=K\cap K'$ is an Iwahori subgroup, denoted $I$.

\begin{lem}
If $\pi^I \neq\pi^K+\pi^{K'}$ then $\pi^K=\{0\}=\pi^{K'}$.
\end{lem}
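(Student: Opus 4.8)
The plan is to view $V:=\pi^{I}$ as a module over the Iwahori--Hecke algebra $\HH_{I}=\HH(G_w/\!/I)$ and to prove that $W:=\pi^{K}+\pi^{K'}$ is an $\HH_{I}$-submodule of $V$. Since $V$ admits no proper nonzero $\HH_{I}$-submodule (reason below) and $W\subsetneq V$ by hypothesis, this forces $W=0$, i.e.\ $\pi^{K}=\pi^{K'}=0$, which is the claim. One may assume $V\neq0$, else $\pi^{I}=\pi^{K}+\pi^{K'}=0$ already. That $V$ has no proper nonzero $\HH_{I}$-submodule $W'$ follows from the irreducibility of $\pi$: the $G_w$-submodule $\HH\ast W'$ of $\pi$ has $I$-fixed space $e_{I}\HH e_{I}W'=\HH_{I}W'=W'$ (using $W'=e_{I}W'$), so if $0\neq W'\subsetneq V$ then $\HH\ast W'$ is a nonzero proper $G_w$-submodule of $\pi$, a contradiction.

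The substance is thus to show $\HH_{I}$-stability of $W$. First I would note $W\subseteq V$ since $I\subseteq K\cap K'$, so $e_{I}$ fixes $W$ pointwise. The idempotent $e_{K}\in\HH_{I}$ acts on $\pi$ as the averaging projection onto $\pi^{K}$, whence $e_{K}\pi^{K}=\pi^{K}$ and $e_{K}\pi^{K'}\subseteq e_{K}\pi=\pi^{K}$, so $e_{K}W\subseteq W$; likewise $e_{K'}W\subseteq W$. It remains to see that $e_{I},e_{K},e_{K'}$, together with the length-zero elements of $\HH_{I}$, generate $\HH_{I}$. This is where the rank-one hypothesis on $G^{\der}$ enters: the reduced building $\mathcal{B}_w$ is a tree, $I$ is attached to an edge $e=(x,x')$ with $K$, $K'$ the two maximal parahorics containing $I$ (the stabilisers of its two vertices), and the Iwahori--Weyl group is $\wt{W}=W_{\mathrm{aff}}\rtimes\Omega$ with $W_{\mathrm{aff}}=\la s,s'\ra$ the infinite dihedral group generated by the reflections $s$, $s'$ in the two panels. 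From $e_{I}\ast e_{K}\ast e_{I}=e_{K}$ one gets $T_{s}=[K:I]\,e_{K}-e_{I}$ and $T_{s'}=[K':I]\,e_{K'}-e_{I}$, so the subalgebra generated by $e_{I},e_{K},e_{K'}$ contains all $T_{w}$ with $w\in W_{\mathrm{aff}}$.

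Finally, a length-zero element $\omega\in\Omega$ is represented by some $g\in N_{G_w}(I)$, and $T_{\omega}$ acts on $V$ exactly as $\pi(g)$. Such a $g$ permutes the parahoric subgroups containing $I$, and by the rank-one hypothesis the maximal ones among these are just $K$ and $K'$; hence $\{gKg^{-1},gK'g^{-1}\}=\{K,K'\}$, so $\pi(g)$ permutes $\{\pi^{K},\pi^{K'}\}$ and preserves $W$. Therefore every standard generator of $\HH_{I}$ stabilises $W$, so $W$ is an $\HH_{I}$-submodule of $V$, and the lemma follows. I expect the delicate point to be precisely this structural step — pinning down $\HH_{I}$ and its generators, and thereby the $\HH_{I}$-stability of $W$; once that is granted everything is formal. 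Alternatively, the conclusion can be read off from the explicit list of Iwahori-spherical irreducible representations of a rank-one group recalled in Section 11.3, for all of which one has $\pi^{K}+\pi^{K'}=\pi^{I}$ except when $\pi$ is the Steinberg representation, where $\pi^{K}=\pi^{K'}=0$.
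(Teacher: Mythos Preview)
Your proof is correct but proceeds quite differently from the paper's. The paper argues by a direct case analysis: since $\pi^{I}\neq 0$, $\pi$ is a constituent of some unramified principal series $\Ind(\chi)$; if $\pi=\Ind(\chi)$ then $\dim\pi^{I}=|W|=2$ while $\dim\pi^{K}=\dim\pi^{K'}=1$ (both vertices being special in a rank-one building, so Iwasawa holds for $K'$ too), and $\pi^{K}+\pi^{K'}\subsetneq\pi^{I}$ would force $\pi^{K}=\pi^{K'}$ to be a line fixed by $\la K,K'\ra=G_w^{0}$, making $\pi$ one-dimensional, a contradiction; if $\pi\subsetneq\Ind(\chi)$ then $\dim\pi^{I}=1$ and the hypothesis gives $\pi^{K}=\pi^{K'}=0$ immediately.

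Your argument instead exploits the Iwahori--Matsumoto presentation of $\HH_{I}$: you show $W=\pi^{K}+\pi^{K'}$ is stable under $e_{K},e_{K'}$ (hence under $T_{s},T_{s'}$) and under the length-zero operators $T_{\omega}$ (since any $g\in N_{G_w}(I)$ stabilises the edge setwise and so permutes its two vertices, hence permutes $\{K,K'\}$), and then invoke simplicity of $\pi^{I}$ as an $\HH_{I}$-module. This is a cleaner, more structural route that avoids dimension counts and the specialness of $K'$; the paper's proof is more elementary in that it uses nothing beyond Iwasawa and the Borel--Bernstein classification, but it leans on the rank-one coincidence that both vertices of the tree are special. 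Your closing remark about reading the conclusion off Section~11.3 is essentially the paper's method specialised to $\U(3)$.
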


\begin{proof}
Suppose $\pi^I\not=\{0\}$. Then $\pi$ is a constituent of a fully induced 
representation $\Ind(\chi)$, $\chi$ being an unramified character of the 
maximal torus $A$ in the Borel subgroup $B$ of $G$, by [Bo] or [B]. There 
are two cases. 

If $\pi=\Ind(\chi)$ then $\dim_{\C}\pi^K=1=\dim_{\C}\pi^{K'}$. 
Indeed, the building of $G$ is a tree and all vertices are special. Thus 
the maximal compact subgroup $K'$ is special, so we have the 
Iwasawa decomposition $G=BK'$, and $B\cap K'=A(R)$ is the maximal 
compact subgroup in the maximal torus $A$ in $B$. Then $f(g)=\chi(b)$, 
$g=bk'$, $b\in B$, $k'\in K'$, is well-defined, nonzero, fixed by $K'$. 

Now $\dim_{\C}\pi^I=[W]$, and the number of elements $[W]$ in the Weyl 
group $W$ of $A$ in $G$ is $2^{\rk(G^{\der})}$, namely 2. Our assumption 
is that $\pi^K+\pi^{K'}$ is not $\pi^I$, thus $\dim_\C(\pi^K+\pi^{K'})$ 
is 1. Hence $\pi^K=\pi^{K'}$ is a one-dimensional space fixed by $K$ 
and $K'$, hence by $G^0$ by Lemma 7.1, so that $\pi$ is a character, 
contradicting our assumption that $\pi=\Ind(\chi)$. 

The second case is where $\pi$ is strictly contained in $\Ind(\chi)$. 
By [Bo] or [B], each constituent of $\Ind(\chi)$ has an Iwahori invariant 
vector. Hence $\dim_\C\pi^I=1$. But $\pi^K+\pi^{K'}$ is strictly contained 
in $\pi^I$. Hence $\pi^K=\pi^{K'}=\{0\}$.
\end{proof}

\subsection{Proof of Theorem 0.4} This follows at once from Theorem 0.3,
using Lemma 11.1. 

In the case of $G=\U(3)$ where $w$ stays prime in $E$, let $\wt{\pi}$ be 
the automorphic representation we get from Theorem 0.3. By Lemma 11.1 and 
[Bo] or [B], 
$\wt{\pi}_w$ is a ramified constituent of a reducible unramified induced
representation. The constituents of the reducible unramified induced
representations are the nontempered one-dimensional and $\pi^\times$,
which are unramified, and the square integrable Steinberg and $\pi^+$.
See 11.3 below. But $\pi^{+,K'}\not=0$, hence $\wt{\pi}_w$ is Steinberg.

Finally, $[K_w:I_w]=q_w^3+1$, since $K_w$ is the fixer of a 
hyperspecial vertex $v$ in the Bruhat-Tits building, which has 
$q_w^3+1$ neighbors, and $I_w$ is the fixer of an edge $vv'$.
Thus $[K_w:I_w]$ counts the number of edges initiating from the
vertex $v$. Similarly $[K'_w:I_w]=q_w+1$ as $K'_w$ is the fixer 
of the special nonhyperspecial vertex $v'$, which has $q_w+1$
neighbors. As $q_w+1$ divides $q_w^3+1$, $[K'_w:I_w]_{K_w}=1$.
\hfill $\square$

\subsection{Reducibility of unramified representations}
Let $G$ be an unramified (split, or quasisplit and split over an
unramified extension $E$) reductive group over a $p$-adic field $F$.
An irreducible representation of $G$ has a nonzero vector fixed by
an Iwahori subgroup iff it is a constituent of a representation induced
from an unramified character of a minimal parabolic subgroup ([Bo] or
[B]). This induced representation is parametrized by the conjugacy 
class of a semisimple element $s$ in the connected dual group $\widehat{G}$
if $G$ is split, and in $\widehat{G}\sigma$ if $G$ is quasisplit and
splits over an unramified extension $E/F$, which we take to be minimal,
and denote by $\sigma$ a generator of the cyclic group $\Gal(E/F)$.
Reducibility occurs precisely when there is a unipotent $u\not=1$ in
$\widehat{G}$ with $sus^{-1}=u^q$, where $q$ is the residual cardinality
of $F$ (see e.g. [L]). 

In the quasisplit case, if $s=s'\sigma$, the relation becomes 
$s'\sigma(u)s'{}^{-1}=u^q$. If $G=\U(3,E/F)$, $E/F$ unramified
quadratic extension, thus the residual cardinality of $E$ is $q^2$, 
the representation $I(\eta)$ induced from the unramified character 
$\eta:t^n\mapsto a^n$, $t=\diag(\upi^{-1},1,\upi)$ ($\upi$ is a uniformizer
in $F^\times$) is parametrized by $s=s'\sigma$ with $s'=\diag(a,1,1)$ 
(which is in $\SL(3,\C)$ up to a scalar multiple; our representation has 
trivial central character so it can be viewed as one on the adjoint form 
of the group). Writing $u=[x,y,z]$ for the upper triangular unipotent 
matrix with top row $(1,x,y)$ and middle row $(0,1,z)$, we check that 
$\sigma(u)=[z,xz-y,x]$, $s'\sigma(u)s'{}^{-1}=[az,axz-ay,x]$, 
$u^q=[qx,qy+q(q-1)xz/2,qz]$. Suppose $s'\sigma(u)s'{}^{-1}=u^q$ and 
$u\not=1$. If $z\not=0$, then $x=qz$, $a=q^2$, $y=qz^2/2$. If $z=0$ 
then $x=0$ and $-ya=qy$ implies $a=-q$. Thus reducibility occurs in two cases:

\n (1) $a=q^2$, the constituents are the nontempered trivial representation
tr and the square integrable St;

\n (2) $a=-q$, the constituents are the nontempered representation which 
we denote by $\pi^\times$ and the square integrable $\pi^+$.

Put $r=\antidiag(1,-1,1)$ and $r'=rt$ for the
reflections in $G$ with $K=I\cup IrI$ and $K'=I\cup Ir'I$. The Iwahori 
algebra $H_I$ (of compactly supported $I$-biinvariant $\C$-valued
functions on $G$) is generated over $\C$ by the characteristic functions
$T$ of $IrI$ and $T'$ of $Ir'I$, subject to the relations $(T+1)(T-q^3)=0$
and $(T'+1)(T'-q)=0$; see, e.g., [Bo], 3.2(2). The characteristic functions 
of $K$ and $K'$ are $T_K=1+T$ and $T_{K'}=1+T'$. The functor $V\mapsto V^I$ 
is an equivalence from the category of representations of $G$ with a nonzero 
$I$-invariant vector to the category of $H_I$-modules. On the two dimensional
$H_I$-module $I(\eta)^I$ the element $TT'$ acts as $\delta^{1/2}(t)
\diag(\eta(\upi),\eta(\upi^{-1}))$ for some basis, where
$\delta(t)=|\det[\Ad(t)|\Lie N]|=q^4$, but $T$, $T'$ are not diagonalizable
with respect to a basis which diagonalizes $TT'$. When $I(\eta)$ is
reducible, the constituents correspond to one dimensional representations
of $H_I$. The possible images of $T$ are $-1$ and $q^3$, of $T'$ are
$-1$ and $q$. Thus on the trivial representation $(T,T')\mapsto(q^3,q)$,
and on the Steinberg $(T,T')\mapsto(-1,-1)$, so $TT'$ acts on the
corresponding induced $I(\eta)$ with eigenvalues $(q^4,1)=q^2(q^2,q^{-2})$,
and the induced is $I(\eta)$ with $\eta(t)=a$ equals $q^2$. On $\pi^\times$: 
$(T,T')\mapsto(q^3,-1)$, on $\pi^+$: $(T,T')\mapsto(-1,q)$, so $TT'$ has
eigenvalues $(-q^3,-q)=q^2(-q,-q^{-1})$ and the induced is $I(\eta)$ with
$\eta(t)=-q$. Now the eigenvalues of $(T_K,T_{K'})=(1+T,1+T')$ are on tr: 
$(1+q^3,1+q)$, on St: $(0,0)$, on $\pi^\times$: $(1+q^3,0)$, on $\pi^+$: 
$(0,1+q)$. We conclude that the trivial representation has both (nonzero) 
$K$ and $K'$-fixed vectors, the Steinberg has none, $\pi^\times$ has a 
$K$-fixed vector but no $K'$-fixed vector, and $\pi^+$ has a $K'$-fixed vector
but no $K$-fixed vector, thus $\pi^\times{}^K\not=0=\pi^\times{}^{K'}$ 
and $\pi^+{}^K=0\not=\pi^+{}^{K'}$. Clearly each $I(\eta)$ has both $K$ and
$K'$-fixed vectors.

\subsection{$\U(3)$ - the Split Case}
Let $E/F$ denote a totally imaginary quadratic extension $E$ of a totally
real number field $F$. Consider the quasi-split unitary $F$-group
$G^{\qs}=\U(2,1)$ in $3$ variables, split over $E$. Let $G=\U(3)$
be an arbitrary inner form of $G^{\qs}$ such that $G_{\infty}$ is
compact. Such exist since $E$ is CM. The rank is odd, so we
may even assume $G$ is quasi-split at all finite primes, but we do
not need that here. Let $\q$ be a prime of $F$ split in $E$. 
Denote by $R_\q$ the ring of integers in the completion $F_\q$ of $F$ 
at $\q$, and by $q=q_w$ the (residual) cardinality, of $\F_\q=R_\q/\q$.
Let $\q_E$ be a prime of $E$ over $\q$. Let us list the parahoric 
subgroups of $\GL(3,E_{\q_E}) \simeq \GL(3,F_\q)$. There is the 
hyperspecial maximal compact subgroup $K_\q=\GL(3,R_\q)$, 
and the Iwahori subgroup
$$
I_\q=\left\{g \in K_w: g \equiv \begin{pmatrix} * & * & * \\ 0 & * &
* \\ 0 & 0 & * \end{pmatrix}(\md \q)\right\}.
$$
There is only one $\GL(3,F_\q)$-conjugacy class of maximal proper
parahorics. Denote by $\upi_\q$ a generator of the maximal ideal 
$\q$ in the ring $R_\q$ of integers in $F_\q$. Put
$\mu_w=\diag(\upi_\q,\upi_\q,1)$. Then 
$$J_\q=\left\{g \in K_w: g \equiv \begin{pmatrix} * & * & * \\ *&* &
* \\ 0 & 0 & * \end{pmatrix}(\md\q)\right\}=K_\q \cap \mu_w^{-1}K_\q\mu_w. $$
is a representative. 

\subsection{Proof of Theorem 0.5} 
We first need to classify all the Iwahori-spherical
representations of $\GL(3,F_\q)$. It is a theorem of Borel [Bo] and
Bernstein [B] that these are precisely the constituents of the unramified
principal series. Let $\nu=|\cdot|$ be the absolute value character on 
$F_\q$. Using the theory of Bernstein and Zelevinsky [BZ] we obtain 
the following table.


\begin{tabular}{|c|c|c|c|c|c|}
\hline
   &  & constituent of & representation & unitary & tempered\\
 \hline \hline
  I &  & $\chi_1 \times \chi_2 \times \chi_3$ & $\chi_1 \times \chi_2 \times \chi_3$
  & below & $|\chi_i|=1$\\
\hline
 II & a & $\chi_1\nu^{1/2}\times\chi_1\nu^{-1/2}\times \chi_2$
  & $\chi_1\St_{\GL(2)}\times \chi_2$ & $|\chi_i|=1$  & $|\chi_i|=1$ \\
\cline{4-6}
   & b & $\chi_1\chi_2^{-1}\neq \nu^{\pm 3/2}$ & $\chi_1\one_{\GL(2)}\times \chi_2$
   & $|\chi_i|=1$  &  \\
 \hline
  III & a & $\chi\nu \times \chi \times \chi\nu^{-1}$ & $\chi\St_{\GL(3)}$
  & $|\chi|=1$  & $|\chi|=1$ \\
\cline{4-6}
  & b &  & $\chi V_P$ &  &\\
 \cline{4-6}
 & c &  & $\chi V_Q$ &  & \\
 \cline{4-6}
 & d &  & $\chi \one_{\GL(3)}$ & $|\chi|=1$ & \\
 \hline
\end{tabular}

{\center{Table A: Iwahori-spherical representations of GL(3)}\endcenter}

Only the representations of types I, IIa, IIIa are generic, and a representation
in Table A is square integrable iff it is of type IIIa and $|\chi|=1$.

The irreducible representation $\chi_1 \times \chi_2 \times\chi_2$ 
in case I is unitary if and only if either all the $\chi_i$ are unitary, or
$\chi_1\chi_2^{-1}=\nu^{\alpha}$ with $0<\alpha<1$ and $\chi_3$ 
unitary (after a permutation). In the table, $P$ and $Q$ denote the 
parabolics of $G=\GL(3,F_\q)$ of type $(2,1)$ and $(1,2)$ respectively. 
Moreover, $V_P=C^{\infty}(P\backslash G)/\C$ and 
$V_Q=C^{\infty}(Q\backslash G)/\C$. They are not
unitary, and therefore irrelevant for the theory of automorphic
forms. Next, we list the dimensions of their parahoric fixed spaces:

\begin{tabular}{|c|c|c|c|c|c|c|}
\hline
   &   & representation & remarks & $K$ & $J$ & $I$ \\
 \hline \hline
  I &   & $\chi_1 \times \chi_2 \times \chi_3$
  &  & 1 &  3 & 6  \\
\hline
  II & a
  & $\chi_1\St_{\GL(2)}\times \chi_2$ &   & 0 & 1 & 3  \\
\cline{3-7}
   & b & $\chi_1\one_{\GL(2)}\times \chi_2$
   &   &  1 &  2 & 3  \\
 \hline
  III & a & $\chi\St_{\GL(3)}$
  &   & 0 & 0 & 1  \\
\cline{3-7}
  & b & $\chi V_P$ & not unitary & 0 & 1 & 2 \\
 \cline{3-7}
 & c & $\chi V_Q$ & not unitary & 0 & 1 & 2 \\
 \cline{3-7}
 & d & $\chi\one_{\GL(3)}$  & irrelevant & 1 & 1 & 1 \\
 \hline
\end{tabular}

{\center{Table B: Dimensions of the parahoric fixed spaces}\endcenter}

To compute these dimensions, we use the following observation: If
$P$ is parabolic and $J$ is parahoric, a choice of representatives
$g \in P \backslash G / J$ determines an isomorphism
$$
\Ind_P^G(\tau)^J \simeq {\bigoplus}_{g \in P \backslash G / J}\,\,\,
\tau^{P \cap gJg^{-1}},
$$
for every representation $\tau$ of a Levi factor $M_P$. In
particular, if $P=B$ is the Borel subgroup and $\tau$ is an
unramified character, the dimension of $\Ind_B^G(\tau)^J$ equals
the number of double cosets $|B \backslash G / J|$. With this
information, the proof proceeds as follows. Theorem 0.3 gives
an automorphic representation $\wt{\pi}$ congruent to 
$\pi$ (modulo $\lambda$) such that $\wt{\pi}_\q^{J_\q} \neq
\wt{\pi}_\q^{K_\q}+\wt{\pi}_\q^{K_\q'}$. Since $\wt{\pi}_\q$
must be unitary, we see from table B that it is of type I or IIa.
Then, from table A, we derive that $\wt{\pi}_\q$ is generic and
not $L^2$. Finally, note that there is a bijection $K/J \simeq
\GL(3,\F_\q)/\bar{P}\simeq\mathbb P_2(\F_{q_w})$, whose cardinality
is $(q_w^3-1)/(q_w-1)$, so $[K:J]=1+q_w+q_w^2$, $q_w=|R_\q/\q|$,
but all maximal compact subgroups of $\GL(3,F_w)$ are conjugate,
so $[K'_w:J_w]_{K_w}=1$.
\hfill $\square$

\section{Applications for $\GSp(2)$}

In this section we view the symplectic group GSp(2) of rank two 
as an algebraic $F$-subgroup of GL(4) by realizing it with respect to 
the standard skew-diagonal symplectic form. With this choice, the set 
of upper triangular matrices form a Borel subgroup $B=TU$. There 
are two maximal parabolic subgroups containing $B$. One is the 
Siegel parabolic
$$
P=M_P \ltimes N_P=\left\{\begin{pmatrix} g & \\ & \nu\cdot {}^{\tau}\!g^{-1}
\end{pmatrix}
\begin{pmatrix}1 & & r & s \\ & 1 & t & r \\ & & 1 & \\ & & & 1
\end{pmatrix}\right\},
$$
where ${}^{\tau}\!g$ denotes the skew-transpose. The other is the 
Heisenberg parabolic
$$
Q=M_Q \ltimes N_Q=\left\{\begin{pmatrix} \nu & &   \\ & g &   \\ & &
\nu^{-1}\cdot\det g\end{pmatrix}
\begin{pmatrix} 1& c& &  \\ &1 &  & \\ &  & 1&-c \\  & & & 1
\end{pmatrix}
\begin{pmatrix} 1& & r& s \\ &1 &  & r\\ &  & 1& \\  & & & 1
\end{pmatrix}  \right\}.
$$

We consider an inner form $G$ of GSp(2) such that
$G^{\der}(\R)$ is compact. Concretely we have 
$G=\GSpin(f)$, where $f$ is some definite quadratic form in $5$ 
variables over $F$. Let us first describe the parahoric subgroups of
$\GSp(2,F_\q)$. There is the hyperspecial maximal compact subgroup
$K_\q=\GSp(2,R_\q)$, and the Iwahori subgroup $I_\q$ consisting 
of elements in $K_\q$ with upper triangular reduction mod $\q$.
Similarly, $P$ and $Q$ define (non-conjugate) parahoric subgroups
$J_\q'$ and $J_\q$ called the Siegel parahoric and the Heisenberg
parahoric respectively. One can easily check that we have the
identity,
$$
\text{$J_\q'=K_\q \cap h_wK_\q h_w^{-1},\quad$ where 
$h_w=\begin{pmatrix} & I \\  \upi_\q I & \end{pmatrix}$, 
$I=\begin{pmatrix} 1&\\& 1\end{pmatrix}$.}
$$
However, $J_\q=K_\q \cap K_\q'$, where $K_\q'$ is the non-special
{\it paramodular} (see [Sch], p. 267) maximal compact subgroup 
containing $I_\q$. Since $P$ and $Q$ are not 
associated parabolics, the classification of the Iwahori-spherical
representations of $\GSp(2,F_\q)$ is more complicated than
for $\GL(3,F_\q)$. This is reproduced in Appendix 2 as Table C
and Table D from Table 1 and Table 3 of R. Schmidt [Sch]. We 
use the notation from Appendix 2. 

\subsection{Proof of Theorem 0.6} We apply Theorem 0.3 
to the Heisenberg parahoric $J_\q$. An easy computation 
shows that $[K'_\q:J_\q]=q_w$ and $[K_\q:J_\q]=(q_w^4-1)/(q_w-1)$,
hence $[K'_\q:J_\q]_{K_w}=q_w$. We get an automorphic representation 
$\wt{\pi}$, congruent to $\pi$ modulo $\lambda$, 
such that the component at $\q$ satisfies the identity:
$$\wt{\pi}_\q^{J_\q}\neq \wt{\pi}_\q^{K_\q}+\wt{\pi}_\q^{K_\q'}.$$
In particular, $\wt{\pi}_\q^{J_\q}\neq 0$. We must have that
$\wt{\pi}_\q^{K_\q} \cap \wt{\pi}_\q^{K_\q'}=0$, for otherwise
$\dim \wt{\pi}_\q=1$ and therefore $\wt{\pi}$ is
one-dimensional by the strong approximation theorem. However,
$\pi$ is assumed to be non-abelian modulo $\lambda$. Thus,
equivalently we have
$$
\dim \wt{\pi}_\q^{J_\q} > \dim \wt{\pi}_\q^{K_\q}+\dim
\wt{\pi}_\q^{K_\q'}.
$$
From table 3 of [Sch, p. 269] (copied as Table D in Appendix 2), 
we deduce that this inequality is satisfied precisely when
$\wt{\pi}_\q$ is of type I, IIa, IIIa, IVb, IVc, Va or VIa.
However, the representations of type IVb and IVc are not unitary
and can therefore be ruled out immediately. We are then left with
the possible types I, IIa, IIIa, Va and VIa. Then from table 1 of
[Sch, p. 264] (copied as Table C in Appendix 2), we read off that
$\wt{\pi}_\q$ is generic. Indeed all the representations of type
Xa are generic, for X arbitrary.

Let us show that the types Va and VIa can also be ruled out
if we assume $q^4 \neq 1$ (mod $\lambda$). Suppose first that
$\wt{\pi}_\q$ is of type Va, that is, the unique
subrepresentation of some $|\cdot|\xi_0 \times \xi_0 \rtimes
|\cdot|^{-1/2}\sigma$ where $\xi_0$ has order two, in the notations
of Sally-Tadic [ST].  By the main theorem, the
center of the Heisenberg-Hecke algebra $Z(\HH_{J_\q,\Z})$ acts
on $\wt{\pi}_\q^{J_\q}$ by a character
$\eta_{\wt{\pi}_\q^{J_\q}}$ satisfying the congruence
$$
\text{$\eta_{\wt{\pi}_\q^{J_\q}}(\phi) \equiv
\eta_{\pi_\q^{K_\q}}(e_{K_\q}\ast \phi) \equiv
\eta_{\one}(e_{K_\q}\ast \phi)$ (mod $\lambda$)},
$$
for all $\phi \in Z(\HH_{J_\q,\Z})$. We get immediately
that the analogous statement is also true for the center of the
Iwahori-Hecke algebra $Z(\HH_{I_\q,\Z})$. This, however,
acts by a character on the Iwahori-fixed vectors in the principal
series $|\cdot|\xi_0 \times \xi_0 \rtimes |\cdot|^{-1/2}\sigma$
(for it has an unramified Langlands quotient, so it is generated by
any nonzero $K_\q$-fixed vector). Hence $Z(\HH_{I_\q,\Z})$
acts on every constituent of this principal series by the same
character $\eta_{\wt{\pi}_\q^{I_\q}}$. In particular, the action
of the spherical Hecke algebra $\HH_{K_\q,\Z} \simeq
Z(\HH_{I_\q,\Z})$ on the $K_\q$-fixed vectors of the
unramified quotient (type Vd) is given by a character congruent to
$\eta_{\one}$. In terms of their Satake parameters we
therefore must have (modulo the action of the Weyl group):
$$\diag(q_w^{-1/2}\sigma(\upi_\q),\,q_w^{-1/2}\xi_0\sigma(\upi_\q),
\,q_w^{1/2}\xi_0\sigma(\upi_\q),\,q_w^{1/2}\sigma(\upi_\q))$$
$$\equiv\diag(q_w^{-3/2},\,q_w^{-1/2},\,q_w^{1/2},\,q_w^{3/2})
(\text{mod}\,\lambda).$$
Since $\xi_0(\upi_\q)=-1$ we conclude that $q_w\equiv -1$ or 
$q_w^2 \equiv -1$ modulo $\lambda$. 

Secondly, assume $\wt{\pi}_\q$ is of type
VIa, that is, the unique irreducible subrepresentation of some
$|\cdot| \times \one \rtimes |\cdot|^{-1/2}\sigma$. Then, by
the argument above, we conclude that the unramified quotient of
this principal series must be congruent to $\one$. That is, in
terms of their Satake parameters:
$$\diag(q_w^{-1/2}\sigma(\upi_\q),\,q_w^{-1/2}\sigma(\upi_\q),\,q_w^{1/2}
\sigma(\upi_\q),\,q_w^{1/2}\sigma(\upi_\q))$$ 
$$\equiv\diag(q_w^{-3/2},\,q_w^{-1/2},\,q_w^{1/2},\,q_w^{3/2})(\text{mod}\,\lambda).$$
It follows that $q_w^2 \equiv 1$. The types I, IIa and IIIa cannot
be excluded, even if $\pi$ has trivial central character.\hfill $\square$

\section*{Appendix 1. Congruent Representations}

The compact open subgroups $K \subset G(\A^{\infty})$ form a
directed set by opposite inclusion, that is $K \preccurlyeq J
\Leftrightarrow K \supset J$. Let $R$ be a commutative ring. As
$K$ varies over the compact open subgroups, the centers
$Z(\HH_{K,R})$ form an inverse system of $R$-algebras with
respect to the canonical maps $Z(\HH_{K,R}) \leftarrow
Z(\HH_{J,R})$ when $K \supset J$. Let
$$
\ZZ_{G(\A^{\infty}),R}=\underleftarrow \lim
Z(\HH_{K,R}).
$$
In this limit, it is enough to let $K$ run through a neighborhood
basis at the identity. Thus $\ZZ_{G(\A^{\infty}),R}$ is a
commutative $R$-algebra, and it comes with projections ($K \supset J$)

\begin{align*}
\xymatrix{  & \ZZ_{G(\A^{\infty}),R} \ar[dl]_{\pr_K}
\ar[dr]^{\pr_J}
& \\
 Z(\HH_{K,R}) & &  Z(\HH_{J,R})
 \ar[ll]^{e_K \ast \phi \leftarrow \phi}}
\end{align*}

All we have said makes sense for any locally profinite group, so
in particular we have local analogues $\ZZ_{G_v,R}$ for
each finite place $v$. If $\mu=\otimes \mu_v$, it follows that
$$
\ZZ_{G(\A^{\infty}),R} \simeq 
\bigotimes_{v <\infty}\ZZ_{G_v,R},
$$
a restricted tensor product. Indeed the decomposable groups
$K=\prod K_v$ form a cofinal system. It remains to determine the
algebras $\ZZ_{G_v,R}$. By [B, 2.1], there exists a
neighborhood basis at $1$ consisting of compact open subgroups
$K_v \subset G_v$ with Iwahori factorization with respect to a
fixed minimal parabolic. If $G_v$ is unramified, for such a $K_v$
the canonical map $Z(\HH_{K_v,R})\to\HH_{v,R}^{\sph}$ to the 
spherical Hecke algebra at $v$ is an isomorphism [Bu]. This is a 
well-known result due to Bernstein when $K_v$ is an actual Iwahori 
subgroup. Therefore,
$$
\text{$G_v$ unramified $\Longrightarrow$ $\ZZ_{G_v,R}
\simeq \HH_{v,R}^{\sph}$.}
$$
The reason for introducing these objects is the following: Let
$\pi=\otimes \pi_v$ be an irreducible admissible representation of
$G(\A)$. Then there exists a unique character
$$
\eta_{\pi}:\ZZ_{G(\A^{\infty}),\Z} \to \C,
$$
such that $\eta_{\pi}=\eta_{\pi^K}\circ \pr_K$ for every $K$
such that $\pi^K \neq 0$. Uniqueness is clear, and the existence
reduces to showing that $\eta_{\pi^J}(\phi)=\eta_{\pi^K}(e_K \ast
\phi)$ for $K \supset J$ when $\pi^K \neq 0$. Similarly, we have
characters $\eta_{\pi_v}$ locally, and $\eta_{\pi}=\otimes
\eta_{\pi_v}$ under the isomorphism above. If $\pi$ is automorphic
and $\pi_{\infty}=\rho_\infty$, the character $\eta_{\pi}$ maps into
the ring of integers of some number field. Our work suggests the
following definition:

\begin{dfn}
Let $\pi$ and $\wt{\pi}$ be automorphic representations of
$G(\A)$, both $\rho_\infty$ at infinity. Let $\lambda$ be a finite
place of $\bar{\Q}$. We say that $\pi$ and $\wt{\pi}$ are
{\it congruent modulo} $\lambda$, and we write $\wt{\pi}\equiv \pi$
$(\md\lambda)$, if for all $\phi \in
\ZZ_{G(\A^{\infty}),\Z}$ we have
$$\eta_{\wt{\pi}}(\phi)\equiv \eta_{\pi}(\phi)\,\, (\md\lambda).$$
\end{dfn}

Analogously, it makes sense to say that the local components
$\wt{\pi}_v$ and $\pi_v$ are congruent. Then $\wt{\pi}\equiv
\pi$ (mod $\lambda$) if and only $\wt{\pi}_v\equiv \pi_v$ (mod
$\lambda$) for all $v < \infty$. 
Note also that if $\wt{\pi}_v$ and
$\pi_v$ are both unramified, then $\wt{\pi}_v\equiv \pi_v$ (mod
$\lambda$) means that the Satake parameters are congruent as it
should. With these definitions, our results translate into those
stated in the introduction.

\section*{Appendix 2. Iwahori-Spherical Representations of $\GSp(4)$}

In this appendix we reproduce parts of Table 1 and Table 3 in
[Sch]. The tables in [Sch] contain more information
than what is listed here (such as Atkin-Lehner eigenvalues and
signs of $\varepsilon$-factors). Below, we employ the notation of
[ST]. Thus $\nu$ denotes the normalized absolute value of a
non-archimedean local field. If $\chi_1$, $\chi_2$ and $\sigma$
are unramified characters, $\chi_1 \times \chi_2\rtimes \sigma$ 
denotes the principal series of $\GSp(2)$ obtained from
$$
T \ni \diag(x,y,zy^{-1},zx^{-1}) \mapsto
\chi_1(x)\chi_2(y)\sigma(z) \in \C^\times
$$
by normalized induction. Similarly, if $\pi$ is a representation
of $\GL(2)$, we denote by $\pi \rtimes \sigma$ and $\sigma \rtimes
\pi$ the representations of $\GSp(2)$ induced from
$\diag(X,z\cdot{}^{\tau}\!X^{-1}) \mapsto \pi(X)\sigma(z)$ and
$\diag(z,X,z^{-1}\det X) \mapsto \sigma(z)\pi(X)$ respectively. By
$L((-))$ we mean the unique irreducible quotient (the Langlands
quotient) when it exists. The representations
$\tau(S,\nu^{-1/2}\sigma)$ and $\tau(T,\nu^{-1/2}\sigma)$ are the
constituents of $\one\rtimes \sigma \St_{\GL(2)}$. 
They can be called limits of discrete series. The
nontrivial unramified quadratic character is denoted by $\xi_0$.

In the following Table C, a representation is generic iff it is of type
I or Xa, and $L^2$ iff it is of type IVa or Va.

In table D below, our notation is different from [Sch]: $K$ is hyperspecial, 
$K'$ is paramodular, $J$ is the Heisenberg parahoric, $J'$ the Siegel 
parahoric and $I$ is the Iwahori subgroup of GSp(4).

\begin{tabular}{|c|c|c|c|c|c|c|}
\hline
   &  & constituent of & representation &  tempered \\
 \hline \hline
  I &  & $\chi_1 \times \chi_2 \rtimes \sigma$
  & $\chi_1 \times \chi_2 \rtimes \sigma$  & $|\chi_i|=|\sigma|=1$  \\
\hline
  II & a & $\nu^{1/2}\chi \times \nu^{-1/2}\chi \rtimes \sigma$,
& $\chi\St_{\GL(2)}\rtimes \sigma$  &  $|\chi|=|\sigma|=1$ \\
\cline{4-5}
   & b & $\chi^2 \notin \{\nu^{\pm 1}, \nu^{\pm 3}\}$ & $\chi\one_{\GL(2)}\rtimes \sigma$
     &   \\
 \hline
  III & a & $\chi\times \nu \rtimes \nu^{-1/2}\sigma$, & $\chi \rtimes \sigma \St_{\GL(2)}$
   &  $|\chi|=|\sigma|=1$ \\
\cline{4-5}
  & b & $\chi \notin \{\one,\nu^{\pm 2}\}$ & $\chi \rtimes \sigma \one_{\GL(2)}$  &  \\
 \hline
 IV & a & $\nu^2 \times \nu \rtimes \nu^{-3/2}\sigma$ & $\sigma\St_{\GSp(4)}$  & $\bullet$  \\
 \cline{4-5}
 & b &  & $L((\nu^2,\nu^{-1}\sigma\St_{\GL(2)}))$   &   \\
\cline{4-5}
 & c &  & $L((\nu^{3/2}\St_{\GL(2)},\nu^{-3/2}\sigma))$  &  \\
\cline{4-5}
 & d &  & $\sigma\one_{\GSp(4)}$  &   \\
\hline
V & a & $\nu\xi_0 \times \xi_0 \rtimes \nu^{-1/2}\sigma$, & $\delta([\xi_0,\nu\xi_0],\nu^{-1/2}\sigma)$  & $\bullet$ \\
 \cline{4-5}
 & b & $\xi_0^2=\one$, $\xi_0 \neq \one$ & $L((\nu^{1/2}\xi_0\St_{\GL(2)},\nu^{-1/2}\sigma))$  &   \\
\cline{4-5}
 & c &  & $L((\nu^{1/2}\xi_0\St_{\GL(2)},\xi_0\nu^{-1/2}\sigma))$  &   \\
\cline{4-5}
 & d &  & $L((\nu\xi_0,\xi_0 \rtimes \nu^{-1/2}\sigma))$  &   \\
\hline
VI & a & $\nu \times \one \rtimes \nu^{-1/2}\sigma$ & $\tau(S,\nu^{-1/2}\sigma)$  & $\bullet$ \\
 \cline{4-5}
 & b &  & $\tau(T,\nu^{-1/2}\sigma)$  & $\bullet$ \\
\cline{4-5}
 & c &  & $L((\nu^{1/2}\St_{\GL(2)},\nu^{-1/2}\sigma))$  & \\
\cline{4-5}
 & d &  & $L((\nu, \one \rtimes \nu^{-1/2}\sigma ))$  &   \\
\hline

\end{tabular}

{\center{Table C: Iwahori-spherical representations of GSp(4)}\endcenter}

\begin{tabular}{|c|c|c|c|c|c|c|c|c|}
\hline
   &   & representation & remarks & $K$ & $K'$ & $J$ & $J'$ & $I$ \\
 \hline \hline
  I &   &
   $\chi_1 \times \chi_2 \rtimes \sigma$&  & 1 &  2 & 4 & 4 & 8\\
\hline
  II & a
  & $\chi\St_{\GL(2)}\rtimes \sigma$ &   & 0 & 1 & 2 & 1& 4\\
\cline{3-9}
   & b & $\chi\one_{\GL(2)}\rtimes \sigma$
   &   & 1  &  1 & 2 & 3& 4\\
 \hline
  III & a & $\chi \rtimes \sigma \St_{\GL(2)}$
  &   &  0 & 0 & 1 & 2& 4\\
\cline{3-9}
  & b & $\chi \rtimes \sigma \one_{\GL(2)}$ &  &  1& 2 & 3 & 2& 4\\
 \hline
IV & a & $\sigma\St_{\GSp(4)}$ &  & 0 &  0& 0 &0 & 1\\
 \cline{3-9}
 & b &  $L((\nu^2,\nu^{-1}\sigma\St_{\GL(2)}))$ & not unitary & 0 &  0& 1 & 2& 3\\
 \cline{3-9}
 & c & $L((\nu^{3/2}\St_{\GL(2)},\nu^{-3/2}\sigma))$  & not unitary & 0 & 1 &  2&1 &3 \\
 \cline{3-9}
 & d & $\sigma\one_{\GSp(4)}$  & irrelevant & 1 & 1 & 1 & 1& 1\\
\hline
V & a & $\delta([\xi_0,\nu\xi_0],\nu^{-1/2}\sigma)$ &  & 0 & 0 & 1 &0 & 2\\
 \cline{3-9}
 & b & $L((\nu^{1/2}\xi_0\St_{\GL(2)},\nu^{-1/2}\sigma))$  &  & 0 & 1 & 1 &1 & 2 \\
 \cline{3-9}
 & c &  $L((\nu^{1/2}\xi_0\St_{\GL(2)},\xi_0\nu^{-1/2}\sigma))$ &  & 0 & 1 & 1 &1 & 2\\
 \cline{3-9}
 & d & $L((\nu\xi_0,\xi_0 \rtimes \nu^{-1/2}\sigma))$  &  & 1 & 0 & 1 & 2& 2\\
\hline
VI & a & $\tau(S,\nu^{-1/2}\sigma)$ &  & 0 & 0 &1  & 1& 3\\
 \cline{3-9}
 & b & $\tau(T,\nu^{-1/2}\sigma)$  &  & 0 & 0 & 0 & 1& 1\\
 \cline{3-9}
 & c & $L((\nu^{1/2}\St_{\GL(2)},\nu^{-1/2}\sigma))$  &  & 0 & 1 & 1 & 0& 1\\
 \cline{3-9}
 & d & $L((\nu, \one \rtimes \nu^{-1/2}\sigma ))$  &  & 1 & 1 & 2 &2 & 3\\
\hline

\end{tabular}

{\center{Table D: Dimensions of the parahoric fixed spaces}\endcenter}

\end{document}